\documentclass[11 pt]{article}
\usepackage[margin=1in]{geometry}
\usepackage{hyperref}       
\usepackage{url}            
\usepackage{booktabs}       
\usepackage{amsfonts}       
\usepackage{nicefrac}       
\usepackage{microtype}      
\usepackage{amstext}
\usepackage{amsthm}
\usepackage{amssymb}
\usepackage{amsmath}
\usepackage{makecell}
\usepackage{caption}
\usepackage{subcaption}
\usepackage{graphicx}
\usepackage[export]{adjustbox}

\usepackage[usenames,dvipsnames]{xcolor}
\definecolor{purple}{rgb}{0.3,0.0,.4}
\newcommand{\mnote}[1]{\textcolor{purple}{\textbf{[MU: #1]}}}
\newcommand{\collaboratornote}[1]{\textcolor{dkgreen}{\textbf{[collaborator: #1]}}}
\newcommand{\lnote}[1]{\textbf{(LD: #1)}}
\renewcommand{\mnote}[1]{\textcolor{purple}{}}
\renewcommand{\lnote}[1]{\textbf{}}
\renewcommand{\collaboratornote}[1]{\textcolor{dkgreen}{}}
\newcommand{\newcontent}[1]{\textcolor{blue}{#1}}
\renewcommand{\newcontent}[1]{\textcolor{black}{#1}}
\usepackage{multirow}

\usepackage{algorithm}
\usepackage{algorithmic}
\newtheorem{thm}{\protect\theoremname}
\newtheorem{defn}[thm]{\protect\definitionname}
\newtheorem{lem}[thm]{\protect\lemmaname}
\newtheorem*{lem*}{\protect\lemmaname}
\theoremstyle{definition}
\newtheorem{rem}[thm]{\protect\remarkname}

\usepackage{enumitem}



\usepackage[english]{babel}
\providecommand{\definitionname}{Definition}
\providecommand{\lemmaname}{Lemma}
\providecommand{\remarkname}{Remark}
\providecommand{\theoremname}{Theorem}
\newtheorem{exmp}{Example}[section]
\title{$k$FW: A Frank-Wolfe style algorithm \\
	with stronger subproblem oracles}

%

\author{Lijun Ding
		, Jicong Fan
		, and Madeleine Udell\footnote{L. Ding is with the Department of Mathematics, Unviersity of Washington, Seattle, WA 98195, USA. E-mail: ljding@uw.edu.
		{Jicong Fan is with the School of Data Science, The Chinese University of Hong Kong, Shenzhen, China. E-mail: fanjicong@cuhk.edu.cn.}
		M. Udell is with the School of Operations Research and Information Engineering, Cornell University,
		Ithaca, NY 14850, USA. E-mail: udell@cornell.edu.
		}}
\DeclareMathOperator*{\argmin}{arg\,min}

\global\long\def\fronorm#1{\|#1\|_{\textup{F}}}

\global\long\def\twonorm#1{\|#1\|_{2}}

\global\long\def\norm#1{\|#1\|}

\global\long\def\opnorm#1{\|#1\|_{\textup{op}}}

\global\long\def\nucnorm#1{\|#1\|_{\textup{nuc}}}

\global\long\def\real{\mathbb{R}}

\global \long \def \EE{\mathbf{E}}

\global \long \def \FF{\mathbf{F}}

\global\long\def\inprod#1#2{\langle#1,#2\rangle}

\global\long\def\symMat{\mathbb{S}}

\global\long\def\dm{n}

\global\long\def\tr{\mathbf{tr}}

\global\long\def\Amap{\mathcal{A}}

\global\long\def\xsol{x_{\star}}
\global\long\def\Xsol{X_{\star}}

\global\long\def\rsol{r_{\star}}

\global\long\def\zsol{Z_{\star}}

\global\long\def\ssol{s_{\star}}

\global\long\def\onevec{\mathbf{1}}

\global\long\def\diag{\mbox{diag}}

\global\long\def\range{\mbox{range}}

\global\long\def\dist{\mbox{dist}}

\global\long\def\nullspace{\mbox{nullspace}}

\global\long\def\rank{\mbox{rank}}

\global\long\def\proj{\mathcal{P}}

\global\long\def\relint{\mbox{relint}}

\global\long\def\EV{\mathbf{EV}}

\global \long \def\conv{\mathbf{conv}}

\global \long \def\bigO{\mathcal{O}}

\newcommand{\myparagraph}[1]{\paragraph{#1.}}
\begin{document}
	\maketitle
	\vspace{-15pt}
	\begin{abstract}
		This paper proposes a new variant of Frank-Wolfe (FW),
		called $k$FW. Standard FW suffers from slow convergence:
		iterates often zig-zag as update directions
		oscillate around extreme points of the constraint set.
		The new variant, $k$FW, overcomes this problem by
		using two stronger subproblem oracles in each iteration.
		The first is a $k$ linear optimization oracle ($k$LOO) that computes
		the $k$ best update directions (rather than just one).
		The second is a $k$ direction search ($k$DS) that minimizes
		the objective over a constraint set represented by the $k$ best update directions
		and the previous iterate.
		When the problem solution admits a sparse representation,
		both oracles are easy to compute,
		and $k$FW converges quickly for smooth convex objectives
		and several interesting constraint sets:
		$k$FW achieves finite $\frac{4L_f^3D^4}{\gamma\delta^2}$ convergence on polytopes and group norm balls,
		and linear convergence on spectrahedra and nuclear norm balls.
		Numerical experiments validate the effectiveness of $k$FW
		and demonstrate an order-of-magnitude speedup over existing approaches.
	\end{abstract}

	\section{Introduction}
	We consider the following optimization problem with decision variable $x$:
	\begin{equation}\label{opt: mainProblem}
	\begin{array}{ll}
	\mbox{minimize} & f(x):\,=g(\Amap x) +\inprod{c}{x} \\
	\mbox{subject to} & x\in\Omega.
	\end{array}
	\end{equation}
	The constraint set $\Omega\subseteq \EE$ is a convex and compact subset of
	a finite dimensional Euclidean space $\EE$
	and has diameter $D$\footnote{The diameter of $\Omega$ is defined as
		$\sup_{x,y\in \Omega}\norm{x-y}$, where $\norm{\cdot}$ is the norm inducded by the inner product.}.
	The map $\Amap: \EE\rightarrow \FF$ is linear, where
	$\FF$ is another finite dimensional Euclidean space.
	We equip both spaces $\EE$ and $\FF$ with real inner products denoted as $\inprod{\cdot}{\cdot}$.
	The vector $c$ is in $\EE$.
	The function $g: \FF \rightarrow \real$ is convex  and $L_g$-smooth\footnote{
		That is, the gradient of $g$ is $L_g$-Lipschitz continuous
		with respect to the norm $\norm{\cdot}$.}.
	The smoothness of $g$ implies that $f$ is $L_f$-smooth for some $L_f>0$.
	For ease of exposition,
	we assume Problem \eqref{opt: mainProblem} admits a unique solution.\footnote{
		The main results Theorem \ref{thm: normalFWresult}, \ref{thm: nonexponentialFiniteConvergence} and \ref{thm: LinearConvergence} remain valid
		for multiple optimal solution setting after minor adjustments, see Section \ref{sec: uniqueness}.
		Note that from \cite[Corollary 3.5]{drusvyatskiy2011generic}, the solution is indeed unique for almost all $c$.}

	\myparagraph{Applications}The optimization problem \eqref{opt: mainProblem} appears in
	a wide variety of applications,
	such as sparse vector recovery \cite{chen2001atomic},
	group-sparse vector recovery \cite{yuan2006model},
	combinatorial problems \cite{joulin2014efficient},
	submodular optimization \cite{bach2013learning,zhou2018limited},
	and low rank matrix recovery problems \cite{recht2010guaranteed,jaggi2010simple,yurtsever2017sketchy,ding2018frankwolfe}.

	\myparagraph{Frank-Wolfe and two subproblems}
	In many modern high-dimensional applications,
	Euclidean projection onto the set $\Omega$ is challenging.
	Hence the well-known projected gradient (PG) method and its acceleration version (APG)
	are not well suited for \eqref{opt: mainProblem}.
	Instead, researchers have turned to projection-free methods,
	such as the Frank-Wolfe algorithm (FW) \cite{frank1956algorithm},
	also known as the conditional gradient method \cite[Section 6]{levitin1966constrained}.
	As stated in Algorithm \ref{alg: Frank_wolfe}, FW operates in two computational steps:
	\begin{enumerate}
		\item \emph{Linear Optimization Oracle (LOO):}
		Find a direction $v_t$ that solves $\min_v \inprod{\nabla f(x_t)}{v}$.
		\item \emph{Line Search:}
		Find $x_{t+1}$ that solves $\min_{x = \eta v_t+(1-\eta)x_t, \eta \in [0,1]}f(x)$.
	\end{enumerate}
	The linear optimization oracle can be computed efficiently for many interesting
	constraint sets $\Omega$ even when projection is prohibitively expensive.
	These sets include the probability simplex,
	the $\ell_1$ norm ball, 
	and many more polytopes arising from combinatorial optimization,
	the spectrahedron $\mathcal{SP}^\dm = \{X\in \symMat_+^\dm \mid \tr(X)=1\}$,
	and the unit nuclear norm ball $\mathbf{B}_{\nucnorm{\cdot}}=\{X\mid \nucnorm{X}\leq 1\}$.
	We refer the reader to \cite{jaggi2013revisiting,lacoste2015global} for further examples.
	Line search is easy to implement using a closed formula for quadratic $f$, or bisection in general.

	\myparagraph{Slow convergence of FW and the Zigzag}
	However, FW is known to be slow in both theory and practice,
	reaching an accuracy of $\bigO(\frac{1}{t})$ after $t$ iterations.
	This slow convergence is often described pictorially
	by the \emph{Zigzag phenomenon}
	depicted in Figure \ref{fig:figurezigzag}.
	The Zigzag phenomenon occurs when the optimal solution $\xsol$ of \eqref{opt: mainProblem}
	lies on the boundary of $\Omega$ and is a convex combination
	of $\rsol$ many extreme points $v_1^\star ,\dots,v_{\rsol}^\star \in \Omega$,
	(In Figure \ref{fig:figurezigzag}, $\rsol =2$.)
	\begin{equation}\label{eq: solutionDecomposition}
	\begin{aligned}
	\xsol = \sum_{i=1}^{\rsol} \lambda_i^\star v_i^\star , \quad \lambda_i^\star  >0,\;\text{and}\; \sum_{i=1}^{\rsol} \lambda_i^\star  =1.
	\end{aligned}
	\end{equation}
	When $\Omega$ is a polytope,
	the LOO will alternate
	between the extreme points $v_i^\star $s and
	the line search updates the estimate of $\lambda_i^\star $ slowly as the iterate approaches to $\xsol$.
	A similar Zigzag occurs for other sets such as the spectrahedron and nuclear norm ball.
	A long line of work has explored methods to reduce the complexity of FW using LOO and line search alone
	\cite{guelat1986some,lacoste2013affine,lacoste2015global,garber2015faster,garber2016linear,freund2017extended}.

	\myparagraph{Our key insight: overcoming zigzags with $k$FW}
	Our first observation is
	that the sparsity $\rsol$ is expected to be small
	for most large scale applications mentioned.
	For example,
	the sparsity is the number of nonzeros in sparse vector recovery,
	the number of nonzero groups in group-sparse vector recovery,
	and the rank in low rank matrix recovery.
	Next, note that from the optimality condition (also see Figure \ref{fig:figureconvhull}),
	the gradient $\nabla f(\xsol)$ in this case has the smallest inner product with
	$v_1^\star, \ldots, v_{\rsol}^\star$ among all $v\in \Omega$.
	Also, for small $\rsol$, we can solve
	$\min_{x \in \conv(x_t,v_1^\star,\dots, v_{\rsol}^\star)}f(x)$ efficiently \footnote{Here
		$\conv(v_1^\star,\dots,v_{\rsol}^\star)$ is the convex hull of $v_1,\dots,v_{\rsol}$. }
	to obtain the solution $\xsol$.
	Hence, our key insight to overcome the Zigzag is simply
	\begin{center}
		\textit{Compute all extreme points $v_i^\star$ that minimize $\inprod{\nabla f(\xsol)}{v}$ \\
			and solve the smaller problem $\min_{x \in \conv(x_t,v_1^\star,\dots, v_{\rsol}^\star)}f(x)$.}
	\end{center}
	\begin{figure}[H]
		\begin{subfigure}[(a)]{.5\textwidth}
			\centering
			\includegraphics[width=1\linewidth, height= 0.2\textheight]{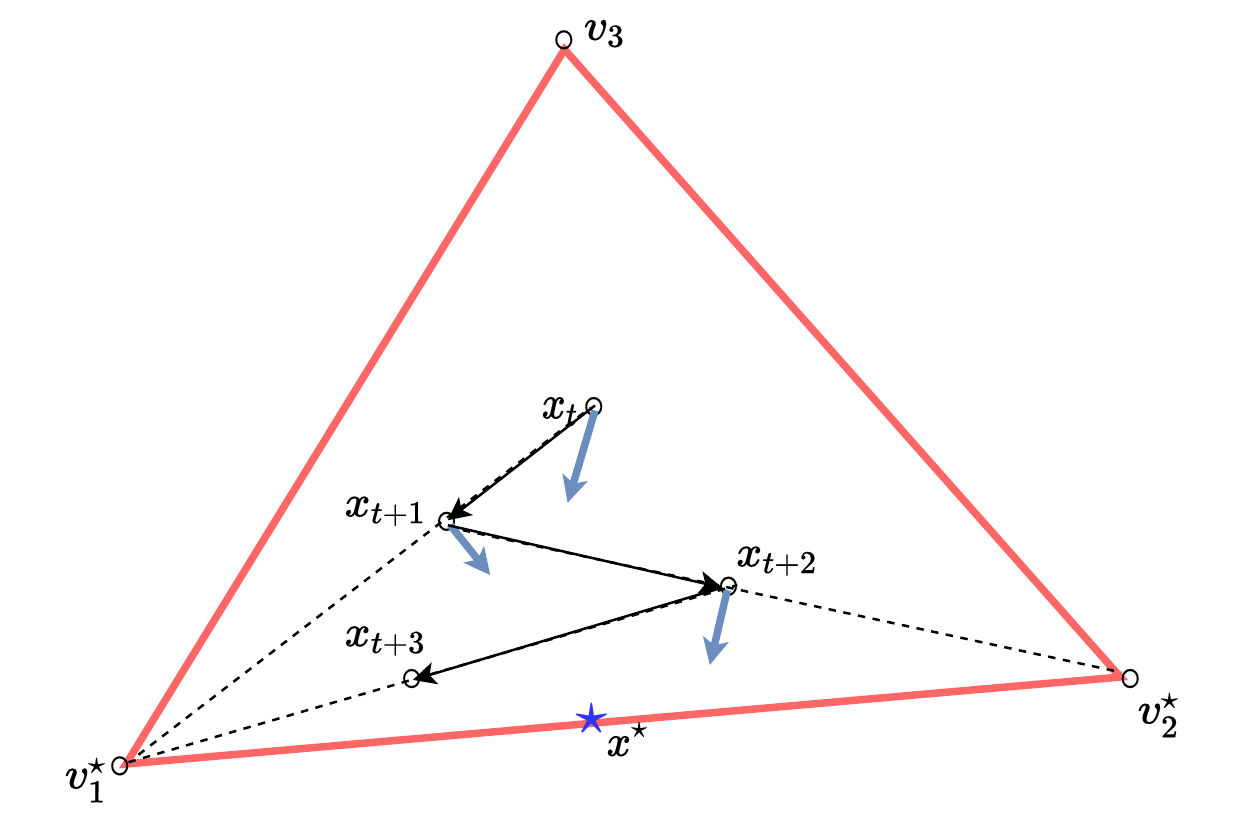}
			\caption{Zig-Zag: black arrows show trajectory of the iterates.}
			\label{fig:figurezigzag}
		\end{subfigure}%
		\hspace{5pt}
		\begin{subfigure}[(b)]{.5\textwidth}
			\centering
			\includegraphics[width=1\linewidth, height= 0.2\textheight]{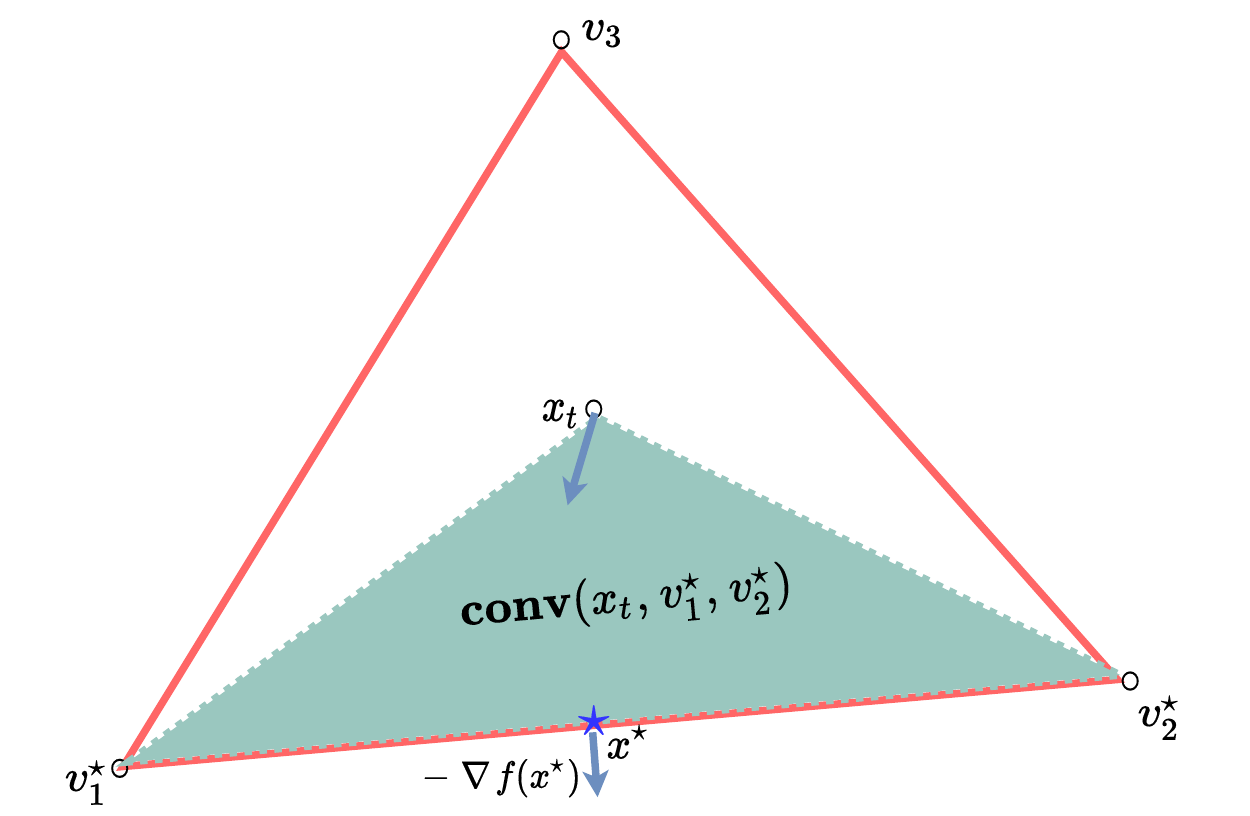}
			\caption{Optimization over $\conv(x_t,v_1^\star,v_2^\star)$ (green).}
			\label{fig:figureconvhull}
		\end{subfigure}
		\caption{The Zigzag phenomenon and optimization over $\conv(x_t,v_1^\star,v_2^\star)$.
			Here, the solution
			$\xsol$ is a convex combination of $v_1^\star$ and $v_2^\star$, and $\rsol=2$.
			The grey arrows are the negative gradients $-\nabla f$.}
	\end{figure}
	%
	%
	This insight leads us to define a new  algorithmic ways
	to choose extreme points and define a smaller convex search set,
	which we call $k$LOO and $k$DS.
	For polytope $\Omega$, they are defined
	as
	\begin{itemize}[noitemsep,topsep=0pt]
		\item $k$ linear optimization oracle ($k$LOO):
		for any $y\in \real^\dm$, compute the $k$ extreme points $v_1,\dots,v_k$ ($k$ best directions) with the smallest $k$ inner products $\inprod{v}{y}$ among
		all extreme points $v$ of $\Omega$.
		\item $k$ direction search ($k$DS):
		given input directions $w, v_1,\dots,v_k\in  \Omega$,
		output
		$x_{k\text{DS}} =\argmin_{x \in \conv(w,v_1,\dots, v_k)}f(x)$.
	\end{itemize}
	In connection with FW, $k$LOO and $k$DS can be considered as stronger
	subproblem oracles compared to LOO and line search respectively.

	Combining the two subproblem oracles,
	we arrive at a new variant of the Frank-Wolfe algorithm:
	$k$FW, presented in Algorithm \ref{alg k-FW, polytope}.
	In Section \ref{sec: subproblems},
	we show that the two subproblems can actually be efficiently solved over many polytopes (for small $k$).
	Moreover, we redefine $k$LOO and $k$DS to incorporate the situation where $k$ best extreme
	points are not well-defined for sets such as group norm ball, spectrahedron, and nuclear norm ball,
	yet sparsity structure still persists.
	Finally, we note that with our terminology, $1$FW is the same as FW.
	Hence our main results,
	Theorem \ref{thm: normalFWresult}, \ref{thm: nonexponentialFiniteConvergence}, and \ref{thm: LinearConvergence},
	give new insight into the
	fast convergence of FW when $\rsol=1$.

	\myparagraph{Computational efficiency of $k$FW}
	Here we summarize the computational efficiency of $k$FW in terms of its
	per iteration cost, iteration complexity, and storage for polytopes:
	\begin{itemize}[noitemsep,topsep=0pt]
		\item \emph{Per iteration cost:} For many important cases displayed in Section \ref{sec: subproblems}, $k$FW admits efficient subproblem oracles.
		\item \emph{Iteration complexity:} $k$FW achieves the same $\mathcal O(1/t)$ convergence rate of FW.
		Under additional regularity conditions, it achieves nonexponential finite convergence over the polytope and group norm ball and linear convergence over the spectrahedron and nuclear norm ball, as shown in Theorem \ref{thm: normalFWresult}, \ref{thm: nonexponentialFiniteConvergence}, and \ref{thm: LinearConvergence}. These convergence results are beyond the reach of FW and many of its variants \cite{guelat1986some,lacoste2015global,garber2016linear,freund2017extended}.
		\item \emph{Storage:} The storage required by $k$FW is $\mathcal{O}(kn)$, needed to store the $k$ best directions computed in each step, while the pairwise step, away step, and fully corrective step based FW \cite{lacoste2015global} require $\mathcal{O}(\min(tn,n^2))$ storage to accumulate vectors in $n$-dimensional Euclidean space computed from LOO \footnote{The algorithmic Caratheodory procedure described by \cite{beck2017linearly} can reduce the number
			of points stored to $n$.}.
	\end{itemize}
	A comparison of $k$FW with FW, away-step FW \cite{guelat1986some}, and fully corrective FW (FCFW) \cite[Algorithm 4]{jaggi2013revisiting} is shown in Table \ref{table: comparison}. 
	\newcontent{A recent result \cite{garber2020revisiting} shows that away-step FW can have better convergence and storage property after an initial burn-in period under similar assumptions as ours. Interestingly, in our experiments in Section \ref{sec: numerics}, we did not observe much of the benefit.}
	\begin{table}[tb]
		\centering
		\caption{Comparison of $k$FW, FW, FW with away step, and FCFW for Problem \eqref{opt: mainProblem}, smooth convex optimization over a constraint set
			$\Omega$ in a $n$-dimensional Euclidean space. We display the
			per iteration computation (per iter. comp.), storage,
			faster rate (compared to $\bigO(\frac{1}{t})$ rate) under the condition on $\Omega$,
			extra conditions on Problem \eqref{opt: mainProblem}
			to achieve the faster rate (Ex. Cond.),
			and the reference providing the proof of the rate. \newcontent{A recent result \cite{garber2020revisiting} shows that away-step FW can have better convergence and storage after a initial burn-in period under similar assumptions as ours.} However, it is hard to quantify 
			the burn-in peoriod as it depends on the parameter of the face where the solution lies. 
			Even without the extra conditions listed in the table,
			all algorithms admit a $\bigO(\frac{1}{t})$ convergence rate
			(see \cite{jaggi2013revisiting} and Theorem \ref{thm: normalFWresult}).
			Here $t\wedge n =\min(t,n)$.
			Definitions of the sparsity measure $\rsol$,
			strict complementarity (str. comp.),
			and quadratic growth (q.g.) can be found in Section \ref{sec: conditions}.
		} \label{table: comparison}
		\vspace{5pt}
		\begin{tabular}{|c|c|c|cc|c|c|}
			\hline
			Algorithm & Per iter. comp. &  Storage &  \multicolumn{2}{c|}{Rate and $\Omega$ Shape} & Ex. Cond. & Reference \\
			\hline
			\multirow{4}{*}{FW}
			& \multirow{4}{*}{LOO, $1$DS}         & \multirow{4}{*}{$\bigO(n)$}   & finite    &  polytope, group & str. comp.,                   & \multirow{2}{*}{Theorem \ref{thm: nonexponentialFiniteConvergence}} \\
			&                       &             &           &  norm ball                                       & q.g., and                                                 & \\ \cline{4-5}\cline{7-7}
			&                       &             & linear    &  spectrahedron                                   & $\rsol=1$                  & \multirow{2}{*}{Theorem \ref{thm: LinearConvergence}} \\
			&                       &             &           &  $\mathbf{B}_{\nucnorm{\cdot}}$                  &                            & \\
			\hline
			Away-step           & LOO, $1$DS,              & \multirow{3}{*}{$\bigO(n(t\wedge n))$} & \multirow{3}{*}{linear} & \multirow{3}{*}{polytope} & \multirow{3}{*}{q.g.} & \multirow{3}{*}{\cite{lacoste2015global},\cite{garber2020revisiting}} \\
			FW                  & and $t\wedge n$  inner  &                  &     &        &   &\\
			& products    &           &                  &     &        &\\
			\hline
			\multirow{2}{*}{FCFW}
			& LOO, and     & \multirow{2}{*}{$\bigO(n(t\wedge n))$} & \multirow{2}{*}{linear} & \multirow{2}{*}{polytope} & \multirow{2}{*}{q.g} &\multirow{2}{*}{\cite{lacoste2015global}} \\
			&  $(t\wedge n)$DS                                         &  &    &  & & \\
			\hline
			\multirow{4}{*}{$k$FW}&                     &\multirow{4}{*}{$\bigO(kn)$} & finite & polytope, group & str. comp.,   & \multirow{2}{*}{Theorem \ref{thm: nonexponentialFiniteConvergence}} \\
			& $k$LOO, and         &                             &        &  norm ball      &  q.g., and    & \\ \cline{4-5}\cline{7-7}
			& $k$DS              &                             & linear &  spectrahedron     &  $k\geq\rsol$ & \multirow{2}{*}{Theorem \ref{thm: LinearConvergence}} \\
			&                     &                             &        &  $\mathbf{B}_{\nucnorm{\cdot}}$ &   & \\
			\hline
		\end{tabular}
	\end{table}
	\myparagraph{Paper Organization}
	The rest of the paper is organized as follows. In Section \ref{sec: subproblems},
	we explain how to solve the two subproblems over a polytope $\Omega$,
	and how to extend the idea to group norm ball, spectrahedron,
	and nuclear norm ball.
	In Section \ref{sec: theoreticalGuarantees}, we describe a few analytical conditions,
	and then present the faster convergence guarantees of $k$FW under these conditions for
	the polytope, group norm ball, spectrahedron, and nuclear norm ball.
	We demonstrate the effectiveness of $k$FW numerically in Section \ref{sec: numerics}. 
	\newcontent{In Section \ref{sec: discussion}, we conclude the paper and present
	a discussion on related work and future direction is presented.}

	\myparagraph{Notation} The Euclidean spaces of interest in this paper
	are the $n$-th dimensional real Euclidean space $\real^\dm$,
	the set of real matrices $\real^{\dm_1\times \dm_2}$,
	and the set of symmetric matrices $\symMat^\dm$ in $\real^{\dm\times \dm}$.
	We equip the first one with the standard dot product and the latter two with the trace inner product.
	The induced norm is denoted as $\norm{\cdot}$ if not specified.
	%
	For a linear map $\mathcal{B}:\EE_1 \rightarrow \EE_2$ between two Euclidean spaces,
	we define its operator norm as $\opnorm{\mathcal{B}} = \max_{\norm{x}\leq 1}\norm{\mathcal{B}(x)}$.
	We denote the eigenvalues of a symmetric matrix $A\in \symMat^\dm$ as
	$
	\lambda_1(A)\geq \dots \geq \lambda_{\dm}(A).
	$
	The $i$-th largest singular value of a rectangular matrix $B\in \real^{\dm_1\times \dm_2}$
	is denoted as $\sigma_i(B)$.
	A matrix $A\in \symMat^\dm$
	is positive semidefinite if all its eigenvalues are nonnegative
	and is denoted as $A\succeq 0$ or $A\in \symMat_+^\dm$.
	The column space of a matrix $A$ is written as $\range(A)$.
	The $i$-th standard  basis vector with appropriate dimension is denoted as $e_i$.

	\section{Stronger subproblem oracles for polytopes and beyond}\label{sec: subproblems}
	In Section \ref{sec: subproblemPolytope}, we explain when the subproblem oracles
	can be implemented efficiently for polytopes.
	We then show how to extend $k$FW
	to more complex constraint sets by an appropriate definition of $k$LOO and $k$DS
	in Section \ref{sec: subproblemOtherConsSet}.

	\subsection{Stronger subproblem oracles for polytopes}\label{sec: subproblemPolytope}
	Let us first explain when the $k$LOO can be implemented efficiently for a polytope $\Omega \subseteq \real^{\dm}$.

	\myparagraph{Solving $k$LOO}
	Computing a LOO can be NP-hard for some constraint sets $\Omega$:
	for example, the $0\,\text{-}\,1$ knapsack problem can be formulated as linear optimization over an appropriate polytope.
	Hence we should not expect that we can compute a $k$LOO
	efficiently without further assumptions on the polytope $\Omega\subseteq \real^\dm$.
	Since many polytopes come from problems in combintorics,
	for these polytopes, computing a $k$LOO is equivalent to computing
	the $k$ best solutions to a problem in the combinatorics literature,
	and polynomial time algorithms are available for many polytopes
	\cite{murthy1968algorithm,lawler1972procedure,hamacher1985k,eppstein2014k}.
	We present the time complexity of computing $k$LOO
	for many interesting problems in Table \ref{table: kbestSolution} in the appendix.

	\myparagraph{Efficient $k$LOO}
	Unfortunately, for some polytopes,
	the time required to compute a $k$LOO grows superlinearly in $k$ even if
	$k\leq n$.
	Hence we restrict our attention to special structured polytopes
	for which the time complexity of $k$LOO is no more than $k$ times the
	complexity of LOO.

	Our primary example is the probability simplex $\Delta ^n =\conv(\{e_i\}_{i=1}^n)$ in $\real^{\dm}$.
	Since the vertices of $\Delta^n$ are
	the coordinate vectors $e_i,i=1,\dots,n$,
	the inner product of vertex $e_i$ with a vector $y\in \real^{n}$ is
	$\inprod{y}{e_i} = y_i$.
	Hence in this case, $k$LOO with input $y\in \real^{\dm}$
	simply outputs the coordinate vectors corresponding to the smallest $k$ values of $y$.
	Using a binary heap of $k$ nodes,
	we can scan through the entries of $y$ and
	update the heap to keep the $k$ smallest entries seen so far and their indices.
	Since each heap update takes time $\bigO(\log k)$,
	the time to compute $k$LOO is $\bigO(n\log k)$.
	A more sophisticated procedure called \emph{quickselect} improves the time to $\bigO(n+k)$ \cite{martinez2001optimal}, \cite[Section 2.1]{eppstein2014k}.
	Other examples of efficient $k$LOO includes, the $\ell_1$ norm ball, the spanning tree polytope \cite{eppstein1990finding},
	the Birkhoff polytope, \cite{murthy1968algorithm}, and the path polytope of a directed acyclic graph \cite{eppstein1998finding}.
	More details of each example and its application can be found in Section \ref{sec: tabelCombkLOO} in the appendix.

	Next, we explain how to compute the $k$ direction search.
	\myparagraph{$k$ direction search}
	The $k$ direction search problem optimizes the objective $f(x)$
	over $x \in \conv(w,v_1,\dots,v_k)=\{\sum_{i=1}^k \lambda_i v_i +\eta w \mid (\eta,\lambda)\in \Delta^{k+1}\}$.
	We parametrize this set by $(\eta,\lambda) \in \Delta^{k+1}$
	and employ the accelerated projected gradient method (APG) to solve
	\begin{equation}\label{opt: kdspolytope}
	\min_{(\eta,\lambda)\in \Delta^{k+1}}f\left (\sum_{i=1}^k \lambda_i v_i +\eta w \right).
	\end{equation}
	The constraint set here is a $k+1$ dimensional probability simplex;
	projection onto this set requires time $\bigO(k\log k)$ \cite{chen2011projection}.
	Hence for small $k$, we can solve \eqref{opt: kdspolytope} efficiently.
	We recover the output $x_{k-\text{DS}} =\sum_{i=1}^k \lambda_i^\star v_i +\eta^\star w$
	of $k$DS from the optimal solution $(\eta^\star,\lambda^\star)$ of \eqref{opt: kdspolytope}.

\begin{rem}\label{rmk: psimp}
Optimizing over $\conv(w,v_1,\dots,v_k)=\{\sum_{i=1}^k \lambda_i v_i +\eta w \mid (\eta,\lambda)\in \Delta^{k+1}\}$
using the representation $(\eta,\lambda)$ can be challenging due to the high dimension $k$ of this parametrization.
Here we discuss a few alternative parametrizations that facilitate optimization.

For example, consider the product of simplices $\prod_{j=1}^d \Delta_{k_j}\subset \real^{\sum_{j=1}^d k_j}$, which appears in a variety of applications \cite{lacoste2013block}.
Denote the $j$-th block of $w\in \real^{\sum_{j=1}^d k_j}$ as $[w]^j\in \real^{k_j}$,
and let $e^j_i\in \real^{\sum_{j=1}^d k_j}$ be the indicator of the $i$-th position in $j$-th block (1 there, and 0 everywhere else).
Write $e_{i_1\dots i_j\dots i_d} = \sum_{j=1}^d e_{i_j}^j$.
Define the set $I = \prod_{j=1}^dI_j$ where $I_j\subset \{1,\dots,k_j\}$, which 
might be the generated when we apply $|I_j|$LOO in each block within $\bigO((\sum_{j=1}^dk_j)\log(\max_j|I_j|))$ time. 
Suppose we want to optimize over $\conv(w,\{e_{i_1\dots i_d}\}_{(i_1,\dots,i_d)\in I})$. 
If we use the $(\eta,\lambda)$ representation, the size of $(\eta,\lambda)$ is $\prod|I_j|+1$ which can be much larger than the number of nonzeros, $\sum_{j=1}^n|I_j|$, of the solution $\xsol$ or even larger than the total dimension $\sum_{j=1}^n|k_j|$,
even if the support of $\xsol$ in $j$-th block is exactly $I_j$.

To remedy the situation, we can equivalently parametrize the feasible set
as convex combinations of $w$ and the indicator vectors $e^j_i$.
Explicitly, consider the set $A_{I} =\{
(\{\alpha_{ij}\}_{1\leq j\leq d,i \in I_j },\eta)\mid
\alpha_{ij}\geq 0 \,\forall \,i,j,
\eta \geq 0,
\sum_{i\in I_j} \alpha_{ij} + \eta=1\}$.
It is then straightforward to verify that $\conv(w,\{e_{i_1\dots i_d}\}_{(i_1,\dots,i_d)\in I})= \{\eta w + \sum_{j=1}^d\sum_{i\in I_j} \alpha_{ij}e^j_i\mid (\alpha_{ij},\eta) \in A_{I}\}$. Note the latter set can be parametrized by  $\sum_{j=1}^{d}|I_j| +1$ variables
and hence should be easier to optimize over.
One strategy is to use bisection to choose $\eta$ by optimizing over $\alpha$ for each fixed $\eta$. Jointly optimizing $\eta$ and $\alpha$ might be hard as projection to $A_I$ is not easily computable.

An alternative parametrization introduces a slightly larger convex set.
Indeed, consider $B_{w,I} = \{(\alpha_{ij},\eta_j)_{1\leq j\leq d,i \in I_j }\mid
\alpha_{ij}\geq 0 \,\forall \,i,j,
\eta_j \geq 0 \,\forall j,
\sum_{i\in I_j} \alpha_{ij} + \eta_j\sum_{i\not\in I_j} [w]^j_i) =1\}$.
Then
\[
\conv(w,\{e_{i_1\dots i_d}\}_{(i_1,\dots,i_d)\in I}) \subset \left\{ \sum_{j=1}^d\left(\sum_{i\not\in I_j}\eta_j
[w]^j_ie^j_i + \sum_{i\in I_j} \alpha_{ij}e^j_i\right) \middle| (\alpha_{ij},\eta) \in B_{w,I}\right\}.
\]

Note the latter set has $\sum_{j=1}^{d}|I_j| +d$ variables instead of $\sum_{j=1}^{d}|I_j| +1$ variables as in the previous approach.
However, note that optimizing over variables in $B_{w,I}$ is actually easier
as $B_{w,I}$ is a product of scaled simplices which enables faster projection.
\end{rem}
	\begin{algorithm}[H]
		\caption{Frank-Wolfe with line search}
		\label{alg: Frank_wolfe}
		\begin{algorithmic}
			\STATE {\bfseries Input:} initialization $x_0\in \mathbf{E}$
			\FOR{$t=1,2,\dots,$ }
			\STATE \textbf{Linear optimization oracle (LOO):} Compute $v_t \in \arg\min_{v \in \mathbf{E}} \inprod{v}{\nabla f(x)}$.
			\STATE \textbf{Line search:} solve $\hat{\eta}=\arg\min_{\eta \in [0,1]}f(\eta x_{t}+(1-\eta)v_t )$  and set $x_{t+1} = \hat{\eta} x_{t}+(1-\hat{\eta})v_t$.
			\ENDFOR
		\end{algorithmic}
	\end{algorithm}
	\vspace{-15pt}
	\begin{algorithm}[H]
		\caption{$k$FW for polytope}
		\label{alg k-FW, polytope}
		\begin{algorithmic}
			\STATE {\bfseries Input:} initialization $x_0\in \Omega$, and an integer $k>0$
			\FOR{$t=1,2,\dots,$ }
			\STATE \textbf{$k$ linear optimization oracle ($k$LOO):} compute $k$ extreme points, $v_1,\dots, v_k$ with smallest
			$\inprod{v}{\nabla f(x_t)}$ among all extreme points of $v\in \Omega$.
			\STATE \textbf{$k$ direction search ($k$DS):} Solve $\min_{x\in \conv(v_1,\dots,v_k,x_t)} f(x)$ to obtain $x_{t+1}$.
			\ENDFOR
		\end{algorithmic}
	\end{algorithm}
	\vspace{-15pt}
	\begin{algorithm}[H]
		\caption{$k$FW for other $\Omega$}
		\label{alg k-FW, otherOmega}
		\begin{algorithmic}
			\STATE {} Same as Algorithm \ref{alg k-FW, polytope},
			replacing the $k$LOO (with input $\nabla f(x_t)$) and
			$k$DS (with input consisting of $x_t$ and the output of $k$LOO, and output $x_{t+1}=x_{k\text{DS}}$),
			as described in Section \ref{sec: subproblemOtherConsSet}.
		\end{algorithmic}
	\end{algorithm}

	\subsection{Stronger subproblem oracle for nonpolytope $\Omega$} \label{sec: subproblemOtherConsSet}
	In this section, we explain how to extend $k$FW to operate on
	the unit group norm ball, spectrahedron, and nuclear norm ball.
	We shall \emph{redefine} the $k$LOO and $k$DS accordingly.
	\subsubsection{Group norm ball}
	Let us first define the
	group norm ball. Given a partition $\mathcal{G}=\{g_1,\dots,g_l\}$
	of the set $[\dm] =\{1,\dots,\dm\}$ ($\cup_{g\in \mathcal{G}} g = [\dm]$ and $g_i\cap g_j =\emptyset$ for $i\not= j$),
	the group norm and the unit group norm ball are
	\begin{equation}
	\norm{x}_\mathcal{G}:\,= \sum_{g\in \mathcal{G}} \norm{x_g},\, \forall x\in \real^{\dm}
	\quad \text{and} \quad
	\mathbf{B}_{\norm{\cdot}_\mathcal{G}}=\{x\in \real^{\dm}\mid\norm{x}_\mathcal{G}\leq 1\},
	\quad
	\text{respectively}.
	\end{equation}
	Here the base norm $\norm{\cdot}$ can be any $\ell_p$ norm, or even some matrix norms.
	We restrict our attention to the $\ell_2$ norm in the main text for ease of
	presentation\footnote{See Section \ref{sec: discussionOntheNorm} in the appendix for further discussion.}.
	The vector $x_g$ is formed by the entries of $x$ with indices in $g$.

	Let us now define $k$LOO and $k$DS for the group norm ball $\mathbf{B}_{\norm{\cdot}_\mathcal{G}}$.

	\myparagraph{$k$LOO}
	Given an input $y \in \real^{\dm}$, $k$LOO outputs
	the $k$ groups $v_1,\dots,v_k \in \mathcal G$ with
	largest $\norm{y_v}$ among all $v\in \mathcal{G}$.
	Here the $k$ best ``directions'' are not vectors, but groups.
	Notice the groups $g \in \mathcal G$ are disjoint, so
	$\sum_{i=1}^k \norm{y_{v_k}} \leq \norm{y}_\mathcal G$.
	In this sense, $k$LOO for the group norm ball generalizes $k$LOO for the simplex.

	\myparagraph{$k$DS}
	Given inputs $w\in  \mathbf{B}_{\norm{\cdot}_\mathcal{G}}$ and $v_1,\dots,v_k \in \mathcal{G}$,
	$k$DS for the group norm ball optimizes the objective $f(x)$ over
	convex combinations of $w$ and
	vectors supported on $\cup_{i=1}^k v_i$.
	To parametrize vectors supported on $\cup_{i=1}^k v_i$,
	we introduce a variable $\lambda^{v_1,\dots,v_k}\in \real^{\dm}$
	supported on $\cup_{i=1}^k v_i$.
	That is, $\lambda^{v_1,\dots,v_k}_i = 0$ for all $i\not \in \cup_{i=1}^k v_i$.
	Our decision variable $x$ is written as
	\begin{equation}\label{eq: groupnormorginalkDS}
	x = \eta w + \lambda^{v_1,\dots,v_k},
	\quad \text{where} \quad
	\eta +\norm{\lambda^{v_1,\dots,v_k}}_{\mathcal{G}} \leq 1,~
	\eta\geq 0.
	\end{equation}
	We solve the following problem to obtain $x_{k\text{DS}}$:
	\begin{equation}\label{eq: groupnormokDS}
	\mbox{minimize} \quad f( \eta w + \lambda^{v_1,\dots,v_k})
	\quad \mbox{subject to}\quad
	\eta +\norm{\lambda^{v_1,\dots,v_k}}_{\mathcal{G}} \leq 1,~ \eta\geq 0.
	\end{equation}
	We can again employ APG to solve this problem,
	as the projection step only requires $\bigO(k\log k+ \sum_{i=1}^k |v_k|)$ time.
	(See more details in Section \ref{sec: projgroupnormball}.)

	\subsubsection{Nuclear norm ball}
	We now define $k$LOO and $k$DS for
	the unit nuclear norm ball $\mathbf{B}_{\nucnorm{\cdot}} =
	\{X\in \real^{\dm_1\times \dm_2} \mid \nucnorm{X}\leq 1\}$,
	where $\nucnorm{X}= \sum_{i=1}^{\min(\dm_1,\dm_2)} \sigma_i (X)$, the sum of singular values.

	\myparagraph{$k$LOO} Given an input matrix $Y\in \real^{\dm_1\times \dm_2}$,
	define the $k$ best directions
	of the linearized objective $\min_{V\in \Omega(\alpha)}\inprod{V}{Y}$ to be
	the pairs $(u_1,v_1),\dots,(u_k,v_k)$,
	the top $k$ left and right orthonormal singular vectors of $Y$.
	Collect the output as $U = [u_1,\dots,u_k] \in \real^{\dm_1\times k}$
	and $V=[v_1,\dots,v_k] \in \real^{\dm_2\times k}$.

	\myparagraph{$k$DS}
	Take as inputs $W\in \mathbf{B}_{\nucnorm{\cdot}}$
	and $(U,V) \in \real^{\dm_1\times k} \times \real^{\dm_2\times k}$
	with orthonormal columns. Inspired by \cite{helmberg2000spectral},
	we consider the spectral convex combinations of $W$ and $u_iv_i^\top$ instead of just convex combination:
	\[
	X = \eta W+ USV^\top
	\quad \text{where}\quad
	\eta \geq 0,~\eta + \nucnorm{S}\leq 1.
	\]
	Next, we minimize the objective $f(X)$ parametrized by $(\eta,S)\in \real^{1+k^2}$ to obtain $X_{k \text{DS}}$:
	\[
	\mbox{minimize} \quad  f(\eta W+ USV^\top)\quad \mbox{subject to}\quad {\eta + \nucnorm{S}\leq 1,\eta\geq 0}.
	\]
	Again, we use APG to solve this problem.
	Projection requires singular value decomposition of a $k^2$ matrix, which
	is tolerable for small $k$.
	(See Section \ref{sec: projSpechedronAndNuclearNorm} for details.)

	A summary of $k$LOO and $k$DS for these sets appears
	in Table \ref{tb: table: kfwkLMOexample} and \ref{table: kfwkKsearchexample} in the appendix respectively. The case of spectrahedron
	has been addressed very recently by \cite{DingFeiXuYang2020}. We give a self-contained description of its
	$k$LOO and $k$DS in Section \ref{sec: AppendixSubproblemsSpectrahedron}.
	The $k$FW algorithm for unit group norm ball, spectrahedron, and unit nuclear norm ball is presented as Algorithm \ref{alg k-FW, otherOmega}.
\begin{rem}(Choice of $k$)
Having discussed $k$FW for various constraint sets, here we discuss the choice of $k$. Determining the choice of $k$ is of great importance as our guarantees require $k$ to be larger or equal to the underlying sparsity measure to observe significant speedup, see 
Definition \ref{def: sparsityMeasure} and results in Section \ref{sec: theoreticalGuarantees}. Domain knowledge is then particular helpful in this regard. In our experiments, for synthetic datasets, we have set $k$ to be the ground truth of the sparsity measure. On real data, we determine $k$ according to the expected sparsity level of the data (e.g. the expected number of support vectors in SVM, see Section \ref{sec: numerics} for details.) 

Here we provide a adaptive method to adjust $k$, which is shown in Algorithm \ref{alg: Frank_wolfe_adp_k}. The main idea is to increase $k$ in every iteration until it cannot improve the relative decrease of the objective function. We found that setting the increasing factor $\varsigma=2$ works every well in practice. For example, Figure \ref{fig_lasso_adp_k} shows the results of $k$FW with adaptive $k$ on the Lasso problem. We see that the algorithm is able to find a (possibly) optimal $k$ effectively with different initialization $k_0$. We also found similar performance (not shown here) in the experiments of other tasks discussed in Section \ref{sec: numerics}.
\end{rem}
\begin{algorithm}[H]
		\caption{$k$FW with adaptive $k$}
		\label{alg: Frank_wolfe_adp_k}
		\begin{algorithmic}
			\STATE {\bfseries Input:} initialization $x_0$ and $k_0$, parameter $\varsigma>1$, $inc=\text{True}$
			\FOR{$t=0,1,\dots,$ }
			\STATE Compute $f(x_t)$
		    		\IF{$t=2$}
		         \STATE $k_t=\varsigma k_{t-1}$
		    		\ENDIF
		    		\IF{$t>2$ and $inc=\text{True}$}
		    		\IF{$(f(x_{t-1})-f(x_{t}))/f(x_{t-1})>(f(x_{t-2})-f(x_{t-1}))/f(x_{t-2})$}
		         \STATE $k_t=\varsigma k_{t-1}$
		         \ELSE
		         \STATE $inc=\text{False}$
		         \ENDIF
		    		\ENDIF
			\STATE \textbf{$k$LOO} step
			\STATE \textbf{$k$DS} step
			\ENDFOR
		\end{algorithmic}
\end{algorithm}

	\begin{figure}
	\centering
	\includegraphics[width=12cm]{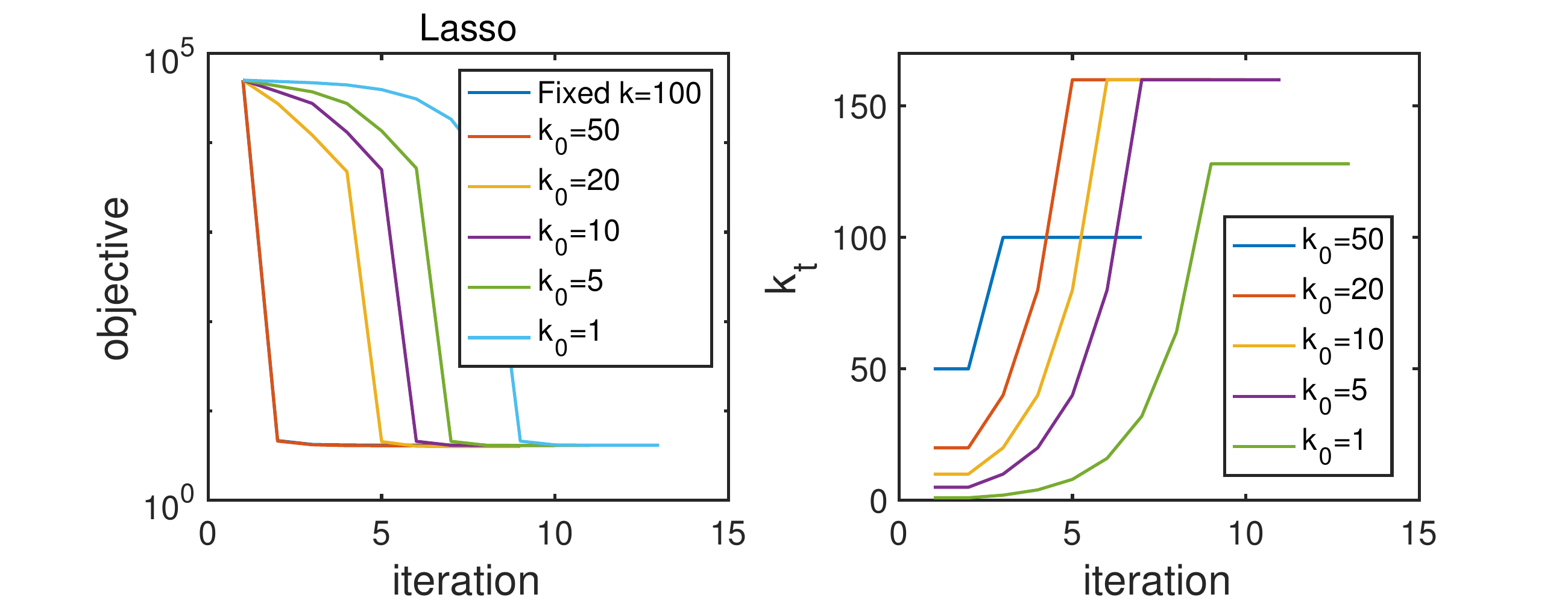}
		\caption{$k$FW with adaptive $k$ (Algorithm \ref{alg: Frank_wolfe_adp_k}) for Lasso}\label{fig_lasso_adp_k}
	\end{figure}
	\section{Theoretical guarantees}\label{sec: theoreticalGuarantees}
	%
	In this section, we first present a few definitions and conditions required to state our results.
	Then we present the theorems and provide intuitions.
	Proofs are deferred to
	Section \ref{sec: groupnormballproofconvergence} and \ref{sec: spectrahedronNuclearNormproofconvergence}.

	\subsection{Analytical conditions} \label{sec: conditions}
	Here we define the sparsity measure $\rsol$ for each constraint set $\Omega$
	and the complementarity measure $\delta$.

	%
	\begin{defn}[Sparsity measure $\rsol$] \label{def: sparsityMeasure}
		Suppose the solution $\xsol$ of \eqref{opt: mainProblem} is unique.
		%
		\begin{itemize}[noitemsep,topsep=0pt]
			\item \emph{Polytope:} The sparsity measure $\rsol$ is the number of extreme points
			of the smallest face $\mathcal{F}(\xsol)$ of $\Omega$ containing $\xsol$.
			\item \emph{Group norm ball:} The sparsity measure $\rsol$ is
			the number of groups $g \in \mathcal G$ such that $(\xsol)_g\not=0$,
			or equivalently, the cardinality of the set $\mathcal{F}(\xsol):=\{g\mid (\xsol)_g\not=0\}$:
			\item \emph{Spectrahedron and unit nuclear norm ball:}
			\mnote{Replace Spechedron with Spectrahedron globally.}
			The sparsity measure $\rsol$ is $\rank(\Xsol)$,
			or equivalently, the dimension of $\mathcal{F}(\Xsol)= \range(\Xsol)$.
		\end{itemize}
	\end{defn}
	%
	In short, the sparsity is the cardinality or the dimension of
	the support set $\mathcal{F}(\xsol)$. 

	\begin{defn}[Strict complementarity]\label{def: strictComplementarity}
		Problem \eqref{opt: mainProblem} admits strict complementarity
		if it has a unique solution $\xsol\in \partial \Omega$
		and $-\nabla f(\xsol) \in \relint(N_{\Omega}(\xsol))$\footnote{Here $\partial \Omega$ is
			the topological boundary of $\Omega$ under the standard topology of $\EE$. The set
			$N_{\Omega}(\xsol)$ is the normal cone of $\Omega$ at $\xsol$, i.e.
			$N_{\Omega}(\xsol) = \{y \mid \inprod{y}{x}\leq \inprod{y}{\xsol}, \,\forall x\in \Omega\}$,
			and $\relint(\cdot)$ is the relative interior.}
		The complementarity measure $\delta$ is the gap between
		the inner products of $\xsol$ and the elements of the \emph{complementary set} $\mathcal{F}^c(\xsol)$ defined below:
		\begin{equation}
		\begin{aligned}\label{eq: polytopeDeltaDef}
		\delta = \min \{ \inprod{u}{\nabla f(\xsol)} -\inprod{\xsol}{\nabla f(\xsol)}\mid u\in \mathcal{F}^c(\xsol)\subseteq\Omega\}.
		\end{aligned}
		\end{equation}
	\end{defn}
	\myparagraph{The complementary set  $\mathcal{F}^c(\xsol)$}
	Morally, the complementary set $\mathcal{F}^c(\xsol)$ is
	the complement (in $\Omega$) of elements supported in $\mathcal{F}(\xsol)$.
	Our formal definition also respects the vector structure of these sets.
	\begin{itemize}[noitemsep,topsep=0pt]
		\item Polytope: The complementary space $\mathcal{F}^c(\xsol)$ is the convex hull of
		all vertices not in $\mathcal{F}(\xsol)$.
		\item Group norm ball: The complementary space $\mathcal{F}^c(\xsol)$
		is the set of all vectors in $\Omega$
		not supported in $\mathcal{F}(\xsol)=\{g\mid (\xsol)_g\not=0\}$.
		\item spectrahedron and nuclear norm ball: The complementary space  $\mathcal{F}^c(\Xsol)$
		is the set of all matrices in $\Omega$ with
		column space orthogonal to $\mathcal{F}(\Xsol)=\range(\Xsol)$.
	\end{itemize}
	Table \ref{table: supportComplementarityGapDelta} in the appendix
	catalogues $\rsol$, $\mathcal{F}(\xsol)$, $\mathcal{F}^c$, and $\delta$ for several sets $\Omega$.
	Note that the definition of the gap $\delta$ is always nonnegative due to optimality condition of \eqref{opt: mainProblem}.
	It is indeed positive when strict complementarity holds as shown in Lemma \ref{lem: positiveDelta} in the appendix.

	\myparagraph{Remarks on strict complementarity}
	Two aspects of strict complementarity have important implications
	for $k$FW. (See further discussion in \ref{sec: strictComplementarityFurtherDiscussion}.)
	First, structurally, the strict complementarity condition ensures
	robustness of $\rsol$ under perturbations of the problem. 
	Indeed, consider the problem
	\begin{equation}
	\min_{x\in \Delta^n} \norm{x-\sigma e_1}^2+\inprod{c}{x}.
	\end{equation}
	For $c=0$ and any $ \sigma \in [0,1]$,
	the unique solution $\xsol = \sigma e_1$, which is also sparse.
	However, it can be easily verified that strict complementarity \emph{fails} in this case.
	As a result, almost any small perturbation $c\not=0$ results in a solution with
	sparsity $\rsol>1$.
	We refer the reader to Example \ref{example: robustitcity} in the appendix
	and to \cite[Table 2]{garber2020revisiting}, and to \cite[Lemma 2 and 10]{garber2019linear} for more discussion on the relationship
	between complementarity and robustness of the solution sparsity.

	Second, algorithmically, the proof of Theorem \ref{thm: nonexponentialFiniteConvergence}
	and \ref{thm: LinearConvergence} reveal that $k$FW identifies the
	support set $\mathcal{F}(\xsol)$ once the iterate is near $\xsol$.
	The gap $\delta$ tells us how close it must be to identify the support.

	We introduce the quadratic growth condition, a strictly weaker version of strong
	convexity. It is has been studied in \cite{drusvyatskiy2018error,necoara2019linear} to ensure
	linear convergence of some first order algorithms, and is also necessary as shown in \cite{necoara2019linear}.
	\begin{defn}[Quadratic growth]\label{def: quadraticGrowth}
		Problem \eqref{opt: mainProblem} admits quadratic growth with parameter $\gamma>0$ if
		it has a unique solution $\xsol$ and for all $x\in \Omega$,
		$
		f(x)-f(\xsol)\geq \gamma \norm{x-\xsol}^2.
		$
	\end{defn}

	\myparagraph{Remarks on quadratic growth}
	For all constraint set $\Omega$ considered in this paper,
	quadratic growth holds under strict complementarity for strongly convex $g$ in \eqref{opt: mainProblem}, $\min_{x\in\Omega} g(\Amap x) +\inprod{c}{x}$.
	(See Theorem \ref{thm: qgundersc} in the appendix for a proof.)
	Quadratic growth also holds for almost all $c$
	if $g$ and $\Omega$ are semi-algebraic \cite[Corollary 4.8]{drusvyatskiy2016generic}.


	%
	\subsection{Guarantees for $k$FW}
	Our first theorem states that $k$FW never requires more iterations than FW.
	\begin{thm}\label{thm: normalFWresult}
		Suppose $f$ is $L_f$-smooth and convex and $\Omega$ is convex compact with diameter $D$.
		Then for any $k\geq 1$ and for all $t\geq 1$,
		the iterate $x_t$ in $k$FW (Algorithm \ref{alg k-FW, polytope} and \ref{alg k-FW, otherOmega}) satisfies
		\begin{equation}
		\begin{aligned}\label{eq: usualFW}
		f(x_t) - f(\xsol)\leq \frac{L_f D^2}{t}.
		\end{aligned}
		\end{equation}
	\end{thm}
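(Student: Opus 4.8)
The plan is to reduce $k$FW to standard FW by observing that the $k$DS step is always at least as powerful as the standard line search. Specifically, for any $k\geq 1$ and at any iteration $t$, let $v_t$ be a minimizer of $\inprod{v}{\nabla f(x_t)}$ over the extreme points of $\Omega$ (equivalently over $\Omega$). By definition of $k$LOO, this $v_t$ (or, for the redefined oracles on the group norm ball, spectrahedron, and nuclear norm ball, the corresponding best rank-one/single-group direction) is among the $k$ directions $v_1,\dots,v_k$ returned by $k$LOO, and the feasible set $\conv(v_1,\dots,v_k,x_t)$ (or its spectral analogue in Algorithm \ref{alg k-FW, otherOmega}) contains the segment $\{\eta x_t + (1-\eta)v_t : \eta\in[0,1]\}$. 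Hence
\begin{equation}
f(x_{t+1}) = \min_{x\in \conv(v_1,\dots,v_k,x_t)} f(x) \leq \min_{\eta\in[0,1]} f(\eta x_t + (1-\eta) v_t),
\end{equation}
so the objective value attained by one $k$FW step is no larger than that attained by one standard FW step from the same point.

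Next I would run the textbook FW descent argument on this upper bound. Writing $d_t = x_{t+1}^{\mathrm{FW}} - x_t$ where $x_{t+1}^{\mathrm{FW}} = \eta x_t + (1-\eta)v_t$, convexity of $f$ gives $\inprod{\nabla f(x_t)}{v_t - x_t} \leq f(\xsol) - f(x_t)$, and $L_f$-smoothness gives, for any step size $\eta\in[0,1]$,
\begin{equation}
f((1-\eta)x_t + \eta v_t) \leq f(x_t) + \eta \inprod{\nabla f(x_t)}{v_t - x_t} + \frac{L_f \eta^2}{2}\norm{v_t - x_t}^2 \leq f(x_t) - \eta(f(x_t) - f(\xsol)) + \frac{L_f D^2 \eta^2}{2},
\end{equation}
using $\norm{v_t - x_t}\leq D$. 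Combining with the previous display, the gap $h_t := f(x_t) - f(\xsol)$ satisfies $h_{t+1} \leq h_t - \eta h_t + \frac{L_f D^2}{2}\eta^2$ for all $\eta\in[0,1]$. From here the $\bigO(1/t)$ rate follows by the standard induction: take $\eta = 2/(t+1)$ (or $\eta=\min(1, h_t/(L_f D^2))$), verify the base case $h_1 \leq L_f D^2/2 \leq L_f D^2$ from one step with $\eta=1$, and inductively show $h_t \leq 2L_f D^2/(t+1) \leq L_f D^2/t$; alternatively one can simply cite the FW convergence bound from \cite{jaggi2013revisiting} applied to this recursion.

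The only real subtlety — and the part that needs a careful sentence rather than a calculation — is verifying that the containment $\{\eta x_t + (1-\eta)v_t\} \subseteq$ (the $k$DS feasible set) genuinely holds for each of the redefined oracles in Section \ref{sec: subproblemOtherConsSet}, not just for polytopes. For the group norm ball, the single best group returned by $k$LOO, scaled appropriately, is the FW atom, and the $k$DS parametrization \eqref{eq: groupnormokDS} with $\eta + \norm{\lambda}_{\mathcal G}\leq 1$ clearly contains the FW segment. For the spectrahedron and nuclear norm ball, the spectral convex combination $\eta W + USV^\top$ with $\eta + \nucnorm{S}\leq 1$ contains $\eta W + (1-\eta) u_1 v_1^\top$, the FW atom, by choosing $S$ rank one. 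Once this is dispatched uniformly, the descent argument is oracle-agnostic, and no extra hypotheses (strict complementarity, quadratic growth, $k\geq \rsol$) are needed — the bound holds for every $k\geq 1$, recovering ordinary FW when $k=1$.
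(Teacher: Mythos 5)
Your proposal is correct and follows exactly the same route as the paper: the paper's one-line proof likewise observes that the FW atom $v_t=\arg\min_{v\in\Omega}\inprod{\nabla f(x_t)}{v}$ is feasible for the $k$DS subproblem, so each $k$FW step does at least as well as an exact-line-search FW step, and then invokes the standard FW rate from \cite{jaggi2013revisiting}. Your additional care in checking the containment of the FW segment in the redefined $k$DS feasible sets for the group norm ball, spectrahedron, and nuclear norm ball is a point the paper leaves implicit, but it does not change the argument.
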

	\begin{proof}
		The inequality \eqref{eq: usualFW} follows from the proof of convergence of FW
		as in \cite{jaggi2013revisiting} by noting that
		the vector $v_t =\arg\min_{v\in \Omega} \inprod{\nabla f(x_t)}{v}$ is feasible
		for the $k$DS minimization problem.
	\end{proof}
	The theorem shows that $k$FW converges faster when $k\geq \rsol$ for the polytope and group norm ball.
	\begin{thm} \label{thm: nonexponentialFiniteConvergence}
		Suppose that $f$ is $L_f$-smooth and convex, $\Omega$ is convex compact with diameter $D$, Problem \eqref{opt: mainProblem} satisfies
		strict complementarity and quadratic growth, and $k\geq \rsol$.
		If the constraint set $\Omega$ is a polytope or a unit group norm ball, then the gap $\delta>0$ and
		$k$FW finds $\xsol$ in at most $T+1$ iterations, where $T$ is
		\begin{equation}
		\begin{aligned}
		T = \frac{4L_f^3D^4}{\gamma\delta^2}.
		\end{aligned}
		\end{equation}
	\end{thm}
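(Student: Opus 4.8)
The plan is to combine three ingredients: (i) the $\mathcal{O}(1/t)$ rate of Theorem~\ref{thm: normalFWresult} to force the iterate into a small neighborhood of $\xsol$; (ii) strict complementarity to show that, once the iterate is close enough, the $k$LOO returns exactly the extreme points spanning the support face $\mathcal{F}(\xsol)$; and (iii) the fact that $k \ge \rsol$ so that the $k$DS subproblem optimizes over a convex set actually containing $\xsol$, hence recovers it exactly in one shot. So the argument is really about ``how long until identification,'' and then termination is immediate.

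First I would quantify the identification step. By Lemma~\ref{lem: positiveDelta} (cited in the excerpt), strict complementarity gives $\delta>0$. The key geometric claim is: if $\norm{x_t - \xsol}$ is small enough, then $\inprod{u}{\nabla f(x_t)} - \inprod{v_i^\star}{\nabla f(x_t)} > 0$ for every extreme point $u \in \mathcal{F}^c(\xsol)$ and every extreme point $v_i^\star$ of $\mathcal{F}(\xsol)$. This follows by writing $\nabla f(x_t) = \nabla f(\xsol) + (\nabla f(x_t) - \nabla f(\xsol))$, bounding the perturbation term by $L_f \norm{x_t - \xsol} \cdot D$ via smoothness and the diameter bound, and using that the gap at $\xsol$ is at least $\delta$ (on the complementary side) while it is $0$ on the face (all extreme points of $\mathcal{F}(\xsol)$ are minimizers of $\inprod{\cdot}{\nabla f(\xsol)}$ over $\Omega$, by the optimality condition). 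Thus once $L_f D \norm{x_t-\xsol} < \delta/2$, the $k$ smallest inner products $\inprod{v}{\nabla f(x_t)}$ are attained by extreme points lying in $\mathcal{F}(\xsol)$; since $\mathcal{F}(\xsol)$ has exactly $\rsol \le k$ extreme points, the $k$LOO output $\{v_1,\dots,v_k\}$ contains all of $v_1^\star,\dots,v_{\rsol}^\star$, hence $\xsol \in \conv(x_t, v_1,\dots,v_k)$, and the $k$DS step returns $x_{t+1} = \xsol$ exactly.

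Next I would convert the accuracy guarantee into a distance guarantee using quadratic growth: $\gamma \norm{x_t - \xsol}^2 \le f(x_t) - f(\xsol) \le L_f D^2 / t$, so $\norm{x_t - \xsol}^2 \le L_f D^2 / (\gamma t)$. We need $L_f D \norm{x_t-\xsol} < \delta/2$, i.e. $L_f^2 D^2 \cdot L_f D^2/(\gamma t) < \delta^2/4$, which holds as soon as $t > 4 L_f^3 D^4 / (\gamma \delta^2) = T$. Therefore at iteration $t = T+1$ (the first integer strictly exceeding $T$, or one should be slightly careful with the inequality being strict vs.\ weak — a harmless constant adjustment) the identification property holds, the $k$DS step sets $x_{T+1} = \xsol$, and the algorithm has found the solution in at most $T+1$ iterations.

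The main obstacle, and the place I would spend the most care, is the identification lemma for the two different constraint sets. For a polytope the argument above is clean because ``$k$ best extreme points'' is literal. For the group norm ball the $k$LOO returns groups, not vertices, and $\mathcal{F}(\xsol)$ is the index set of active groups; I would need the analogous statement that once $x_t$ is close to $\xsol$, the $k$ groups with largest $\norm{(\nabla f(x_t))_g}$ include all active groups of $\xsol$, and that the $k$DS parametrization \eqref{eq: groupnormorginalkDS} then contains $\xsol$. This requires translating strict complementarity / the gap $\delta$ into a statement about group-block norms of the gradient — essentially the normal cone of the group norm ball at $\xsol$ — and then repeating the smoothness perturbation bound. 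I would handle polytope and group norm ball in parallel, isolating the one ``oracle identifies the support'' lemma per case and feeding both into the common accuracy-to-distance-to-termination chain above; the constants are arranged so both cases land at the same $T$.
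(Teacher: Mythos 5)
Your proposal is correct and follows essentially the same route as the paper's proof: quadratic growth plus the $\mathcal{O}(1/t)$ rate of Theorem~\ref{thm: normalFWresult} to force $\norm{x_t-\xsol}\leq \delta/(2L_fD)$, then the smoothness perturbation bound on $\inprod{\nabla f(x_t)}{v-u}$ to show $k$LOO identifies $\mathcal{F}(\xsol)$, so that $\xsol$ is feasible (hence returned) by $k$DS. The group-norm-ball adaptation you flag is exactly what the paper does in its appendix, using normalized per-group vectors $\xsol^{g_\star}$ in place of vertices.
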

	\begin{proof}
		The proof follows from the intuition that once $x_t$ is close to $\xsol$, the set
		$\mathcal{F}(\xsol)$ can be identified using $\nabla f(x_t)$. The fact that $\delta>0$ is shown in
		Lemma \ref{lem: positiveDelta}.
		Let us now consider Algorithm \ref{alg k-FW, polytope} whose constraint set $\Omega$ is a
		polytope. Using quadratic growth in the following step $(a)$, and Theorem \ref{thm: normalFWresult} in the following second step $(b)$, and the choice in the following step $(c)$, the iterate $x_t$ with $t\geq T$ satisfies that
		\begin{equation}\label{eq: xtclosetooptimalx}
		\norm{x_t - \xsol} \overset{(a)}{\leq} \sqrt{\frac{1}{\gamma} (f(x_t)-f(\xsol))}
		\overset{(b)}{\leq} \sqrt{\frac{L_f D^2}{\gamma T}} \overset{(c)}{\leq} \frac{\delta}{2L_fD}.
		\end{equation}

		Next, for any $t\geq T$, we have that for any vertex $v$ in $\mathcal{F}(\xsol)$, and any vertices $u$ in $\mathcal{F}^c(\xsol)$,
		\begin{equation}\label{eq: correctExtremePoint}
		\begin{aligned}
		\inprod{\nabla f(x_t)}{v}  -\inprod{\nabla f(x_t)}{u} & = 	\inprod{\nabla f(\xsol)}{v-u} + \inprod{\nabla f(x_t) - \nabla f(\xsol)}{v-u}\\
		&\overset{(a)}{\leq}   -\delta  + \inprod{\nabla f(x_t) - \nabla f(\xsol)}{v-u} \overset{(b)}{\leq} -\frac{\delta}{2}.
		\end{aligned}
		\end{equation}
		Here in step $(a)$, we use the definition of $\delta$ in \eqref{eq: polytopeDeltaDef} and $\inprod{\xsol}{\nabla f(\xsol)}
		= \inprod{v}{\nabla f(\xsol)}$ using the optimality condition for Problem \eqref{opt: mainProblem} and $\mathcal{F}(\xsol)$ being the smallest face containing $\xsol$. In step $(b)$, we use the bound in \eqref{eq: xtclosetooptimalx}
		, Lipschitz continuity of $\nabla f(x)$, and $\norm{v-u}\leq D$.

		Thus, the $k$LOO step will produce all the vertices in $\mathcal{F}(\xsol)$ as $k\geq \rsol$ after $t\geq T$, and so $\xsol$ is a feasible and optimal solution
		of the optimization problem in the $k$direction search step. Hence, Algorithm \ref{alg k-FW, polytope} finds the optimal solution $\xsol$ within $T+1$ many steps. The case for unit group norm ball
		can be similarly analyzed and we defer the detail to Section \ref{sec: groupnormballproofconvergence} in the appendix.
	\end{proof}

\begin{rem}[The burn-in period $T$] 
	The initial ``burn-in'' period scales as $\bigO\left(\frac{4L_f^3D^4}{\gamma\delta^2} \right)$, which is arguably too large for certain choice of $L_f,D,\gamma$, and $\delta$. It is possible to remedy the situation for many polytopes by incoporating the technique from  \cite{garber2016linear} by simply adding the atom $\gamma_t(v_t^+-v_t^-)$ proposed in \cite[Algorithm 3]{garber2016linear} into our $k$DS. Utilizing their convergence rate result \cite[Theorem 1]{garber2016linear}, the time $T$ an be improved to $\frac{\mbox{card} (x_\star )2L_f D^2}{\gamma}\log \left(\frac{4L_f^2D^2}{\delta^2\gamma} \right)$,  where $\mbox{card} (x_\star )$ is the number of nonzeros in $x_\star $. However, we note that in our experiments, the number of iterations of $k$FW is extremely low and the estimate $T$ is too pessimistic.
\end{rem}

\begin{rem}[Subproblem complexity] The finite complexity result for the polytope and group norm ball requires that each $k$DS solves the subproblem \eqref{opt: kdspolytope} exactly. A closer look reveals that the proof basically assumes that $k$FW achieves the worst case rate $\bigO(1/t)$ rate in the beginning, and once the iterate is close to $\xsol$, $k$LOO finds the optimal face and $k$DS finds the solution $\xsol$. For a theoretical analysis purpose of lowering the complexity in terms of gradient computation, LOO, and $k$LOO, one can modify the algorithm (assuming knowing the constant $L_f,D,\gamma,\delta$) so it first perform $T$ many iterations of FW with stepsize rule $\bigO(1/t)$; and then perform one $k$LOO and one $k$DS. This algorithm will require $T$ many gradient computation in the first stage and $k$ many LOOs, and one $k$LOO and one $k$DS in the second stage. If APG is employed in solving the problem in $k$DS, one requires $\bigO{\left(\frac{1}{\sqrt{\epsilon}}\right)}$ gradient computation. In our experiments, we found the subproblem is not too hard to solve to high accuracy and employing $k$DS in the beginning significantly reduces the total time. 
\end{rem} 

	Convergence for the spectrahedron and nuclear norm ball
	differs because any neighborhood of $\Xsol$ contains
	infinitely many matrices with rank $\leq \rsol$. The proof appears in
	Section \ref{sec: spectrahedronNuclearNormproofconvergence} in the appendix.
	\begin{thm}  \label{thm: LinearConvergence}
		Instate the assumption of Theorem \ref{thm: nonexponentialFiniteConvergence}.
		Then if the constraint set is the spectrahedron or the unit nuclear norm ball,
		the gap $\delta>0$ and $k$FW satisfies that for any $t\geq T:\,= \frac{72L_f^{3}}{\gamma\delta^2}$,
		\begin{equation}
		\begin{aligned}
		f(X_{t+1}) - f(\Xsol)\leq \left (1-\min\left\{ \frac{\gamma}{4L_f},\frac{\delta}{12L_f}\right \}\right)\left(f(X_t) - f(\Xsol)\right) .
		\end{aligned}
		\end{equation}
	\end{thm}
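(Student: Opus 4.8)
The plan is to mirror the structure of the proof of Theorem~\ref{thm: nonexponentialFiniteConvergence}, but replace the ``find $\Xsol$ exactly'' conclusion with a one-step geometric decrease once the iterate is close to $\Xsol$. First I would use Theorem~\ref{thm: normalFWresult} and quadratic growth exactly as in \eqref{eq: xtclosetooptimalx} to conclude that for $t\geq T = 72L_f^3/(\gamma\delta^2)$ the iterate satisfies $\norm{X_t-\Xsol}\leq \delta/(12L_f)$ (the constant $72$ is chosen precisely so that $\sqrt{L_fD^2/(\gamma T)}\leq \delta/(12L_f)$ when $D$ is bounded as it is for the spectrahedron/nuclear norm ball, $D\leq 2$). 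Then I would show that the $k$LOO step identifies the support $\mathcal{F}(\Xsol)=\range(\Xsol)$: the argument of \eqref{eq: correctExtremePoint} shows that for any unit vector $v$ in $\range(\Xsol)$ and any unit vector $u$ orthogonal to $\range(\Xsol)$, $\inprod{\nabla f(X_t)}{vv^\top} - \inprod{\nabla f(X_t)}{uu^\top} \leq -\delta/2 < 0$, so the top-$k$ eigenvectors of $-\nabla f(X_t)$ span a space containing $\range(\Xsol)$ whenever $k\geq \rsol$. Hence $USV^\top$-type updates range over a set of matrices whose column spaces contain $\range(\Xsol)$.

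Next, the core of the argument: I would bound the progress of the $k$DS step from below. Since $\Xsol$ lies in (or is contained in) the feasible set of the $k$DS subproblem after support identification, $f(X_{t+1})\leq \min_{X\in \mathcal{C}} f(X)$ where $\mathcal{C}$ is the spectral combination set through $X_t$ and the identified directions. I would then lower-bound the decrease by evaluating $f$ along a cleverly chosen feasible curve from $X_t$ toward $\Xsol$ — either the segment $X_t + s(\Xsol - X_t)$ if it is feasible, or a ``two-phase'' curve that first moves mass onto the identified directions and then contracts, as in Frank--Wolfe linear-convergence proofs over the spectrahedron. Using $L_f$-smoothness along this curve gives $f(X_{t+1}) - f(\Xsol) \leq (1 - c)\,(f(X_t) - f(\Xsol))$ with $c$ the product of a step-size factor and a factor from quadratic growth / the gap $\delta$; the two cases in $\min\{\gamma/(4L_f),\,\delta/(12L_f)\}$ should correspond to whether the binding constraint on the step size is the quadratic-growth-controlled distance $\norm{X_t-\Xsol}$ or the normal-cone gap $\delta$ that governs how fast mass transfers off the complementary part.

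I expect the main obstacle to be exactly the construction of the feasible descent curve and the accompanying step-size bookkeeping in the spectral (rather than plain-convex) parametrization $X = \eta W + USV^\top$. Unlike the polytope case, one cannot simply put $\Xsol$ in as a vertex and be done; one must argue that a convex/spectral combination of $X_t$ with the rank-$\leq k$ directions moves toward $\Xsol$ at a controlled rate, which requires decomposing $X_t - \Xsol$ into a part inside $\mathcal{F}(\Xsol)$ (controlled by quadratic growth, contributing the $\gamma/(4L_f)$ term) and a part in the complementary space (whose $f$-value excess is lower-bounded via $\delta$, contributing the $\delta/(12L_f)$ term), and then verifying both can be corrected within one $k$DS step. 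Handling the nuclear norm ball alongside the spectrahedron adds the bookkeeping of the $\nucnorm{S}$ budget and the nonnegativity of $\eta$, but should follow the spectrahedron case after the standard lifting; I would present the spectrahedron argument in detail and indicate the modification for $\mathbf{B}_{\nucnorm{\cdot}}$, deferring both to Section~\ref{sec: spectrahedronNuclearNormproofconvergence}.
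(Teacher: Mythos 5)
Your outer skeleton matches the paper's: use Theorem \ref{thm: normalFWresult} plus quadratic growth to get $X_t$ within $O(\delta/L_f)$ of $\Xsol$ for $t\geq T$, then prove a one-step geometric contraction by combining $L_f$-smoothness along a feasible direction with quadratic growth (the $\gamma/(4L_f)$ branch) and the gap $\delta$ (the $\delta/(12L_f)$ branch). But there are two problems in the middle of your argument. First, the support-identification step is false as stated: the inequality $\inprod{\nabla f(X_t)}{vv^\top}-\inprod{\nabla f(X_t)}{uu^\top}\leq -\delta/2$ for unit $v\in\range(\Xsol)$ and unit $u\perp\range(\Xsol)$ does \emph{not} imply that the bottom $k$ eigenvectors (or top $k$ singular vectors) of $\nabla f(X_t)$ span a subspace containing $\range(\Xsol)$; a Rayleigh-quotient separation between two fixed subspaces only forces the computed eigenspace to be \emph{close} to $\range(\Xsol)$ (Davis--Kahan), never to contain it --- e.g.\ $\bigl(\begin{smallmatrix}0&\epsilon\\ \epsilon&1\end{smallmatrix}\bigr)$ separates $e_1$ from $e_2$ by $1$ yet its bottom eigenvector is not $e_1$ for any $\epsilon\neq 0$. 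Consequently $\Xsol$ is in general \emph{not} feasible for the $k$DS subproblem, which is exactly why the paper treats this case separately from the polytope one (``any neighborhood of $\Xsol$ contains infinitely many matrices with rank $\leq \rsol$'').

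Second, the step you flag as ``the main obstacle'' --- producing a feasible point of the spectral combination set that makes quantifiable progress toward the infeasible $\Xsol$ --- is the actual content of the proof, and your proposal does not supply it. The paper's resolution is Lemma \ref{lem: nucnormLemma}: if $\nabla f(X_t)$ has singular-value gap $\delta'$ at position $\rsol$ (bounded below by $\delta/3$ via Weyl's inequality once $\fronorm{X_t-\Xsol}$ is small), then there exists $S$ with $\nucnorm{S}=1$ such that $W=U_tSV_t^\top$ satisfies $\inprod{\Xsol-W}{\nabla f(X_t)}\geq \tfrac{\delta'}{2}\fronorm{\Xsol-W}^2$. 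Plugging the feasible point $\eta X_t+(1-\eta)W$ into the smoothness upper bound, using convexity to get $\inprod{W-X_t}{\nabla f(X_t)}\leq -\tfrac{\delta}{6}\fronorm{\Xsol-W}^2-h_t$ and quadratic growth to get $\fronorm{X_t-W}^2\leq \tfrac{2}{\gamma}h_t+2\fronorm{\Xsol-W}^2$, one obtains $h_{t+1}\leq(1-\xi+\tfrac{\xi^2L_f}{\gamma})h_t+(\xi^2L_f-\tfrac{\xi\delta}{6})\fronorm{\Xsol-W}^2$ and optimizes over $\xi=1-\eta$; no descent curve or mass-transfer decomposition is needed. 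You would need to state and prove an analogue of this surrogate lemma (the paper derives the nuclear-norm version from the spectrahedron version by dilation) before your sketch becomes a proof.
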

	\vspace{-15pt}
\subsection{A limitation of $k$FW for polytopes and potential fixes}
As stated in Theorem \ref{thm: nonexponentialFiniteConvergence}, $k$FW needs the parameter $k$ to be greater than or equal to the sparsity level 
$\rsol$, which is the number of vertices of the optimal face instead of $1+\dim (\mathcal{F}(\xsol))$,
	the face dimension plus one.
	The number $\rsol$ can be arbitrarily larger than
	the dimension $1+\dim (\mathcal{F}(\xsol))$ for some sets $\Omega$.
	This in turn means that $k$ has to be very large, at least in theory. 
	
Indeed, the following Example \ref{examp: worstcasekfw} shows that 
	if $k$ is only larger than $1+\dim(\mathcal{F}(\xsol))$, but not 
	larger than $\rsol$, $k$FW can behave as bad as standard Frank-Wolfe. 
	The slowdown occurs for the same reason as the zigzagging slowdown in Frank Wolfe:
	if all the vertices selected are nearly linearly dependent, they do not
	necessarily span the whole face, so $\xsol$ may lie far from their convex hull
	even though they lie on the same face as $\xsol$.
\begin{exmp}[A worst case example]\label{examp: worstcasekfw}
Consider the following problem:
	\begin{equation}\label{opt: worstCaseProblem}
		\begin{array}{ll}
			\mbox{minimize} & f(x,y,z):= x^2+y^2+z \\
			\mbox{subject to} & x\in\Omega: = \{(x,y,z)\mid \sqrt{x^2+y^2}\leq 1-z\}.
		\end{array}
	\end{equation}
	The constraint set $\Omega$ is an ice-cream cone. It is easily verified that the origin $(0,0,0)$ is the solution
	and that strict complementarity holds for this problem.
	Now, if we start at $x^0 = (x_0,y_0,z_0) = (0,0,0.1)$ and use FW to solve the problem,
	it can be shown that FW will produce iterates $x^{1},x^2,x^3,\dots,x^t,\dots$ that converge to the solution $(0,0,0)$ with rate $f(x^t)\geq \frac{c}{t}$ for some constant $c$. A numerical demonstration of the slow convergence is shown by the line ``FWSOC'' in Figure \ref{fig:objectivecomparison} for the objective value of the first 30 iterations.
	
	Next consider the constraint set $\Omega_n = \conv((0,0,1),\{\theta_i\}_{i=1}^n)$ where  $\theta_i=(\cos(2\pi i/n),\sin(2\pi i/n),0)$. In other words, $\Omega_n$ is the polytope being a convex hull of the  vertex $(0,0,1)$  and a $n$-polygon on the $x$-$y$ plane. 
	We now consider
	\begin{equation}\label{opt: worstCaseProblempolygon}
		\begin{array}{ll}
			\mbox{minimize} & f(x,y,z):= x^2+y^2+z \\
			\mbox{subject to} & x\in\Omega_n.
		\end{array}
	\end{equation}
	The $n$-polygon on the $x$-$y$ plane approximates the unit disk for large $n$, also see Figure \ref{fig:constraintregion2dnew} for an illustration of this approximation. Hence, the feasible region $\Omega_n$ approximates $\Omega$ for large $n$ (see \ref{fig:worstcaseregion} for a comparison), and we expect FW applied to Problem \eqref{opt: worstCaseProblem} and \eqref{opt: worstCaseProblempolygon} behaves similarly.
	One can verify that the origin is still the solution and that strict complementarity continues to holds for $n \geq 3$.
	Moreover, the strict complementarity
	parameter $\delta_n$ for this problem is exactly the same for all $n\geq 3$.
	The quadratic growth parameter $\gamma$ also stabilizes for all large $n$.
	Now consider
	running $k$FW starting from $x^0 = (x_0,y_0,z_0) = (0,0,0.1)$.
	Then for any $T$ and $k$,
	there is an $N$ such that for all $n>N$,
	the $k$FW iterate $x^{t,k\text{FW}}$ does not stop
	after $T$ iterations and $f(x^{t,k\text{FW}})\geq \frac{c}{2t}$ for all $t \leq T$.
	This is because for each $T$ and $k$, we can increase $n$ so that $\Omega_n$ is close enough 
	to $\Omega$. The closesness in the feasible region make the vertices found by $k$FW tend to be quite close to each other (just like the vertices found by FW), and they fail to form a convex combination of the optimal solution. Hence $x^{t, k \text{FW}}$ will be very close to $x^{t}$ and by our result from previous paragraph, we should have $f(x^{t,k\text{FW}})\geq \frac{c}{2t}$ for all $t \leq T$. A numerical illustration of this fact for the first 30 iterations of FW and $k$FW can be found in Figure \ref{fig:objectivecomparison}.
	In this case, the dimension of
	the optimal face is always $2$, but
	the number of vertices on the optimal face, $\rsol$, grows with $n$ and can be arbitrarily larger than $2$.
	
	\begin{figure}
		\centering
		\includegraphics[width=0.7\linewidth]{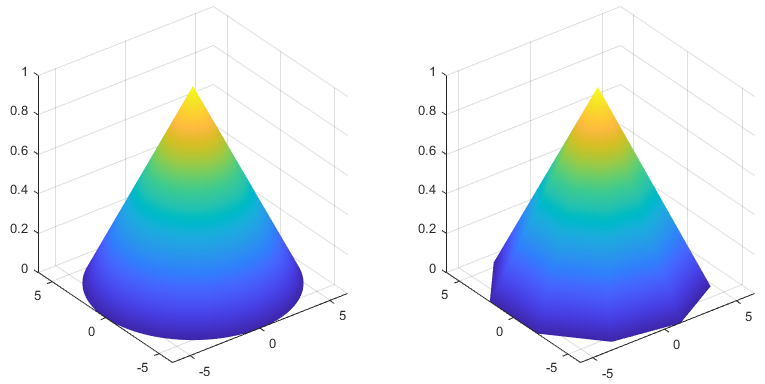}
		\caption{The left plot shows a second order cone in \eqref{opt: worstCaseProblem} . The right plot shows a polyhedron in \eqref{opt: worstCaseProblempolygon} that approximates the second order cone.}
		\label{fig:worstcaseregion}
	\end{figure}
	\begin{figure}
		\centering
		\includegraphics[width=0.7\linewidth]{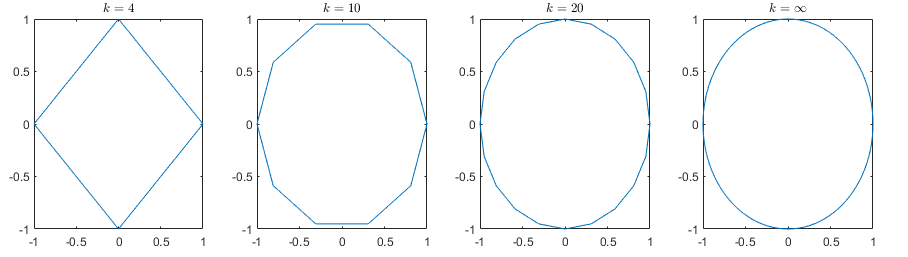}
		\caption{From left to right, we show the $n$-polygon, the bottom of the feasible region \eqref{opt: worstCaseProblempolygon}, that approximates the disk, the bottom of the second order cone.}
		\label{fig:constraintregion2dnew}
	\end{figure}
	\begin{figure}
		\centering
		\includegraphics[width=0.7\linewidth]{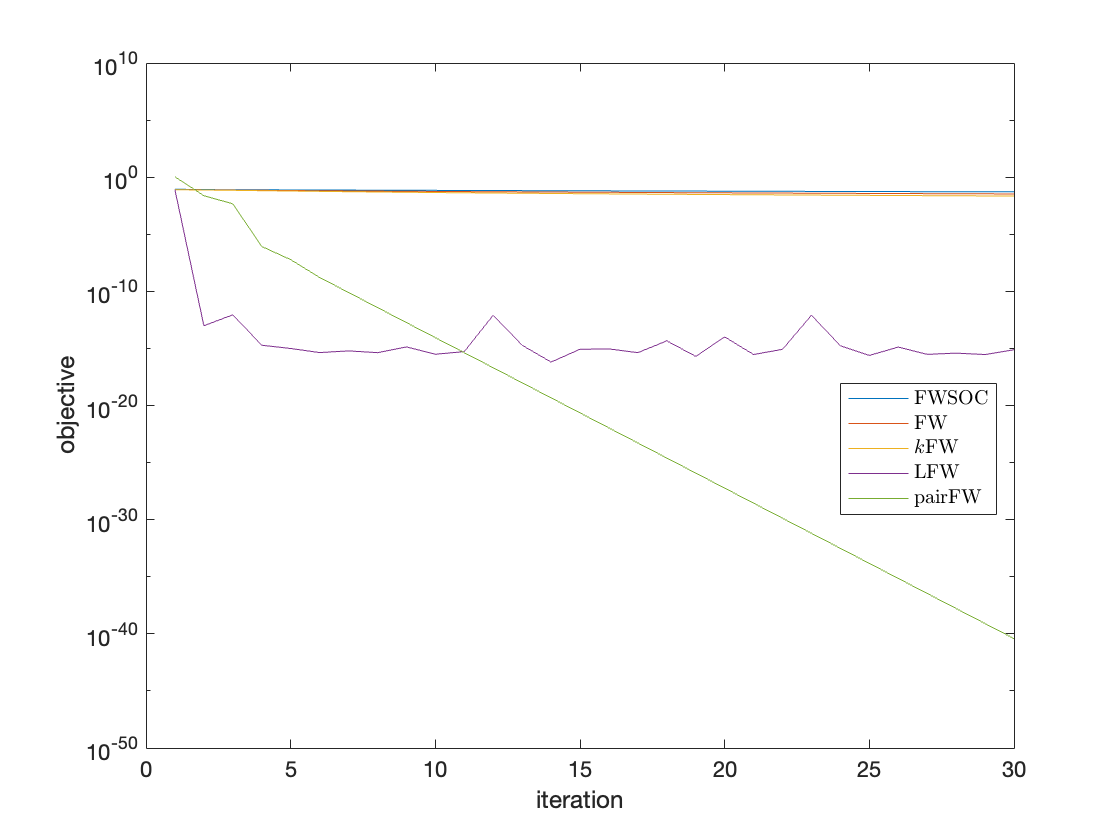}
		\caption{The objective value of different algorithms for \eqref{opt: worstCaseProblem} and \eqref{opt: worstCaseProblempolygon}. FWSOC shows the behavior of FW applied to \eqref{opt: worstCaseProblem}. The other lines are different algorithms applies to \eqref{opt: worstCaseProblempolygon}. LFW stands for limited memory Frank-Wolfe, which
          always keep the most recent $2$ vertices found by LOO, and optimal over the convex hull of the current iterate and these two past vertices. PairFW stands for pairwise FW.}
		\label{fig:objectivecomparison}
	\end{figure}
	
\end{exmp}
	
\paragraph{A quick fix}	We could remedy the problem with a stronger oracle:
	for example, one that outputs vertices that are always linear independent,
	or (even better) an oracle that can output a set of vertices whose
	convex hull contains $\xsol$ whenever the iterate is close to $\xsol$.
	If the oracle can achieve this latter property,
	then $k=1+\dim (\mathcal{F}(\xsol))$ suffices for fast convergence
	by Caratheodory's theorem.
	Hence we avoid the dependence on $\rsol$ defined here.
	These strong oracles exist for some sets, such as
	the simplex, the $\ell_1$ norm ball, and the spanning tree in a graph (in the sense of orthogonality),
	but in general they may be prohibitively expensive to compute.
	
\paragraph{The relation between $\rsol$ and $\dim(\mathcal{F}(\xsol)$} For polytopes encountered in practice,
	the relation between $\rsol$ and the face dimension varies.
	For the probability simplex and the $\ell_1$ norm ball,
	$\rsol=1+\dim (\mathcal{F}(\xsol))$.
	Frank Wolfe is often used to optimize over these sets, and our algorithm
	presents a substantial advantage here.
	For other types of polytope, the dependence of $\rsol$ on
	the face dimension can be polynomial or exponential,
	e.g., products of simplices and Birkhoff polytopes.
	
\paragraph{Our responses to the limitation}	The limitation of $k\geq \rsol$ instead of $k\geq \dim(\mathcal{F}(\xsol))+1$ of 
	our theory need not spell disaster in practice:
	small $k$ can still work.
	First, if $\xsol$ is a vertex or $\xsol$ lies on a 1 dimensional line 
	segment, then $\rsol = 2$ suffices.
	More generally, if $\xsol$ lies on a face with dimension independent of the ambient
	dimension $n$,
	then $\rsol$ is independent of $n$ even if it is exponential in $\dim (\mathcal{F}(\xsol))$.
	Second, even if $\rsol$ is polynomial or exponential in the dimension of the face $\mathcal{F}(\xsol)$,
	it is sometimes still easy to solve the $k$LOO and $k$DS subproblem:
	for example, these subproblem are still easy for
	the simplex, a product of simplices, the $\ell_1$ norm ball, or a product of $\ell_1$ norm balls. See Remark \ref{rmk: psimp} for a detailed discussion.
	Finally, as shown by the following example, we found that in numerics, a modified version of $k$FW which incorporates limited past information can limit the choice of $k$ to $\bigO(\dim(\mathcal{F}(\xsol)))$ even though 
	the vanilla version may fail. 
\begin{exmp}
	We consider the problem of projection to the hypercube $[0,1]^n$: 
		\begin{equation}\label{opt: hypercube}
		\begin{array}{ll}
			\mbox{minimize} & f(x):= \twonorm{x-x_0}^2 \\
			\mbox{subject to} & x\in [0,1]^n.
		\end{array}
	\end{equation}
We perform experiments with $n=50$ and set the first $10$ coordinates of $x_0$ to be uniformly chosen from $[0,1]$, 
and set the rest of the coordinates to be $2$. This choice of $x_0$ ensures the strict complementarity condition is satisfied. 
We compute the optimal solution of \eqref{opt: hypercube} via Sedumi \cite{sturm1999using} and found it is has $40$ ones and 
	all the other entries are in $(0,1)$. Note this face has $2^10$ many vertices though. 
	Hence the optimal face is $\mathcal{F}(\xsol)$ is $10$ dimensional. We set $k=10$ and we try 
	(1) FW, (2) $k$FW with $k= 11$, (3) $k$FW with limited memory (L$k$FW), that is, we also keep the most recent $k-1$ vertices found by LOO
	in the past $k-1$ iterations, and then add them together with the output of $k$LOO into $2k-1$-DS.  (4) FW with limited 
	past information (LFW), which call LOO once 
	in every iteration, but keep the most recent $k-1$ vertices found by LOO in the past $k-1$ iterations. The results of the objective 
	value against the iteration is shown in Figure \ref{fig:hypercube}. It can be seen that once past information is incorporated, L$k$FW is able
	to find the optimal solution in very few iterations while the vanilla $k$FW behaves similarly to FW. It is also interesting LFW itself 
	is as fast as L$k$FW for this problem.
\end{exmp}
\begin{figure}
		\centering
		\includegraphics[width=0.7\linewidth]{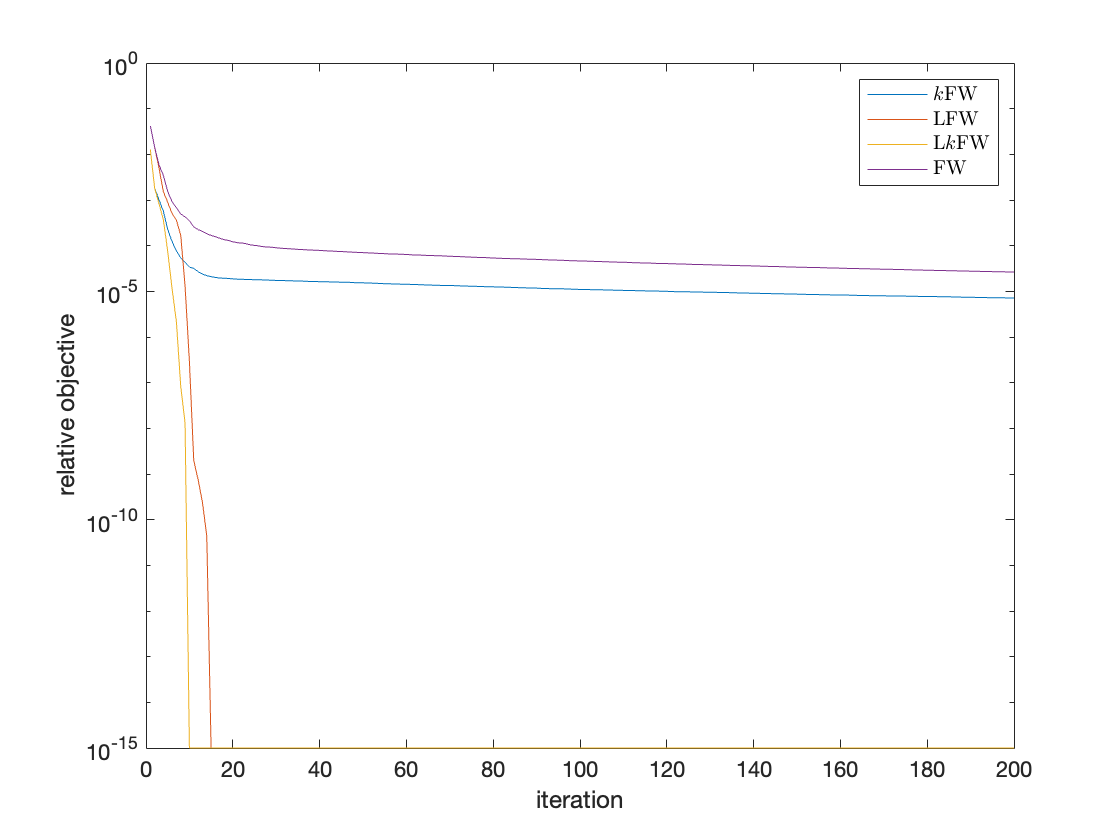}
		\caption{Relative objective $\frac{f(x)-f(x_\star)}{f(x_\star)}$ for the hypercube problem \eqref{opt: hypercube} of different algorithms.}
		\label{fig:hypercube}
\end{figure}

	\section{Numerics} \label{sec: numerics}
	In this section, we perform experiments to see the emprical behavior of $k$FW. We first start with synthetic datasets, where we set $k$ to be the ground truth of the sparsity measure. Next, we experiment on real data, we determine $k$ according to the expected sparsity level of the data (e.g. the expected number of support vectors in SVM).
	
	\subsection{Synthetic data}
	We compare our method $k$FW with
	FW, away-step FW (awayFW) \cite{guelat1986some}, pairwise FW (pairFW)\cite{lacoste2015global},
	DICG \cite{garber2016linear}, and blockFW \cite{allen2017linear}
	for the Lasso, support vector machine (SVM), group Lasso, and matrix completion problems
	on synthetic data.
	Details about experimental settings appear in the Appendix \ref{sec: numericalSectionSettingAppendix}.
	All algorithms terminate when the relative change of the objective
	is less than $10^{-6}$ or after $1000$ iterations.
	As shown in Figure \ref{fig: Figure_compare},
	$k$FW converges in many fewer iterations than other methods.
	Table \ref{table: Table_time} shows that $k$FW also converges faster
	in wall-clock time,
	with one exception (blockFW in matrix completion).
	Note that blockFW is sensitive to the step size while $k$FW has no step size to tune.
	More numerics can be found in Appendix \ref{sec: numericalSectionSettingAppendix}.
	\begin{figure}[H]\label{Figure_compare}
		\hspace{-15pt}
		\begin{subfigure}[(a)]{.24\textwidth}
			\centering
			\includegraphics[width=1.1\linewidth]{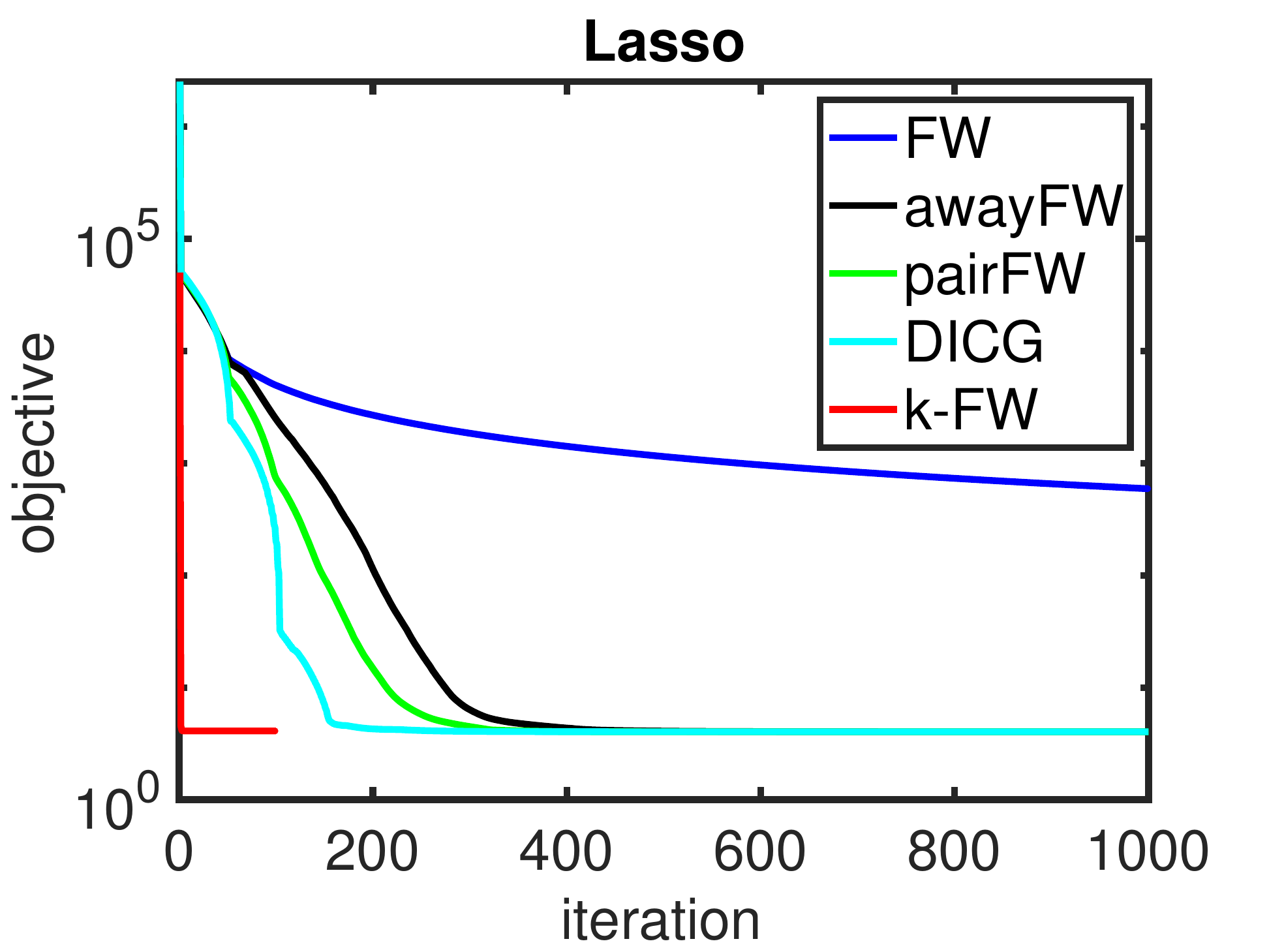}
			\label{fig:fig_lasso}
		\end{subfigure}%
		\hspace{5pt}
		\begin{subfigure}[(b)]{.24\textwidth}
			\centering
			\includegraphics[width=1.1\linewidth]{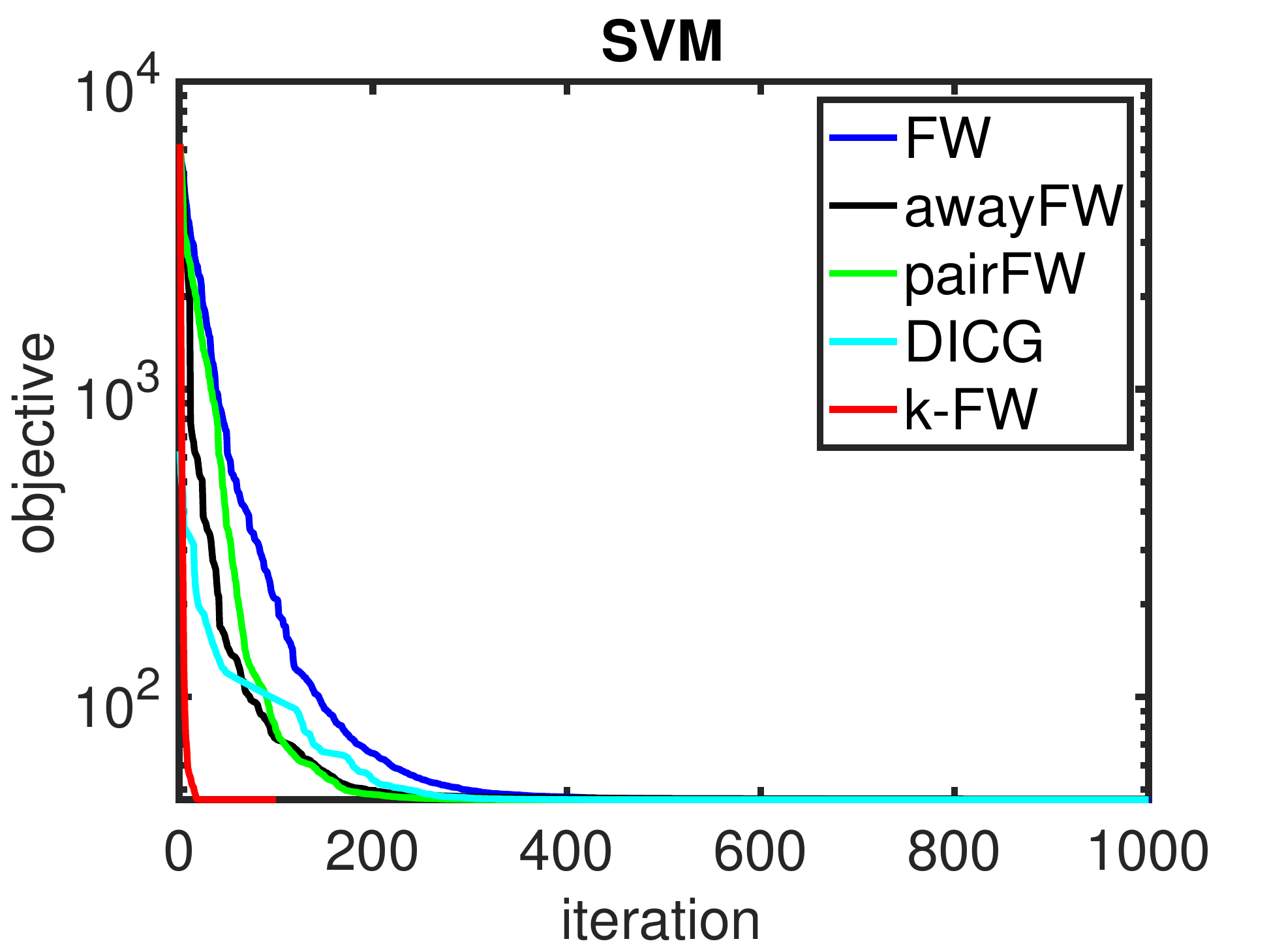}
			\label{fig:fig_SVM}
		\end{subfigure}
		\hspace{3pt}
		\begin{subfigure}[(b)]{.24\textwidth}
			\includegraphics[width=1.1\linewidth]{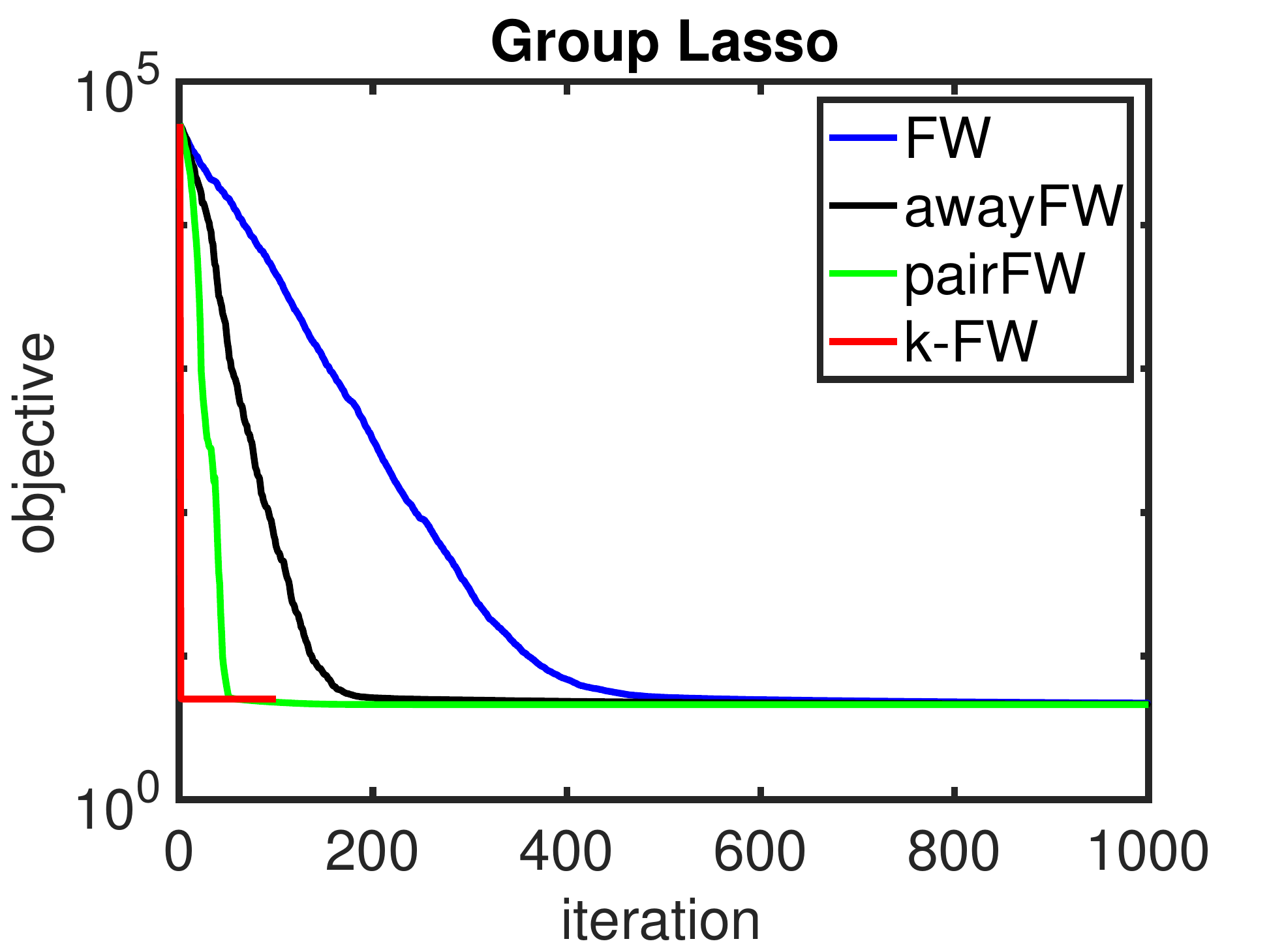}
			\label{fig:fig_grouplasso}
		\end{subfigure}
		\hspace{3pt}
		\begin{subfigure}[(b)]{.24\textwidth}
			\centering
			\includegraphics[width=1.1\linewidth]{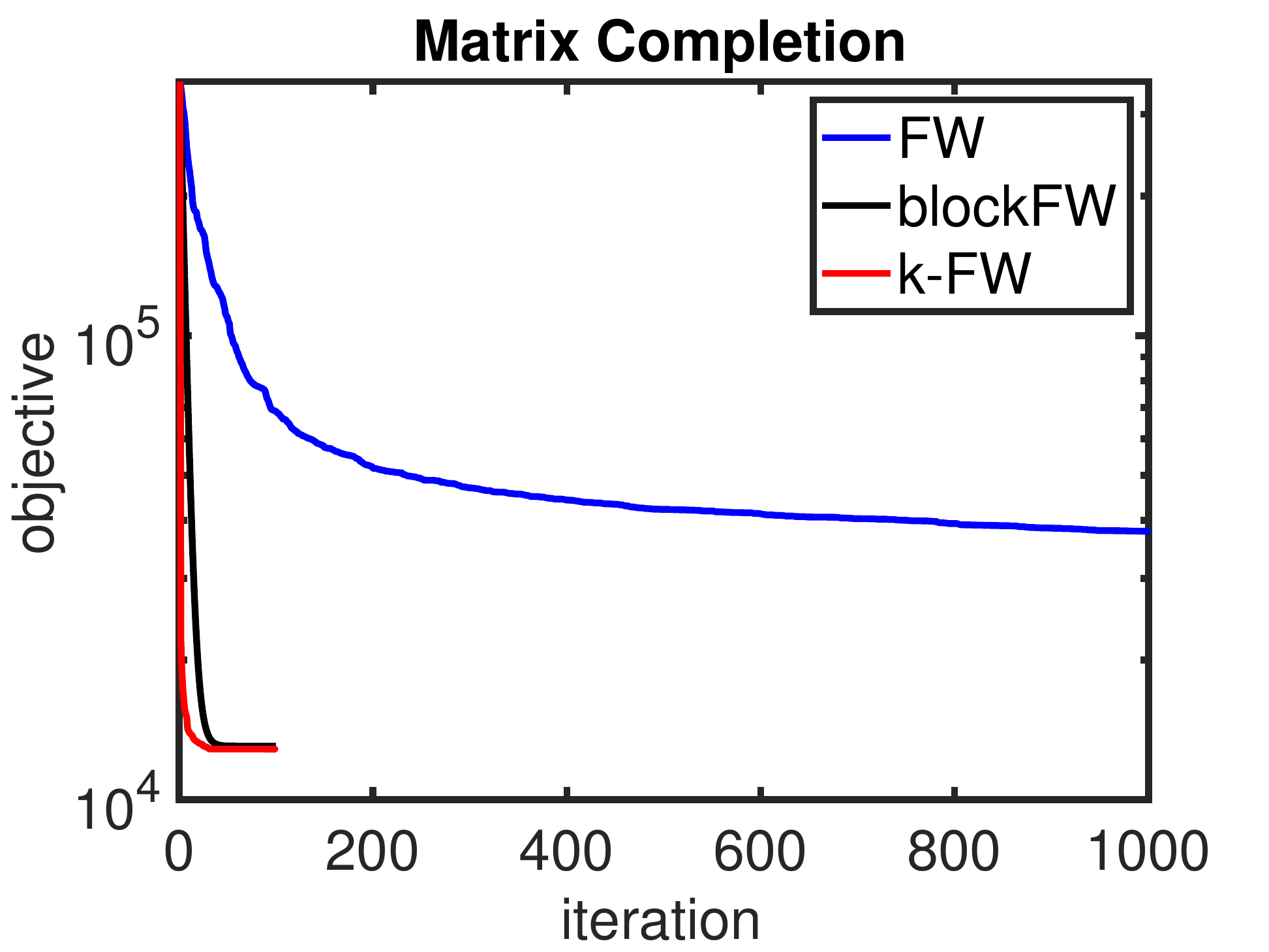}
			\label{fig:fig_MC}
		\end{subfigure}
		\vspace{-15pt}
		\caption{$k$FW vs. FW and its variants }\label{fig: Figure_compare}
		\vspace{-10pt}
	\end{figure}
	\begin{table}[H]\label{Table_time}
		\centering
		\caption{Computation time (seconds):
			the algorithms terminate when the relative change of the objective $<10^{-6}$
			or after $1000$ iterations.
			The dash - means
			the algorithm is not 
			suited to the problem.}\label{table: Table_time}
		\begin{tabular}{lcccccc}
			\hline
			& FW        & awayFW & pairFW & DICG  & blockFW & $k$FW \\
			\hline
			Lasso             &  $>$14    & 7      & 6      & 10    & -       & \textbf{0.5} \\
			SVM               & 6         & 4.5    &  2.9     & 2.5   & -       & \textbf{0.6} \\
			Group Lasso       & 17        & 6      & 1.8   & -     & -       & \textbf{0.3} \\
			Matrix completion & $>$180    & -      &  -     & -     & 1.8     & 4.8 \\
			\hline
		\end{tabular}
	\end{table}
	\paragraph{Time of $k$LOO and $k$DS}
	 {In the experiments, the time cost ratios $k$LOO:$k$DS are approximately: Lasso 0.6:1, SVM 0.03:1, Group Lasso 0.2:1, MC 0.1:1.} \newcontent{The time of $k$FW spends on $k$DS
	occupies a major fraction of the total time. However, the spend is worthwhile as the number of iteration is extremely reduced indicated by our experiments.}

\subsection{Real data}
First, we randomly choose 5000 samples of each digit of the MNIST \cite{lecun1998gradient} dataset to form a dictionary $A\in\mathbb{R}^{784\times 50000}$. Given an image $b$ from the rest of the dataset, we add Gaussian noise (zero mean and 0.1 variance) to it (denoted by $\bar{b}$) and use sparse coding to denoise, i.e.  $\hat{x}=\textup{argmin}_x\Vert Ax-\bar{b}\Vert^2$ subject to $\Vert x\Vert\leq 2$. In $k$FW, we set $k=50$. In every algorithm, the optimization is terminated if the relative change of the objective function is less than $10^{-4}$ or the iteration number reaches 500. The recovered image is  $\hat{b}=A\hat{x}$. The recovery error is defined as $\textup{RE}=\Vert \hat{b}-b\Vert/\Vert b\Vert$. Table \ref{tab_mnist_denoise} shows three examples. We see that $k$FW is significantly faster than other methods in all cases and the recovery error of $k$FW is much lower than DICG. Figure \ref{fig_dn_027} shows some examples intuitively.

\begin{table}[h!]
\centering
\caption{Examples of denoising on MNIST (TC: time cost (second); RE: recovery error).}
\label{tab_mnist_denoise}
\begin{tabular}{ccccccc}\hline
digit &metric & FW        & awayFW & pairFW & DICG   & $k$FW \\ \hline
\multirow{2}{*}{0} & TC &18.2	&19.1	&18.9 &4.0	&\textbf{1.6} \\ 
& RE & \textbf{0.2634}	& 0.2641	& 0.2645	&0.2665	&0.2639\\ \hline

\multirow{2}{*}{1} & TC &18.7&18.1	&18.2 &5.7	&\textbf{3.2} \\ 
& RE & 0.3725&	0.3764&	0.3731&	0.4010	&\textbf{0.3632}\\ \hline

\multirow{2}{*}{2} & TC &18.3&18.4	&18.1 &6.1	&\textbf{2.3} \\ 
& RE & 0.3272&	0.3281&	0.3267&	0.3383	&\textbf{0.3258}\\ \hline

\multirow{2}{*}{3} & TC &18.2	&18.3	&18.1 &6.1	&\textbf{2.4} \\ 
& RE & \textbf{0.2577}&	0.2607&	0.2587&	0.2653	&0.2581\\ \hline

\multirow{2}{*}{4} & TC &18.4	&18.2	&18.1 &5.3		&\textbf{4.0} \\ 
& RE & \textbf{0.3240}&	0.3273&	0.3263&	0.3256	&0.3249\\ \hline

\multirow{2}{*}{5} & TC &18.7	&18.3	&18.3 &6.5		&\textbf{2.8} \\ 
& RE & 0.3136&	0.3132&	0.3134&	0.3264&	\textbf{0.3110}\\ \hline

\multirow{2}{*}{6} & TC &18.6	&18.4	&18.6	&6.1	&\textbf{3.9} \\ 
& RE & \textbf{0.2772}&	0.2792&	0.2800&	0.2890&	\textbf{0.2772}\\ \hline

\multirow{2}{*}{7} & TC &18.2	&18.2	&18.4	&6.6	&\textbf{3.0} \\ 
& RE & 0.3315&	0.3297&	0.3301&	0.3296&	\textbf{0.3237}\\ \hline

\multirow{2}{*}{8} & TC &17.4	&17.6	&18.4	&2.8	&\textbf{2.3} \\ 
& RE & 0.3072&	\textbf{0.3066}&	0.3072&	0.3555&	0.3069\\ \hline

\multirow{2}{*}{9} & TC &18.3	&18.0	&18.0	&7.6	&\textbf{2.2} \\ 
& RE & 0.3575&	0.3598&	0.3573&	0.3618&	\textbf{0.3535}\\ \hline
\end{tabular}
\end{table}

\begin{figure}
\centering
\includegraphics[width=17cm,trim={120 30 90 10},clip]{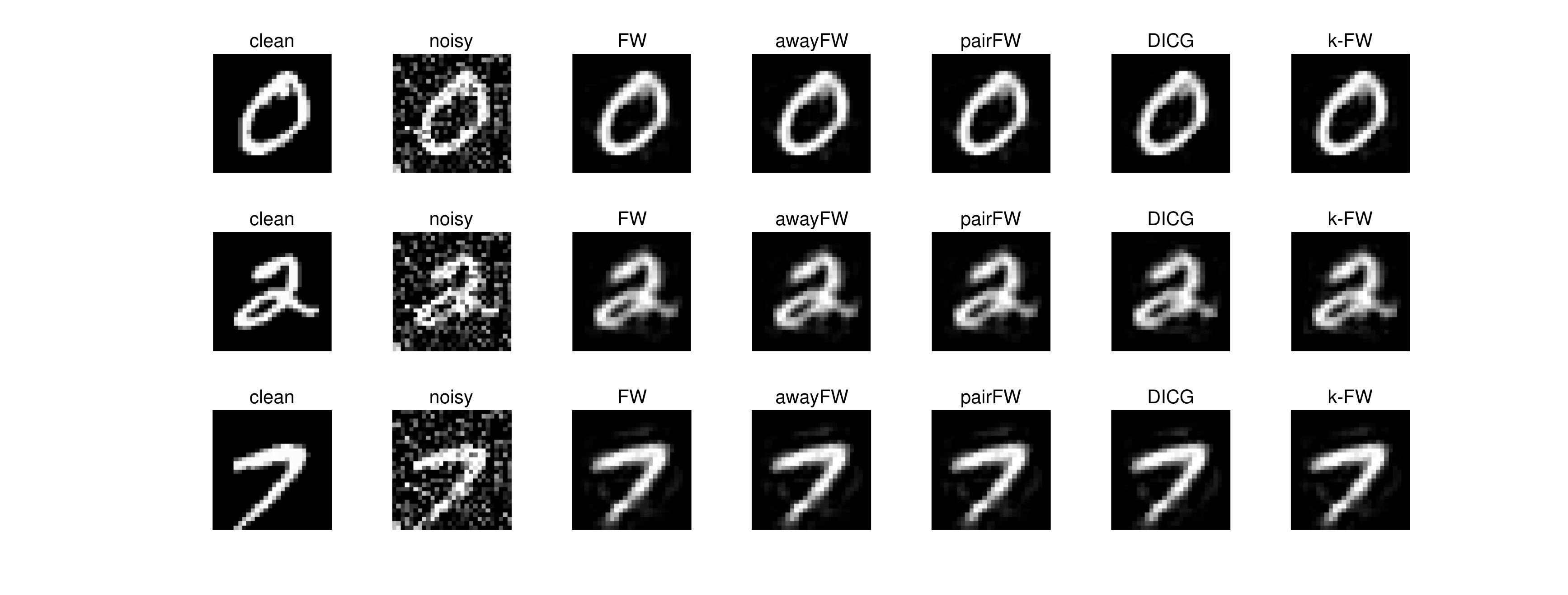}
\caption{}\label{fig_dn_027}
\end{figure}

Second, we consider the SVM classification task on the MNIST dataset. We randomly choose 5000 samples of digit ``0" and 5000 samples of digit ``6". The training-testing ratio is 8:2. In $k$FW, we set $k=50$. The time cost and classification accuracy (average of 10 trials) in the condition of different number of iterations are reported in Table \ref{tab_mnist_class}. With 50 iterations, the SVM solved by $k$FW achieved a classification accuracy of 0.9934 while the accuracies of SVM solved by other algorithms are lower than 0.9. In general, the results in Table \ref{tab_mnist_class} indicate that $k$FW is much more efficient than other algorithms in solving the optimization of SVM.
\begin{table}[h!]
\centering
\caption{SVM (with second-order polynomial kernel, $C=10$, and $\lambda=0.1$) classification for digits ``0" and ``6" of MNIST (TC: time cost (second); Acc: classification accuracy).}
\label{tab_mnist_class}
\begin{tabular}{ccccccc}\hline
iterations &metric & FW        & awayFW & pairFW & DICG   & $k$FW \\ \hline
\multirow{2}{*}{10} & TC &6.6&7.5	&3.8	&3.1	&\textbf{1.2} \\ 
& Acc & 0.5094	& 0.6514	& 0.5691	&0.6650	&\textbf{0.8199}\\ \hline
\multirow{2}{*}{50} & TC &44.0	&48.6	&19.6	&14.9	&\textbf{5.6} \\ 
& Acc & 0.8331&	0.8364&	0.7997&	0.8728&	\textbf{0.9934}\\ \hline
\multirow{2}{*}{200} & TC &239.0	&261.1	&90.1	&60.9	&\textbf{22.6} \\ 
& Acc & 0.9236&	0.9852&	0.9503&	0.9915&	\textbf{0.9966}\\ \hline
\multirow{2}{*}{500} & TC &722.3	&743.5	&265.1	&155.4	&\textbf{55.7} \\ 
& Acc & 0.9834&	0.9931&	0.9916&	0.9957&	\textbf{0.9962}\\ \hline
\end{tabular}
\end{table}

Finally, we consider an inpainting problem for the gray-scale image shown in Figure \ref{fig_inpainting}. We randomly remove $50\%$ of the pixels. Since the image matrix $X_{\text{org}}$ is approximately low-rank, we use $\Vert {X}\Vert_{\text{nuc}}\leq 0.8\alpha$, where $\alpha$ is the value of the nuclear norm of $X_{\text{org}}$. In blockFW and $k$FW, we set $=5$. kIn Figure \ref{fig_inpainting}, we see that blockFW with $\eta=0.1$ outperformed our $k$FW slightly in terms of PSNR. In addition,  the time cost of $k$FW is 2.5 times of blockFW. However, blockFW requires a well determined step size $\eta$.

\begin{figure}[h!]
\includegraphics[width=18cm,trim={50 50 50 50},clip]{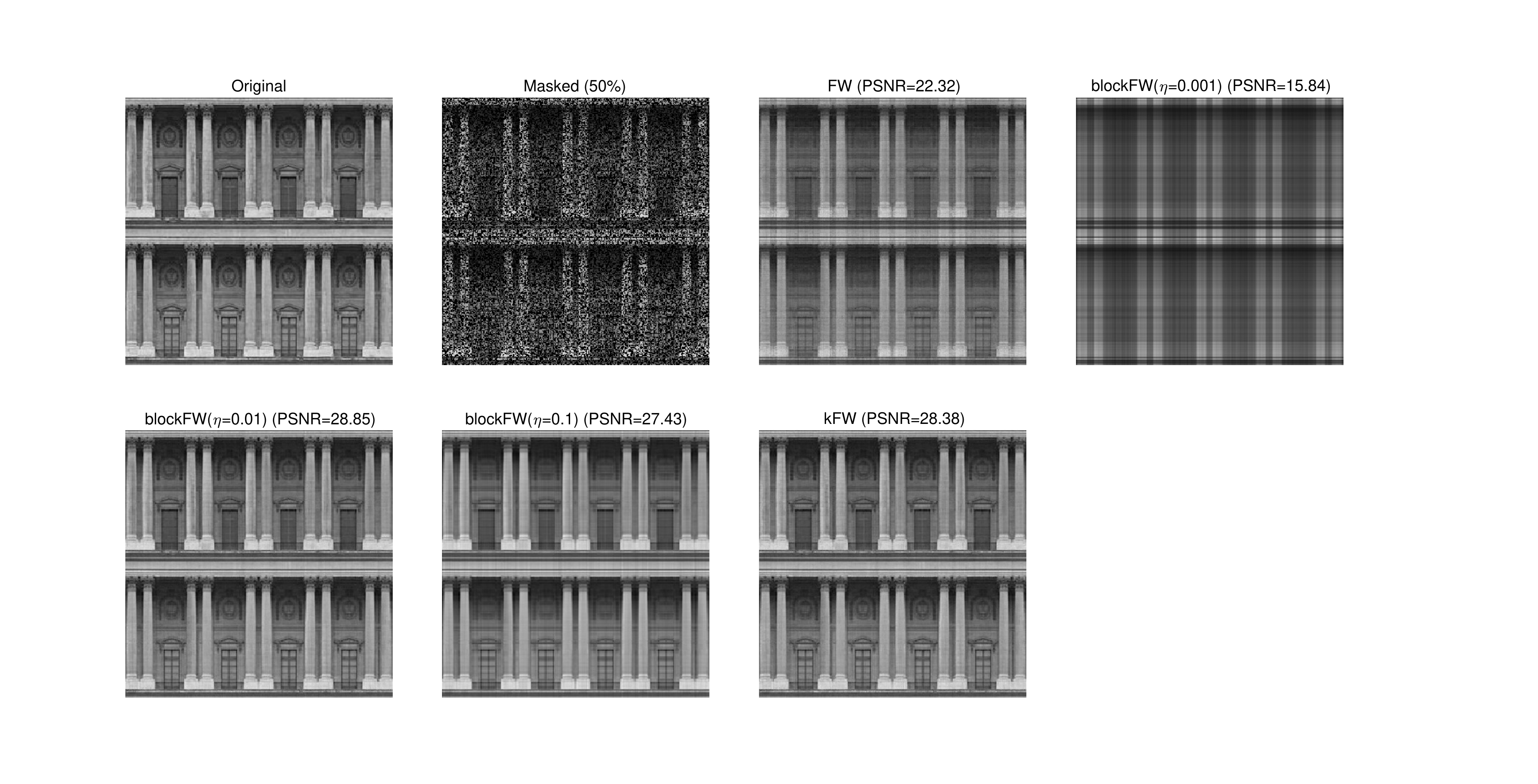}
\caption{Image inpainting by matrix completion with different solvers.}\label{fig_inpainting}
\end{figure}

	\section{Conclusion and discussion}\label{sec: discussion}
	This paper presented a new variant of FW, $k$FW, that takes advantage
	of sparse structure in problem solutions
	to offer much faster convergence than other variants of FW,
	both in theory and in practice.
	$k$FW avoids the Zigzag phenomenon by optimizing over a convex combination of
	the previous iterate and
	$k$ extreme points of the constraint set, rather than one, at each iteration.
	The method relies on the ability to efficiently compute
	these $k$ extreme points ($k$LOO) and to compute the update ($k$DS),
	which we demonstrate for a variety of interesting problems.

	\newcontent{Apart from the algorithmic advance of the introduction of $k$LOO and $k$DS for various settings, theoretically,  a more uniform, geometric definition of strict complementarity that unifies and extends previous work \cite{DingFeiXuYang2020,garber2019linear,garber2020revisiting}, and allows us to handle a wide range of problems in a coherent framework.}
	
	\paragraph{Related work and comparison}
	\newcontent{A recent line of work \cite{DingFeiXuYang2020,garber2019linear,garber2020revisiting,diakonikolas2020locally,carderera2021parameter} utilizes the concept of 
		strict complementarity or the local geometry of \eqref{opt: mainProblem} near $\xsol$ to show faster convergence when the iterate is near the solution. \cite{garber2019linear} studies vanilla FW for spectrahedron with rank one solution and 
		\cite{DingFeiXuYang2020} shows how to deal with general rank by utilizing $k$LOO and $k$DS (specFW in their language). The work \cite{garber2020revisiting} revisits away-FW and show the method achieves better local convergence rate. In \cite{diakonikolas2020locally,carderera2021parameter}, the authors tries to accelerate away-FW when the iterate is close to the solution for polytope constraint.}

\newcontent{Comparably, these past works are rather specific, in particular, strict complementarity is defined specific to each setting rather than in a uniform way. Nevertheless, the present work is inspired from \cite{DingFeiXuYang2020} and the contribution of the present work is to distill and generalize the ideas there to various settings such as polytope, group norm ball, and nuclear norm ball. In particular, the extension to the nuclear norm from the spectrahedron is important for several reasons: (i) The nuclear norm ball (NNB) formulation is the natural problem form for rectangular matrix recovery problems; (ii) To apply the spectrahedron formulation to NNB formulation would require dilation, which doubles the number of variables. Moreover, a quadratic growth objective does not have quadratic growth after dilation, so existing theory for the spectrahedral case does not apply; (iii) Technically, our analysis is similar to \cite{DingFeiXuYang2020} but introduces several novel elements; note in particular that the SC defined in the present paper generalizes that in \cite{DingFeiXuYang2020}.}
		
\newcontent{The idea of utilizing multiple directions instead of just one is rooted in fully-corrective FW and related variants. It is also explored in recent works such as \cite{allen2017linear,bomze2020active}. \cite{allen2017linear} deals with nuclear norm ball and computes mulitple singular vectors in each iteration in order to make a gradient step. Note that even though \cite{allen2017linear} considers computing $k$ singular vectors, the $k$LOO is not based on the gradient but rather primal iterate - gradient, which may induces some computation difficulty due to the higher rank of iterates. More importantly, it may not converge for $k<r_\star$ as shown in \cite[Figure 1]{allen2017linear}, while kFW converges always as shown in Theorem \ref{thm: normalFWresult}. The work \cite{bomze2020active} considers how to identify the 
vertices on the optimal face via away-FW, however, the result is limited to probability simplex.}

\newcontent{\paragraph{Future work} We expect the core ideas that undergird $k$FW can be generalized
to a wide variety of atomic sets in addition to those
considered in this paper. We also expect the idea of $k$DS and a limited memory Frank-Wolfe, which uses most recent $k$ points found by LOO, 
can still succeed for polytopes with $\rsol$ much larger than the dimension of the optimal face.   
}

  \section*{Acknowledgements}
  This work was supported in part by
NSF Awards IIS-1943131 
and CCF-1740822, 
the ONR Young Investigator Program, %
DARPA Award FA8750-17-2-0101, 
the Simons Institute, 
and Capital One. We would like to thank
Billy Jin and Song Zhou for helpful discussions.

	\bibliography{reference}
	\bibliographystyle{alpha}

	\appendix
	\section{Table and Procedures for Section \ref{sec: subproblems}}\label{sec: AppendixSubproblems}
	\subsection{$k$LOO and $k$DS for Spectrahedron}\label{sec: AppendixSubproblemsSpectrahedron}

	We define $k$LOO and $k$DS for
	the spectrahedron $\mathcal{SP}^\dm = \{X\in \symMat^{\dm} \mid X\succeq 0,\,\tr(X)=1\}$
	in this section.

	\myparagraph{$k$LOO} Given an input matrix $Y\in \symMat^\dm$, define the $k$ best directions
	of the linearized objective $\min_{V\in \mathcal{SP}_\dm}\inprod{V}{Y}$
	as the bottom $k$ eigenvectors of $Y$,
	the eigenvectors corresponding to the $k$ smallest eigenvalues.
	Call these vectors $v_1,\dots,v_n$ and collect the output as $V=[v_1,\dots,v_k] \in \real^{\dm \times k}$.

	\myparagraph{$k$DS} Take as inputs $W\in \mathcal{SP}^n$ and $V=[v_1,\dots,v_k]\in \real^{n\times r}$ with orthonormal columns.
	Instead of convex combinations of $W$ and $v_iv_i^\top$,
	we consider a spectral variant inspired by \cite{helmberg2000spectral}:
	\[
	X = \eta W+ VS V^\top \quad \text{where} \quad
	\eta \geq 0,~ S \in \symMat_+^k,~\eta + \tr(S)=1.
	\]
	We minimize the objective $f(X)$ over this constraint set to obtain the solution $X_{k\text{DS}}$ to $k$DS:
	\[
	\mbox{minimize} \quad f(\eta W+ V S V^\top)\quad\mbox{subject to}\quad{ \eta \geq 0,~ S \in \symMat_+^k, ~\text{and}~\eta + \tr(S)=1}.
	\]
	Again, we use APG to solve this problem.
	Projection onto the constraint set requires eigenvalue decomposition (EVD)
	of a $k^2$ matrix, which is tolerable for small $k$.
	(See more detail in Section \ref{sec: projSpechedronAndNuclearNorm})
	\subsection{$k$LOO of combinatorical optimization}\label{sec: tabelCombkLOO}
	In this section, we present Table \ref{table: kbestSolution} of the computational complexity of finding the
	$k$ best solution for combinatorical optimizations. In our setting, the $k$ best
	solution corresponds to the $k$ best directions of $k$LOO. We then point out
	those $k$LOO that can be efficiently computed.

Let us first look at Table \ref{table: kbestSolution} for the complexity of LOO and $k$LOO.
\begin{table}[H]
	\caption{The time complexity of $k$LOO for different combinatorical problems.
		The matroid $M=(E,\mathcal{I})$ consists of the ground set $E$ with $n$ elements and the set of bases $\mathcal{I}$.
		The polytope is the convex hull of all bases in $[0,1]^n$.
		The quantity $\alpha$ is the complexity of checking independence of a set. Here $r(M)$ is the rank of the matroid $M$.
		The $s-t$ cut is for a directed graph with $n$ nodes, $m$  edges, a source node $s$, and a sink node $t$. For each $s-t$ cut, a partition $S,S^c$ of the vertex set with $s\in S$ and $t\in S^c$, we define its cut point as
		a vector in $[-1,1]^{m}$ that has entry $1$ for an edge from $S$ to $S^c$, and an entry $-1$ for an edge from $S^c$ to $S$. The $s-t$ cut polytope is the convex hull of all cut points in $[-1,1]^m$.
		The path polytope considers all simple
		path from $s$ to $t$ for a directed acyclic graph with $n$ nodes and $m$ edges. The polytope is then the convex hull of all
		simple path point in $[0,1]^m$. For an undirected graph with $n$ nodes and $m$ edges, the spanning tree polytope is the convex hull of all spanning tree in $[0,1]^m$.
	}\label{table: kbestSolution}
	\centering
	\begin{tabular}{|c|c|c|}
		\hline
		Polytope name & LOO complexity & $k$LOO complexity\\
		\hline
		Probability simplex & $\bigO(n)$ & $\bigO(n+k)$ \cite{martinez2001optimal} \\
		Polytope of bases of a matroid $M$ & $\bigO(n\log n,n\alpha)$ & $\bigO(n\log n +knr(M)\alpha)$ \cite{hamacher1985k} \\
		The Birkhoff polytope & $\bigO(n^3)$ & $\bigO(kn^3)$ \cite{murthy1968algorithm}\\
		$s-t$ Cut Polytope (Directed Graph)& $\bigO(nm\log n)$ & $\bigO(kn^4)$ \cite{hamacher1985k}\\
		$s-t$ path Polytope(DAG) & $\bigO(m+n\log n)$ &  $\bigO(m+n\log n +kn)$  \cite{eppstein1998finding}\\
		Spanning tree Polytope& $\bigO(m+n\log n)$  & $\bigO(m\log n + k\min(n,k)^{1/2})$ \cite{eppstein1990finding}\\
		\hline
	\end{tabular}
\end{table}

	Let us now list other polytopes with efficient $k$LOO with the assumption that $k\leq n$:
	\begin{itemize}
		\item The $\ell_1$ norm ball $\{x\in \real^\dm \mid \sum_{i=1}^n |x_i| \leq \alpha \}$ admits
		a $k$LOO with time complexity $\bigO(n+k)$ by simply considering finding the $k$ largest elements among $2n$ elements.
		\item The spanning tree polytope of an undirected graph $G(V,E)$ in $\real ^{|E|}$ admits
		a $k$LOO with time complexity $\bigO(m\log n + k^2)$,
		where $m =|E|$ and $n = |V|$ \cite{eppstein1990finding}.
		\item The Birkhoff polytope, the convex hull of permutation matrices in $\real^{\dm \times \dm}$,
		admits a $k$LOO with time complexity $\bigO(kn^3)$ \cite{murthy1968algorithm}
		\item The path polytope of a directed acyclic graph $G(V,E)$ in $\real ^{|E|}$ admits
		a $k$LOO with time complexity $\bigO(m+n\log n +k\min(n,k)^{1/2})$, where $m =|E|$ and $n = |V|$ \cite{eppstein1998finding}.
	\end{itemize}
	Optimization over the probability simplex is useful for fitting support vector machines \cite[Problem (24)]{clarkson2010coresets}.
	The $\ell_1$ norm ball plays a key role in sparse signal recovery \cite{chen2001atomic}.
	The path polytope appears in applications in video-image co-localization \cite{joulin2014efficient}.

	\subsection{Examples of $k$LOO and $k$DS}
	This section presents Table \ref{tb: table: kfwkLMOexample}, which presents examples of efficiently-computable $k$LOO,
  and \ref{table: kfwkKsearchexample}, which presents examples of efficiently-computable $k$DS.
	\begin{table}[tb]
		\caption{$k$LOO examples: The input is a vector $y$ for the polytope and
			unit group norm ball (with base norm $\ell_2$ norm), and a matrix $Y$ for the spectrahedron and unit nuclear norm ball.}
		\label{tb: table: kfwkLMOexample}
		\centering
		\begin{tabular}{lll}
			\toprule
			Name     & $k$ best direction and output & $k$LOO cost \\
			\midrule
			Polytope   &  $k$ extreme points $v_i$s with $k$ smallest &  See Table \ref{table: kbestSolution}  \\
			& $\inprod{v}{y}$ among all extreme points $v$ &  \\
			Unit group & $k$ groups $v_1,\dots,v_k\in \mathcal{G}$ of  & $\bigO((\sum_{i=1}^k|v_i|)+k\log{k})$ \\
			norm  ball & the largest $\ell_2$ norm of $y$  & \\
			Spectral simplex & bottom $k$ eigenvector $v_i$s of $Y$, & Computing  \\
			&  output  $V=[v_1,\dots,v_k]$  & bottom $k$ eigenvectors\\
			Unit nuclear  & top $k$ left, right singular vectors  $(u_i,v_i)$ of $Y$, &  Computing  \\
			norm Ball	&   output $U=[u_1,\dots,u_k]$, $V=[v_1,\dots,v_k]$. &  top $k$ singular vectors\\
			\bottomrule
		\end{tabular}
		\vspace{5pt}
		\caption{$k$ direction search examples.
			We present the parametrization of the vector $x$ or matrix $X$ in the second column.
			The $k$DS optimization problem is to minimize $f(x)$ or $f(X)$ over the parametrization.
			The input is a vector $w$ or a matrix $W$ in $\Omega$ and another of the form output by $k$LOO.
		}
		\label{table: kfwkKsearchexample}
		\centering
		\begin{tabular}{lllll}
			\toprule
			Name     & Parametrization & Parameter & Parameter  & Main cost of\\
			& of $x$ or $X$ & variable & constraint (p.r.) & proj to p.r.\\
			\midrule
			Polytope & $\eta w +\sum_{i=1}^k \lambda_i v_i $  &  $(\eta,\lambda)\in \real^{k+1}$   & $(\eta,\lambda)\in \Delta^{k+1}$  & $\bigO(k\log(k))$ \\
			Unit Group  &  $\eta w + \lambda^{v_1,\dots,v_k}$ &$(\eta, \lambda^{v_1,\dots,v_k})$ & $\eta + \norm{\lambda^{v_1,\dots,v_k}}_{\mathcal{G}}\leq 1$ & $\bigO\left(k\log(k)\right)+$\\
			norm  ball & & $\in \real^{1+n}$ & & $\bigO\left(\sum_{i=1}^k|v_i|\right)$    \\
			Spectrahedron   & $\eta W+ VSV^\top $  & $(\eta,S)\in \real\times\symMat^k$ & $\eta\geq 0,S\succeq 0$  & a  full EVD  \\
			& & &  $\eta+\tr(S)=1$  &  of a $k^2$ matrix \\
			Unit nuclear  & $\eta W+ USV^\top$  &  $(\eta,S)\in \real^{1+k^2}$ & $\eta\geq 0, \eta  +\nucnorm{S}\leq 1$  & a  full SVD of \\
			norm Ball & &  & & a $k^2$ matrix \\

			\bottomrule
		\end{tabular}
	\end{table}

	\subsection{Projection Step in APG for $k$DS of group norm ball} \label{sec: projgroupnormball}
	Here we described the projection procedure in $k$DS for group norm ball when the base norm is $\ell_2$ norm.
	Suppose we want to solve the projection problem given $(\eta_0,\lambda_0^{v_1,\dots,v_k})\in \real^{1+n}$
	with decision variable $\eta$ and $\lambda^{v_1,\dots,v_k}$:
	\begin{equation}
	\mbox{minimize} \quad \twonorm{(\eta_0,\lambda_0^{v_1,\dots,v_k})-(\eta ,\lambda^{v_1,\dots,v_k})}
	\quad \mbox{subject to}\quad
	\eta +\norm{\lambda^{v_1,\dots,v_k}}_{\mathcal{G}} \leq 1,~ \eta\geq 0.
	\end{equation}
	Here we further require that $\lambda_0^{v_1,\dots,v_k}$ and $\lambda^{v_1,\dots,v_k}$
	are supported on  $\cup_{i=1}^k v_i$.
	We denote the optimal solution as $\eta^\star, (\lambda^{v_1,\dots,v_k})^\star$.

	Since $\lambda^{v_1,\dots,v_k}$ is only supported on $\cup_{i=1}^k v_i$, we can consider
	it as a vector in $\real^{v_1+\dots+v_k}$ and $\norm{\lambda^{v_1,\dots,v_k}}_{\mathcal{G}}
	= \sum_{i=1}^{k}\twonorm{\lambda^{v_1,\dots,v_k}_{v_i}}.$ The procedure for projection is as follows:
	\begin{enumerate}
		\item First compute the $(\eta^\star,a^\star)$ that solves
		\begin{equation}
		\begin{array}{ll}
		\mbox{minimize}_{(\eta,a)}& \twonorm{(\eta,a)-(\eta_0,[\norm{[\lambda^{v_1,\dots,v_k}_0]_{v_i}}]_{i=1}^k)} \\
		\mbox{subject to} &(\eta,a)\in \real^{k+1}_+,\quad \eta+\sum_{i=1}^{k}a_i\leq 1.
		\end{array}
		\end{equation}
		Here $\real^{k+1}_+$ is the nonnegative orthant in $\real^{k+1}$.
		\item Next, for each $v_i$, we compute $(\lambda^{v_1,\dots,v_k})^\star_{v_i}$ by solving
		\[
		(\lambda^{v_1,\dots,v_k})^\star_{v_i} =\argmin_{\norm{\lambda_{v_i}^{v_1,\dots,v_k}}\leq a^\star _i}\twonorm{[\lambda_{0}^{v_1,\dots,v_k}]_{v_i}-\lambda_{v_i}^{v_1,\dots,v_k}}.
		\]
	\end{enumerate}
	The first step requires a projection to the convex hull of simplex and $0$ and can be done in time $\bigO(k\log k)$. The second step
	requires projection to $\ell_2$ norm ball which is a simple scaling. The correctness can be verified by decomposing
	each $\lambda_{v_i}^{v_1,\dots,v_k} = \alpha_i w_i$ where $\alpha_i\geq 0$ and $w_i$ has $\ell_2$ norm $1$. For general $\ell_p$ norm,
	one has to find a root of a monotone function. This problem can be solved by bisection \cite{sra2011fast}.

	\subsection{Discussion on the norm of group norm ball}\label{sec: discussionOntheNorm}
	For the main Theorems \ref{thm: normalFWresult}, \ref{thm: nonexponentialFiniteConvergence}, and
	\ref{thm: LinearConvergence}, the results holds for any arbitrary norm.
	The positive gap in Lemma \ref{lem: positiveDelta} also holds for an arbitrary norm.
	However, the authors have not been able to verify whether strict complementarity
	implies quadratic growth
	for norms other than the $\ell_2$ norm.

	\subsection{Projection Step in APG for $k$DS of spectrahedron, and nuclear norm ball} \label{sec: projSpechedronAndNuclearNorm}
	We consider how to compute the projection step of $k$DS for the spectrahedron and nuclear norm ball.

	\paragraph{Spectrahedron} We want to find $(\eta^\star,S^\star)$ that solves
	\[
	\text{minimize}\quad \twonorm{(\eta,S)-(\eta_0,S_0)},\quad \text{subject to}\quad S\in \symMat_+^k,\, \eta\geq 0,\, \tr(S)+\eta =1.
	\]
	Here $\twonorm{(\eta,S)}= \sqrt{\eta^2+ \fronorm{S}^2}$.
	The procedures are as follows:
	\begin{enumerate}
		\item Compute the eigenvalue decomposition of $S_0 = V\Lambda_0 V^\top$,
		where $\Lambda_0\in \symMat^k$ is a diagonal matrix with diagonal $\vec{\lambda}_0=( \lambda_1,\dots,\lambda_k)$.
		\item Compute $(\eta^\star, \vec{\lambda}^\star) = \arg\min_{(\eta,\vec{\lambda})\in \Delta^{k+1}}\twonorm{(\eta_0,\vec{\lambda}_0)-(\eta,\vec{\lambda})}$.
		\item Form $S^\star = V\diag(\vec{\lambda^\star})V^\top$.
		Here $\diag(\lambda)$ forms a diagonal matrix with the vector $\lambda$ on the diagonal.
	\end{enumerate}
	The main computational step is the eigenvalue decomposition which requires $\bigO(k^3)$ time.
	The correctness of the procedure
	can be verified as in \cite[Lemma 3.1]{allen2017linear}
	and \cite[Lemma 6]{garber2019convergence}.

	\paragraph{Unit nuclear ball} We want to find $(\eta^\star,S^\star)$ that solves
	\[
	\text{minimize}\quad \twonorm{(\eta,S)-(\eta_0,S_0)},\quad \text{subject to}\quad \eta+\nucnorm{S}\leq 1,\, \eta\geq 0.
	\]
	The procedures are as follows:
	\begin{enumerate}
		\item Compute the singular value decomposition of $S_0 = U\Lambda_0 V^\top$,
		where $\Lambda_0\in \symMat_+^k$ is a diagonal matrix with diagonal $\vec{\lambda}_0=( \lambda_1,\dots,\lambda_k)$.
		\item Compute $(\eta^\star, \vec{\lambda}^\star) = \arg\min_{(\eta,\vec{\lambda})\in \Delta^{k+1}}\twonorm{(\eta_0,\vec{\lambda}_0)-(\eta,\vec{\lambda})}$.
		\item Form $S^\star = U\diag(\vec{\lambda^\star})V^\top$.
		Here $\diag(\vec{\lambda^\star})$ forms a diagonal matrix with the vector $\vec{\lambda^\star}$ on the diagonal.

	\end{enumerate}
	The main computational step is the singular value decomposition which requires $\bigO(k^3)$ time.
	The correctness of the procedure
	can be verified as in \cite[Lemma 3.1]{allen2017linear}
	and \cite[Lemma 6]{garber2019convergence}.

	\section{Examples, lemmas, tables, and Proofs for Section \ref{sec: theoreticalGuarantees}}\label{sec: AppendixtheoreticalGuarantees}

	\subsection{Further discussion on strict complementarity}\label{sec: strictComplementarityFurtherDiscussion}
	We give two additional remarks on the strict complementarity.
	\begin{enumerate}
		\item Traditionally, the boundary location condition $x\in \partial \Omega$
		is not included in the definition of strict complementarity.
		We include this condition for two reasons:
		first,  the extra location condition excludes the trivial case that the dual solution of \eqref{opt: mainProblem} is $0$, and $\xsol$ in the interior of $\Omega$,
		in which case FW can be proved to converges linearly \cite{garber2015faster}; second, as we shall see in Example \ref{example: robustitcity},
		such assumption ensures the robustness of the sparsity of $\xsol$.
		\item Strict complementarity (without the boundary location condition) holds generically:
		more precisely, it holds for almost all $c$ in our optimization problem \eqref{opt: mainProblem},
		$\min_{x\in\Omega} g(\Amap x) +\inprod{c}{x}$, \cite[Corollary 3.5]{drusvyatskiy2011generic}.
	\end{enumerate}

	\begin{exmp} \label{example: robustitcity}
		Consider the problem \[
		\min_{x\in \alpha \Delta^n} \frac{1}{2}\norm{x-e_1-\frac{1}{n}\onevec}^2.
		\]
		Here $\onevec$ is the all one vector and $\alpha>0$.
		If we set $\alpha=1$, then $\xsol =e_1$ and the gradient $\nabla f(\xsol) = -\frac{1}{n}\onevec$.
		Hence
		we see that strict complementarity does not hold, using Lemma \ref{lem: positiveDelta}.
		In this case, even though $\xsol = e_1$ is sparse for $\alpha=1$,
		the solution is no longer sparse when $\alpha$ is slightly larger than $1$.
		Hence, we see a perturbation to the constraint can cause instability of the sparsity
		when strict complementarity fails.
	\end{exmp}

	\subsection{Lemmas and tables for strict complementarity} \label{sec: gappositiveproofs}
	In this section, we show that the gap quantity defined in Definition \ref{def: strictComplementarity} is indeed
	positive when strict complementarity holds. We then present a table of summarizing the notations $\mathcal{F}(\xsol)$,
	$\mathcal{F}^c(\xsol)$, and the gap $\delta$.

	Here, for the group norm ball,
  we consider a general norm denoted as $\norm{\cdot}$ which is not necessarily the Euclidean $\ell_2$ norm.
  The dual norm of $\norm{\cdot}$ is defined as $\norm{x}_* = \max _{\norm{y}\leq 1}\inprod{y}{x}$.
  We note here the group norm ball is assumed to have radius one.
	\begin{lem}\label{lem: positiveDelta}
		When $\Omega$ is a polytope, group norm ball, spectrahedron, and nuclear norm ball,
		if strict complementarity holds for Problem \eqref{opt: mainProblem},
		then the gap $\delta$ is positive. Moreover,
		we can characterize the gradient at the solution and the size of the gap in each case:
		\begin{itemize}
			\item Polytope: order the vertices $v\in \Omega$ according to the inner products $\inprod{\nabla  f(\xsol)}{v}$
			in ascending order as $v_1,\dots,v_{\rsol},\dots, v_{l}$ where $l$ is the total number of vertices.
			Then
			$\inprod{\nabla  f(\xsol)}{v_{i}}$, $i=1,\dots,\rsol$ are all equal and the gap
			$\delta$ is
			$\delta = \inprod{\nabla  f(\xsol)}{v_{\rsol+1}}-\inprod{\nabla  f(\xsol)}{v_{\rsol}}$.

			\item Group norm ball for arbitrary base norm:
			order vectors $[\nabla f(\xsol)]_g$, $g\in \mathcal{G}$ according to their dual norm in descending order as $[\nabla f(\xsol)]_{g_1}$,\dots,$[\nabla f(\xsol)]_{g_{|\mathcal{G}|}}$.
			Then $\norm{[\nabla f]_{g_i}}_*$, $i=1,\dots,\rsol$ are all equal, and
			the gap $\delta$ is
			$\delta = \norm{[\nabla f(\xsol)]_{g_{\rsol}}}_*-\norm{[\nabla f(\xsol)]_{g_{\rsol+1}}}_*$.

			\item Spectrahedron: The smallest $\rsol$ eigenvalues of $\nabla f(\Xsol)$ are all equal and
			$
			\delta = \lambda_{\dm-\rsol}(\nabla f(\Xsol)) -\lambda_{\dm-\rsol +1}(\nabla f(\Xsol))
			$
			\item Nuclear norm ball: The largest $\rsol$ singular values of $\nabla f(\Xsol)$ are all equal and
			$
			\delta = \sigma_{\rsol}(\nabla f(\Xsol)) -\sigma_{\rsol +1}(\nabla f(\Xsol)).
			$
		\end{itemize}
	\end{lem}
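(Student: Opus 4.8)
The plan is to treat each constraint set separately, but in every case the core mechanism is the same: the optimality condition for \eqref{opt: mainProblem} says that $-\nabla f(\xsol)$ lies in the normal cone $N_\Omega(\xsol)$, and strict complementarity upgrades this to $-\nabla f(\xsol)\in\relint(N_\Omega(\xsol))$. The normal cone at a boundary point of each of these sets has an explicit description (it is the cone generated by the ``active'' directions — the minimizing vertices for a polytope, the maximal-dual-norm groups for the group norm ball, the bottom eigenspace for the spectrahedron, the top singular subspace for the nuclear norm ball). Being in the \emph{relative interior} of that cone forces the nonactive directions to be strictly separated from the active ones, and that strict separation is exactly the positive gap $\delta$.

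\textbf{Polytope.} First I would write $N_\Omega(\xsol)=\{y\mid \inprod{y}{v}\le\inprod{y}{\xsol}\ \forall v\in\Omega\}$ and note that, since $\xsol$ lies in the relative interior of the smallest face $\mathcal F(\xsol)$ (whose vertices are $v_1,\dots,v_{\rsol}$), membership $-\nabla f(\xsol)\in N_\Omega(\xsol)$ is equivalent to $\inprod{\nabla f(\xsol)}{v_i}=\inprod{\nabla f(\xsol)}{\xsol}$ for $i\le\rsol$ and $\inprod{\nabla f(\xsol)}{v}\ge\inprod{\nabla f(\xsol)}{\xsol}$ for every other vertex $v$. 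So after ordering the vertices by ascending $\inprod{\nabla f(\xsol)}{v}$, the first $\rsol$ values coincide. To get $\delta>0$ I would argue by contradiction: if $\inprod{\nabla f(\xsol)}{v_{\rsol+1}}=\inprod{\nabla f(\xsol)}{v_{\rsol}}$, then $-\nabla f(\xsol)$ is normal to a strictly larger face (one containing $v_{\rsol+1}$), hence lies on the relative boundary of $N_\Omega(\xsol)$, contradicting strict complementarity. Since $\delta=\inprod{\nabla f(\xsol)}{v_{\rsol+1}}-\inprod{\nabla f(\xsol)}{v_{\rsol}}$ and $\mathcal F^c(\xsol)$ is the convex hull of the remaining vertices, linearity of $v\mapsto\inprod{v}{\nabla f(\xsol)}$ shows the minimum in \eqref{eq: polytopeDeltaDef} is attained at $v_{\rsol+1}$, giving the stated formula.

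\textbf{Group norm ball, spectrahedron, nuclear norm ball.} For these I would use the classical description of the normal cone / subdifferential of a norm at a boundary point: for the $\ell_2$ group norm (or general base norm), $N_\Omega(\xsol)$ at $\norm{\xsol}_\mathcal G=1$ consists of vectors $y$ with $[y]_g$ aligned (dually) with $[\xsol]_g$ on the active groups and $\norm{[y]_g}_*\le\norm{\xsol}_\mathcal G$-multiplier elsewhere; the active groups are precisely $\mathcal F(\xsol)$. Strict complementarity then forces $\norm{[\nabla f(\xsol)]_{g_i}}_*$ to be equal and maximal over $i\le\rsol$, and strictly larger than $\norm{[\nabla f(\xsol)]_{g_{\rsol+1}}}_*$ — again by a boundary-of-the-cone contradiction argument. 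Evaluating \eqref{eq: polytopeDeltaDef} over $\mathcal F^c(\xsol)$ (vectors of unit group norm supported off $\mathcal F(\xsol)$), the most negative inner product with $\nabla f(\xsol)$ is $-\norm{[\nabla f(\xsol)]_{g_{\rsol+1}}}_*$ by H\"older/duality, while $\inprod{\xsol}{\nabla f(\xsol)}=-\norm{[\nabla f(\xsol)]_{g_{\rsol}}}_*$, yielding the formula. The spectrahedron and nuclear norm ball cases are identical in spirit, using that the normal cone of $\mathcal{SP}^\dm$ at $\Xsol$ is $\{-M\mid M\succeq 0,\ \range(M)\subseteq\ker\Xsol\ \text{components}\}$ translated appropriately, so strict complementarity means the bottom $\rsol$ eigenvalues of $\nabla f(\Xsol)$ coincide and are strictly below the $(\rsol+1)$-st; similarly for singular values of $\nabla f(\Xsol)$ on the nuclear norm ball, where the variational characterization $\max\{\inprod{-\nabla f(\Xsol)}{X}: X\in\mathcal F^c,\ \nucnorm{X}\le 1\}=\sigma_{\rsol+1}(\nabla f(\Xsol))$ finishes the computation.

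\textbf{Main obstacle.} The routine parts are the normal-cone formulas and the Fenchel/duality evaluations of the minimum defining $\delta$; the one step that needs genuine care is the equivalence ``$-\nabla f(\xsol)\in\relint N_\Omega(\xsol)$ iff the active/inactive gap is strictly positive.'' For the polytope this is clean combinatorics, but for the spectrahedron and nuclear norm ball the normal cone is not polyhedral, so I would need to identify its lineality space and relative interior precisely — e.g., for $\mathcal{SP}^\dm$ the face $\mathcal F(\Xsol)$ corresponds to a subspace $\range(\Xsol)$, the normal cone is $\{Z : Z\preceq \mu I \text{ on } \range(\Xsol)^\perp,\ Z=\mu I \text{ on }\range(\Xsol)\}$ type structure, and $\relint$ requires the inequality on $\range(\Xsol)^\perp$ to be \emph{strict} in the top eigenvalue direction there. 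Translating that strictness into $\lambda_{\dm-\rsol}(\nabla f(\Xsol))>\lambda_{\dm-\rsol+1}(\nabla f(\Xsol))$ is the technical heart of the lemma.
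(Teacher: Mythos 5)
Your proposal is correct and follows essentially the same route as the paper's proof: in each case you characterize the normal cone $N_\Omega(\xsol)$ explicitly, use strict complementarity to place $-\nabla f(\xsol)$ in its relative interior, translate that membership into a strict separation between the active set $\mathcal{F}(\xsol)$ and the inactive directions, and evaluate the minimum defining $\delta$ by linearity or duality. The only cosmetic difference is in the polytope case, where you argue by contradiction via the normal-fan correspondence while the paper exhibits the face-defining vector $a$ as an explicit witness for the relative interior; the mathematical content is the same, and you correctly identify the relative-interior characterization for the spectrahedron and nuclear norm ball as the technical heart.
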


	\begin{proof}
		Let us first consider the polytope case.
		\myparagraph{Polytope} Since the constraint set is a polytope and $\xsol \in \partial \Omega$, we know the smallest face $\mathcal{F}(\xsol)$ containing $x$
		is proper and admits a face-defining inequality $\inprod{a}{x} \leq b$ for some $a\in \real^{\dm}$ and $b\in \real$. That is, $\mathcal{F}(\xsol) = \{x\mid \inprod{a}{x}=b\}\cap \Omega$ and
		for every $x\in \Omega$, $\inprod{a}{x}\leq b$.
		In particular, this implies that (1) for any vertex $v$ that is not in $\mathcal{F}(\xsol)$,
		$\inprod{a}{v}<b$, and (2) $\inprod{a}{\xsol}=b$.

		Let us now characterize the normal cone $N_{\Omega}(\xsol)$.
		Let $\mathcal{V}$ be the set of vertices in $\Omega$.
		Since $\Omega$ is bounded, we know that every
		point in $\Omega$ is a convex combination of the vertices.
		Hence $N_{\Omega}(\xsol)$ is the set
		of solutions $g$ to the following linear system:
		\begin{equation}\label{eq: normalconepolytope}
		\inprod{g}{v}\leq \inprod{g}{\xsol},\quad \text{for all}\quad v\in \mathcal{V}.
		\end{equation}
		Since $\mathcal{F}(\xsol)$ is the smallest face containing $\xsol$, we know
		that $\xsol \in \mbox{relint}(\mathcal{F}(\xsol))$, and so the description of
		normal cone  $N_{\Omega}(\xsol)$ in \eqref{eq: normalconepolytope} reduces to
		\begin{align}
		\inprod{g}{v_1} & = \inprod{g}{\xsol},\quad \text{for all}\quad v_1\in \mathcal{F}(\xsol), \label{eq: normalconepolytopeReducedF}\\
		\inprod{g}{v_2} & \leq \inprod{g}{\xsol},\quad \text{for all}\quad v_2\text{ being vertices of } \mathcal{F}^c(\xsol). \label{eq: normalconepolytopeReducedNotF}
		\end{align}
		Note that the vector $a$ in the face-defining inequality satisfies \eqref{eq: normalconepolytopeReducedF}
		and satisfies \eqref{eq: normalconepolytopeReducedNotF} with strict inequality as we just argued.
		Hence, the relative interior of $N_{\Omega}(\xsol)$ consists of those
		vectors $g$ that satisfy \eqref{eq: normalconepolytopeReducedF}
		and satisfy \eqref{eq: normalconepolytopeReducedNotF} with a strict inequality.
		As $-\nabla f(\xsol)\in \relint(N_{\Omega}(\xsol))$, we know by the previous argument that
		$-\nabla f(\xsol)$ satisfies \eqref{eq: normalconepolytopeReducedNotF} with strict
		inequality, which is exactly the condition $\delta>0$.
		We arrive at the formula for $\delta$ by noting that
		$\inprod{\nabla f(\xsol)}{v}=\inprod{\nabla f(\xsol)}{\xsol}$
		for every $v\in \mathcal{F}(\xsol)$ due to \eqref{eq: normalconepolytopeReducedF}.

		\myparagraph{Group norm ball}
		Again, recall we here define the group norm ball using any general norm $\norm{\cdot}$.
		The normal cone at $\xsol$ for unit group norm ball is defined as
		\[
		N_{\Omega}(\xsol)=\{y\mid \inprod{y}{x}\leq \inprod{y}{\xsol}, \;\text{for all}\; \sum_{g\in \mathcal{G}} \|x_g\|\leq 1\}.
		\]
		Standard convex calculus reveals the following properties:
		\begin{enumerate}
			\item The normal cone is a linear multiple of the subdifferential for $\xsol \in \partial \Omega$:
			$N_{\Omega} (\xsol)= \{y\mid y \in \lambda \partial \norm{\xsol}_{\mathcal{G}},\,\lambda \geq 0\}$.
			\item The product rule applies to $\partial \norm{\xsol}_{\mathcal{G}}$ as $\mathcal{G}$ forms a partition: $\partial \norm{\xsol}_{\mathcal{G}} = \prod_{g\in \mathcal{G}}\partial \norm{(\xsol)_g}$.
			\item Any vector in the subdifferential of
			a group $g$ in the support of the solution has norm 1:
			for every $g\in \mathcal{F}(\xsol)$ and every $y_g\in \partial \norm{(\xsol)_g}$,
			$\norm{y_g}_*=1$, and $\inprod{y_g}{(\xsol)_g} = \norm{(\xsol)_g}$.
			\item The subdifferential for groups $g$ not in the support is a unit dual norm ball: for every $g\not\in \mathcal{F}(\xsol)$,
			$\partial \norm{(\xsol)_g} = \mathbf{B}_{\norm{\cdot}_*}:\,=\{y_g\in \real^{|g|} \mid \norm{y_g}_*\leq 1 \}$.
		\end{enumerate}

		The above properties reveal that the normal cone is the set
		\begin{equation}\label{eq: unitgroupnormballnormalcone}
		N_{\Omega}(\xsol)=\big\{y\mid y\in \lambda \left(\prod_{g\in \mathcal{F}(\xsol)}\partial\norm{(\xsol)_g} \times \prod_{g\in \mathcal{G}\setminus\mathcal{F}(\xsol)}\mathbf{B}_{\norm{\cdot}_*}\right), \lambda\geq 0 \big\},
		\end{equation}
		where for every $g\in \mathcal{F}(\xsol)$ and every $y_g\in \partial \norm{(\xsol)_g}$, $\norm{y_g}_*=1$.
		Hence, we know that the relative interior of $N_{\Omega}(\xsol)$ is simply
		\begin{equation}\label{eq: normaconerelativeinterior}
		\begin{aligned}
		&\mbox{relint}\left(N_{\Omega}(\xsol)\right) \\
		=&
		\big\{y\mid y\in \lambda
		\left(
		\prod_{g\in \mathcal{F}(\xsol)}
		\mbox{relint}\left(\partial\norm{(\xsol)_g}\right)
		\times
		\prod_{g\in \mathcal{G}-\mathcal{F}(\xsol)}
		\mbox{relint}\left( \mathbf{B}_{\norm{\cdot}_*}\right)
		\right),
		\lambda > 0
		\big\},
		\end{aligned}
		\end{equation}
		where for every $g\in \mathcal{F}(\xsol)$,
		and every $y_g\in \mbox{relint}\left(\partial \norm{(\xsol)_g}\right)$, $\norm{y_g}_*=1$,
		and for every $g\in \mathcal{G}- \mathcal{F}(\xsol)$,
		and every $y_g \in \relint\left(\mathbf{B}_{\norm{\cdot}_*}\right)$,
		$\norm{y_g}_*<1$. Because of the strict inequality of $\lambda$ in
		\eqref{eq: normaconerelativeinterior}, and strict inequality for
		$\norm{y_g}_*<1$ for $y_g \in \relint\left(\mathbf{B}_{\norm{\cdot}_*}\right)$, we see that
		\begin{equation}\label{eq: nablagradientgroupdualnorm}
		\begin{aligned}
		&\norm{[\nabla f(\xsol)]_{g_{1}}}_* =\dots =\norm{[\nabla f(\xsol)]_{g_{\rsol}}}_*,\text{  and  }\\
		&\norm{[\nabla f(\xsol)]_{g_{\rsol}}}_*-\norm{[\nabla f(\xsol)]_{g_{\rsol+1}}}_*>0
		\end{aligned}
		\end{equation}
		as $-\nabla f(\xsol) \in \mbox{relint}\left(N_{\Omega}(\xsol)\right)$.
		Using the condition that
		for every $g\in \mathcal{F}(\xsol)$ and every $y_g\in \partial \norm{(\xsol)_g}$, $\norm{y_g}_*=1$, and $\inprod{y_g}{(\xsol)_g} = \norm{(\xsol)_g}$,
		we know $\inprod{-\nabla f(\xsol)}{\xsol} = \norm{[\nabla f(\xsol)]_{g_{\rsol}}}_*$.
		Furthermore, using generalized Cauchy-Schwarz, it can be proved that
		$\min_{x \in \mathcal{F}^c(\xsol)}\inprod{\nabla f(\xsol)}{x} = -\norm{[\nabla f(\xsol)]_{g_{\rsol+1}}}_*$.
		Hence, combining the two equalities with \eqref{eq: nablagradientgroupdualnorm},
		we see that $\delta>0$ and arrive at the stated formula for $\delta$.

		\myparagraph{Spectrahedron} We first note that $\Xsol\in \partial \Omega$ and $\tr(X)=1$
		imply that $1\leq  \rsol <\dm$.  To compute the normal cone, we can apply
		the sum rule of subdifferentials to
		\[
		\chi(\{X\in \symMat^\dm\mid \tr(X)=1 \}) + \chi(X \succeq 0),
		\]
		where $\chi$ is the characteristic function,
		which takes value $0$ for elements belonging to the set
		and $+\infty$ otherwise) of $\{X\in \symMat^\dm\mid \tr(X)=1 \}$ and $\symMat_+^\dm$ and reach
		\begin{equation}\label{eq: sumRuleSubdifferentialSpec}
		N_{\Omega}(\Xsol) = \{sI\mid s\in \real\} +\{-Z\mid Z\succeq 0,\range(Z)\subseteq \nullspace(\Xsol)\}.
		\end{equation}
		We note that the sum rule for the relative interior is valid here because $\frac{1}{n}I$ belongs to the interior of both sets.
    Applying the sum rule to \eqref{eq: sumRuleSubdifferentialSpec},
		we find that
		\begin{equation*} 
		\mbox{relint}(N_{\Omega}(\Xsol) )= \{sI\mid s\in \real\} + \{-Z\mid Z\succeq 0,\range(Z)= \nullspace(\Xsol)\}.
		\end{equation*}
		Or equivalently,
		\begin{equation*} 
		\mbox{relint}(N_{\Omega}(\Xsol) )= \{sI\mid s\in \real\} + \{-Z\mid Z\succeq 0,\nullspace(Z)= \range(\Xsol)\}.
		\end{equation*}
		Using the above equality 
		and $-\nabla f(\Xsol) \in \mbox{relint}(N_{\Omega}(\Xsol))$, we know there are $\zsol\succeq 0$ and $\ssol\in \real$ that
		\begin{equation}\label{eq: sumRuleRelativeInteriorSpec}
		\nabla f(\Xsol)= -\ssol I +\zsol, \text{ and } \nullspace(\zsol) = \range(\Xsol).
		\end{equation}
		Denote the eigenspace corresponding to the smallest $\rsol$ values of $\nabla f(\Xsol)$ as $\EV_{\rsol}(\nabla f(\Xsol))$.
		From \eqref{eq: sumRuleRelativeInteriorSpec},
		it is immediate that
		\[
		\EV_{\rsol}(\nabla f(\Xsol)) = \range(\Xsol).
		\]
		Moreover, from \eqref{eq: sumRuleRelativeInteriorSpec}, we also have
		\begin{equation} \label{eq: gapOnEigenValues}
		\begin{aligned}
		&\lambda_{\dm-\rsol+1}(\nabla f(\Xsol))- \lambda_{\dm-\rsol}(\nabla f(\Xsol))>0, \quad \text{and}\\
		&\inprod{\nabla f(\Xsol)}{\Xsol} = -\ssol=
		\lambda_{n-i+1}(\nabla f(\Xsol)), \;i=1,\dots,\rsol.
		\end{aligned}
		\end{equation}
		Combining \eqref{eq: gapOnEigenValues} and the well-known fact that
		\[
		\min_{X\in \Omega,\,\range(X)\perp \EV_{\rsol}(\nabla f(\Xsol))} \inprod{\nabla f(\Xsol)}{X} =
		\lambda_{\dm-\rsol+1}(\nabla f(\Xsol)),
		\]
		we see that $\delta$ is indeed positive, and the formula for $\delta$ holds.
		\myparagraph{Nuclear norm ball} We first note that $\Xsol\in \partial \Omega$ imply that
		$1< \rsol <\min(\dm_1,\dm_2)$, and $\nucnorm{\Xsol}=1$. Let the singular value decomposition of $\Xsol$ as
		$\Xsol = U\Sigma V$ with $U\in\real^{\dm_1\times \rsol}$ and $V\in \real^{\dm_2\times \rsol}$.
		The normal cone of the unit nuclear norm ball is
		\begin{equation}
		N_{\Omega}(\Xsol) = \{Y \mid Y =\lambda Z, \, Z= UV^\top + W,\,  W^\top U =0, \,  W V = 0,\,\opnorm{W}\leq 1\,\text{and}\,\lambda\geq 0 \}.
		\end{equation}
		Hence, the relative interior is
		\begin{equation}\label{eq: DescriptionOfRelativeInteriorOfNuc}
		N_{\Omega}(\Xsol) = \{Y \mid Y =\lambda Z, \, Z= UV^\top + W,\,  W^\top U =0,   WV = 0,\opnorm{W} < 1\,\text{and}\,\lambda> 0 \}.
		\end{equation}
		Since $-\nabla f(\Xsol)\in \mbox{relint}(N_{\Omega}(\Xsol))$, we know immediately that
		\begin{align}
		&\sigma_{\rsol}(\nabla f(\Xsol))-\sigma_{\rsol +1}(\nabla f(\Xsol))>0,
		\end{align}
		and the top $\rsol$ left and right singular vectors of $\nabla f(\Xsol)$ are just the columns of
		$-U$ and $V$, and $\inprod{\nabla f(\Xsol)}{\Xsol}=-\sigma_{i}(\nabla f(\Xsol))$ for $i=1,\dots,\rsol$.
		Combining pieces and the standard fact that
		\[
		\min_{\range(X)\perp \range(U),\,\nucnorm{X}\leq 1 }\inprod{\nabla f(\Xsol)}{X}=-\sigma_{\rsol +1}(\nabla f(\Xsol)),
		\]
		we see the gap $\delta$ is indeed positive and the formula is correct.
	\end{proof}
	A table of the notions $\mathcal{F}(\xsol), \mathcal{F}^c(\xsol)$, and the formula of gap $\delta$ is shown as
	Table \ref{table: supportComplementarityGapDelta}.
	\begin{table}[H]
		\centering
		\caption{For several constraint sets $\Omega$,
			this table describes the support set $\mathcal{F}(\xsol)$, its complementary set $\mathcal{F}^c$,
			and the gap $\delta$.
			Recall the gap $\delta = \min \{ \inprod{u}{\nabla f(\xsol)} -\inprod{\xsol}{\nabla f(\xsol)}\mid u\in \mathcal{F}^c(\xsol)\subseteq\Omega\}$ and admits
			a specific formula as described in Lemma \ref{lem: positiveDelta}.
			We denote the gradient at $\xsol$ as $\nabla _\star$.
			For a polytope,
			we order the vertices $v\in \Omega$ according to their inner products $\inprod{\nabla  f(\xsol)}{v}$
			in ascending order as $v_1,\dots,v_{\rsol},\dots, v_{l}$, where $l$ is the number of vertices.
			For the group norm ball,
			we order vectors $[\nabla f(\xsol)]_g$, $g\in \mathcal{G}$ according to their $\ell_2$ norm
			in descending order
			as $[\nabla f(\xsol)]_{g_1}$,\dots,$[\nabla f(\xsol)]_{g_{|\mathcal{G}|}}$.}\label{table: supportComplementarityGapDelta}.
		\begin{tabular}{|c|c|c|c|}
			\hline
			Constraint $\Omega$ & $\mathcal{F}(\xsol)$ & $\mathcal{F}^c$ & $\delta$ formula \\
			\hline
			polytope	 & smallest face & convex hull of all  &  $\inprod{\nabla  f(\xsol)}{v_{\rsol+1}}$\\
			& containing $\xsol$ & the vertices not in $\mathcal{F}(\xsol)$ & $-\inprod{\nabla  f(\xsol)}{v_{\rsol}}$ \\
			\hline
			group norm ball & $\{g\in \mathcal{G}\mid (\xsol)_g\not=0\}$ & $\{x\mid x_g=0, \forall g\in \mathcal{F}(\xsol)\}$ & $\twonorm{[\nabla_\star]_{g_{\rsol}}}$\\
			&                   & &         $-\twonorm{[\nabla_\star]_{g_{\rsol+1}}}$ \\
			\hline
			Spectrahedron & $\range(\Xsol)$ & $\{X\in [\range(\Xsol)]^\perp\}\cap \mathcal{SP}^n$ & $\lambda_{\dm-\rsol}(\nabla_\star)$    \\
			& &  & $ -\lambda_{\dm-\rsol +1}(\nabla_\star)$    \\
			\hline
			Nuclear norm ball & $\range(\Xsol)$ & $\{X\in [\range(\Xsol)]^\perp\}\cap\mathbf{B}_{\nucnorm{\cdot}}$ & $\sigma_{\rsol}(\nabla_\star)$    \\
			& &  & $ -\sigma_{\rsol +1}(\nabla_\star)$    \\
			\hline
		\end{tabular}
	\end{table}

	\subsection{Quadratic growth under strict complementarity}\label{sec: qgfromstrcomp}
	This section develops that quadratic growth does hold under strict complementarity and
	the condition $g$ in \eqref{opt: mainProblem} is strongly convex.
	\begin{thm}\label{thm: qgundersc}
		Suppose Problem \eqref{opt: mainProblem}, $\min_{x\in \Omega} g(\Amap x)+\inprod{c}{x}$,
		satisfies that
		$g$ is strongly convex and the constraint set $\Omega$ is one of the four sets (i) polytope,
		(ii) unit group norm ball, (iii) spectrahedron, and (vi) unit nuclear norm ball.
		Further suppose that strict complementarity holds.
		Then quadratic growth holds for Problem \eqref{opt: mainProblem} as well.
	\end{thm}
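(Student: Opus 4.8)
The plan is to reduce quadratic growth to a single ``mixed'' error bound and then establish that bound using a decomposition of $x-\xsol$ aligned with the minimal face of $\Omega$ at $\xsol$. Write $\mu>0$ for the strong convexity parameter of $g$. Combining convexity of $f$ with strong convexity of $g$ evaluated at the pair $\Amap x,\Amap\xsol$ gives, for all $x\in\Omega$,
\begin{equation}\label{eq: qgsc-basic}
f(x)-f(\xsol)\geq \inprod{\nabla f(\xsol)}{x-\xsol}+\tfrac{\mu}{2}\fronorm{\Amap(x-\xsol)}^{2},
\end{equation}
and both terms on the right are nonnegative (the linear one by optimality of $\xsol$). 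Hence it suffices to produce constants $A,B>0$ (depending only on $\mu$, $\delta$, the diameter $D$, $\opnorm{\Amap}$, and the face geometry) with
\begin{equation}\label{eq: qgsc-mixed}
\norm{x-\xsol}^{2}\leq A\,\fronorm{\Amap(x-\xsol)}^{2}+B\,\inprod{\nabla f(\xsol)}{x-\xsol}\qquad\text{for all }x\in\Omega,
\end{equation}
since \eqref{eq: qgsc-basic} then yields $\fronorm{\Amap(x-\xsol)}^{2}\leq\frac{2}{\mu}(f(x)-f(\xsol))$ and $\inprod{\nabla f(\xsol)}{x-\xsol}\leq f(x)-f(\xsol)$, so \eqref{eq: qgsc-mixed} gives quadratic growth with $\gamma=(2A/\mu+B)^{-1}$.

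To get \eqref{eq: qgsc-mixed} I would first show that $\Amap$ is injective on the tangent space $\mathcal{T}$ of the minimal face of $\Omega$ containing $\xsol$ (a simplex for the polytope and group norm ball, a trace-$0$ subspace of matrices confined to $\range(\Xsol)$ — respectively symmetric, or of the form $USV^{\top}$ — for the spectrahedron and nuclear norm ball). Since $\xsol$ lies in the relative interior of that face, $\xsol\pm\varepsilon d\in\Omega$ for every $d\in\mathcal{T}$ and small $\varepsilon>0$. If $\Amap d=0$ for some nonzero $d\in\mathcal{T}$, then $\Amap(\xsol+td)=\Amap\xsol$ for all $t$, so $f(\xsol+td)=g(\Amap\xsol)+\inprod{c}{\xsol+td}=f(\xsol)+t\inprod{\nabla f(\xsol)}{d}$ is affine in $t$ on a symmetric neighborhood of $0$; optimality of $\xsol$ then forces it to be constant there, contradicting uniqueness of $\xsol$. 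So $\fronorm{\Amap d}\geq\sigma\norm{d}$ on $\mathcal{T}$ for some $\sigma>0$.

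Next I would decompose $x-\xsol=d_{\mathcal{T}}+r$ with $d_{\mathcal{T}}$ the Euclidean projection onto $\mathcal{T}$ and $r\perp\mathcal{T}$, and charge $\norm{r}$ to the gap $\delta$. Using the explicit description of $\nabla f(\xsol)$ from Lemma \ref{lem: positiveDelta} (note $\nabla f(\xsol)$ is constant on the minimal face, with value $\inprod{\nabla f(\xsol)}{\xsol}$), one verifies case by case that $\inprod{\nabla f(\xsol)}{x-\xsol}$ is bounded below by $\delta$ times a ``distance to the face'': for a polytope, write $x=(1-\theta)y+\theta z$ with $y$ in the face and $z\in\mathcal{F}^{c}(\xsol)$, so $\inprod{\nabla f(\xsol)}{x-\xsol}\geq\theta\delta$ while $\norm{r}\leq\theta D$; for the group norm ball, $\inprod{\nabla f(\xsol)}{x-\xsol}$ is bounded below by a positive combination of $\sum_{g\notin\mathcal{F}(\xsol)}\norm{x_{g}}$ and $\sum_{g\in\mathcal{F}(\xsol)}\norm{x_{g}-\norm{x_{g}}w_{g}}^{2}$, with $w_{g}$ the unit direction of $-[\nabla f(\xsol)]_{g}$, which together dominate $\norm{r}^{2}$; for the spectrahedron and nuclear norm ball, with $P$ (and, in the rectangular case, $Q$) the projectors onto the range (and row space) of $\Xsol$, $P^{\perp}=I-P$, and $S$ the compression of $X$ to these subspaces, one obtains $\inprod{\nabla f(\Xsol)}{X-\Xsol}\geq\delta\,\nu(X)$ for a nonnegative $\nu(X)$ that upper-bounds both $1-\tr S$ and the nuclear norms of the off-range blocks of $X$; these control the trailing block, the off-diagonal blocks, and the part of the diagonal block lying outside $\mathcal{T}$. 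In every case this gives $\norm{r}^{2}\leq C_{1}\inprod{\nabla f(\xsol)}{x-\xsol}$. Finally $\fronorm{\Amap d_{\mathcal{T}}}\leq\fronorm{\Amap(x-\xsol)}+\opnorm{\Amap}\norm{r}$, so $\norm{d_{\mathcal{T}}}\leq\sigma^{-1}\bigl(\fronorm{\Amap(x-\xsol)}+\opnorm{\Amap}\norm{r}\bigr)$ by the injectivity step, and $\norm{x-\xsol}^{2}\leq2\norm{d_{\mathcal{T}}}^{2}+2\norm{r}^{2}$ combined with the bound on $\norm{r}$ yields \eqref{eq: qgsc-mixed}.

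The reduction and the injectivity step are routine; the real work — and the main obstacle — is the case-by-case lower bound on $\inprod{\nabla f(\xsol)}{x-\xsol}$ for the matrix sets. There ``closeness to the face'' must be extracted from the eigenvalue gap $\delta=\lambda_{\dm-\rsol}(\nabla f(\Xsol))-\lambda_{\dm-\rsol+1}(\nabla f(\Xsol))$ (spectrahedron) or the singular value gap $\delta=\sigma_{\rsol}(\nabla f(\Xsol))-\sigma_{\rsol+1}(\nabla f(\Xsol))$ (nuclear norm ball) via Weyl- and von Neumann-type trace inequalities and the pinching inequality for the nuclear norm. Moreover the face's tangent space is trace constrained, so the small trace remainder of the diagonal block, and — for the nuclear norm ball, which lacks positive semidefiniteness — both the off-diagonal blocks $PXP^{\perp},P^{\perp}XP$ and the skew-symmetric part of the diagonal block, must each be bounded by $\delta^{-1}\inprod{\nabla f(\Xsol)}{X-\Xsol}$ rather than by $\Amap$; verifying these estimates is where the bulk of the argument lies.
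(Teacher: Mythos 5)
Your strategy is correct and genuinely different from the paper's. The paper handles the four cases by four different means: it cites prior work for the polytope (\cite[Lemma 2.5]{beck2017linearly}) and the spectrahedron (\cite[Theorem 6]{DingFeiXuYang2020}), proves the group-norm-ball case through the error-bound framework of \cite{zhou2017unified} and \cite[Corollary 3.6]{drusvyatskiy2018error} (verifying bounded linear regularity of $\Gamma_f(\bar y)\cap\Gamma_\Omega(\bar h)$ via polyhedrality and then metric subregularity by a direct computation), and reduces the nuclear norm ball to the spectrahedron by a dilation argument. You instead give one unified, self-contained argument: reduce quadratic growth to the mixed bound \eqref{eq: qgsc-mixed}, get injectivity of $\Amap$ on the face's tangent space from uniqueness of $\xsol$, and charge the component of $x-\xsol$ transverse to the face to the gap $\delta$. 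What your route buys is uniformity and elementarity — no variational-analysis machinery, no dilation, and the polytope and spectrahedron cases come for free rather than by citation; what it costs is that the case-by-case transversal estimates must all be done by hand, and for the nuclear norm ball these are the same technical core the paper avoids by dilation. I checked that the estimates you defer do go through: with $-\nabla f(\Xsol)=\lambda(U_\star V_\star^\top+W)$, $\opnorm{W}\le 1-\delta/\lambda$, and $S=U_\star^\top XV_\star$, the pinching inequality $\nucnorm{P^\perp XQ^\perp}\le 1-\nucnorm{PXQ}\le 1-\tr(S)$ gives $\inprod{\nabla f(\Xsol)}{X-\Xsol}\ge\lambda(1-\tr(S))-(\lambda-\delta)(1-\tr(S))=\delta(1-\tr(S))$, and $1-\tr(S)$ in turn dominates $\fronorm{S-S^\top}^2$ (via the dual characterization of $\nucnorm{\cdot}$ tested against $I+\tau K$ for skew $K$) and the squared Frobenius norms of the off-range blocks (since each has nuclear norm at most $1$). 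So the outline is sound; completing it would be comparable in length to the paper's appendix proof, but would replace three external dependencies with one argument.
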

	We will use the machinery developed in \cite{zhou2017unified} for the case of the group norm.
	We define a few notions and notations for later convenience.
	We define the projection to $\Omega$ as $\mathcal{P}_{\Omega}(x):\, = \arg\min_{v\in \Omega}\twonorm{x-v}$.
	The difference
	of iterates for projected gradient with step size $t$ is defined as $\mathcal{G}_t(x):\,=\frac{1}{t}(x-\proj_{\Omega}(x-\nabla f(x)))$.
	Note that
	$\mathcal{G}_t(x)=0$ implies $x=\xsol$. Finally,
	for an arbitrary set $\mathcal{S}$, we define the distance of $x\in \real^{\dm}$ to it as $\dist(x,\mathcal{S}):\,=\inf_{v\in \mathcal{S}}\twonorm{x-v}$.

	\begin{proof}
		The proof for the polytope appears in \cite[Lemma 2.5]{beck2017linearly}.
		The proof of the Spectrahedron appears in
		\cite[Theorem 6]{DingFeiXuYang2020}.
		Here, we address the case of the group norm ball and Nuclear norm ball.
		Let us first consider the case of the group norm ball with the $\ell_2$ norm.

		\myparagraph{Unit group norm ball} Using \cite[Corollary 3.6]{drusvyatskiy2018error}, we know that if the
		error bound condition holds for some $t,\gamma>0$ then the quadratic growth condition holds with some parameter $\gamma'$.
		The error bound condition with parameter $t,\gamma,\epsilon >0$ means that for all $x\in \Omega$ and $\twonorm{x-\xsol}\leq \epsilon$,
		the following the inequality holds:
		\footnote{The error bound condition considered in \cite[Corollary 3.6]{drusvyatskiy2018error}
			actually require the bound \eqref{eq: erb} to hold for all $x$ in
			the intersection of $\Omega$ and a sublevel set of $f$.
			Note there is
			a difference  between a sublevel set and a neighborhood of $\xsol$.
			However, as $f$ is continuous and $\Omega$ is compact,
			when restricted to $\Omega$ any neighborhood of $\xsol$ is contained in a sublevel set and vice versa.
			Moreover, the quadratic
			growth condition of \cite[Corollary 3.6]{drusvyatskiy2018error} is only required to hold for $x$ in $\Omega$ and a sublevel set of $f$.
			Again, this condition is equivalent to ours as to $\Omega$ is compact and $f$ is continuous.}
		\begin{equation}\label{eq: erb}
		\twonorm{x-\xsol}\leq  \gamma \twonorm{\mathcal{G}_t(x)}.
		\end{equation}
		Define $\bar{y} = \Amap(\xsol)$ and $\bar{h} = \nabla f(\xsol)$.
		Now using \cite[Corollary 1 and Theorem 2]{zhou2017unified},
		we need only verify the following two conditions to establish \eqref{eq: erb}:
		\begin{enumerate}
			\item \emph{Bounded linear regularity:} The two sets $\Gamma_f(\bar{y}) :\,=\{x\in \EE \mid \bar{y} =\Amap(x)\}$ and $\Gamma_\Omega(\bar{h}):\,= \{x\in \EE\mid -\bar{h}\in N_{\Omega}(x) \}$
			satisfy that for every bounded set $B$, there exists a constant $\kappa$ such that
			\[
			\dist(x,\Gamma_f(\bar{y})\cap \Gamma_\Omega (\bar{h})) \leq \kappa \max\{\dist(x,\Gamma_f(\bar{y})), \dist(x,\Gamma_\Omega(\bar{h}))\}, \;\text{for all}\; x\in \Omega.
			\]
			\item \emph{Metric subregularity:}
			there exists $\kappa,\epsilon>0$ such that for all $x$ with $\twonorm{x-\xsol}\leq \epsilon$,
			\begin{equation}\label{eq: metricsubregularity}
			\dist(x,\Gamma_\Omega(\bar{h}))\leq \kappa \dist(-\bar{h},N_{\Omega}(x)).
			\end{equation}
		\end{enumerate}
		Let us first verify bounded linear regularity. First, the subdifferential of
		the Euclidean norm $\twonorm{\cdot}$ is
		\[
		\partial \twonorm{x} = \begin{cases}
		\frac{x}{\twonorm{x}} & x\not= 0,\\
		\mathbf{B}_{\twonorm{\cdot}} & x=0.\\
		\end{cases}.
		\]
		Here $\mathbf{B}_{\twonorm{\cdot}}:\,=\{x \mid \twonorm{x}\leq 1\}$ is unit $\ell_2$ norm ball.

		From the characterization \eqref{eq: normaconerelativeinterior} of the interior of the normal cone, we know that $\bar{h}=\nabla f(\xsol)$
		is nonzero due to strict complementarity, and hence any $x\in \Gamma(\bar{h})$ must satisfy $x\in \partial \Omega$.
		Following the derivation of the normal cone in \eqref{eq: unitgroupnormballnormalcone}, we have
		for any $x\in \partial \Omega$,
		\begin{equation}\label{eq: unitgroupnormballnormalconearbitraryx}
		N_{\Omega}(x)=\big\{y\mid y\in \lambda \left(\prod_{g\in \mathcal{F}(x)}\partial\twonorm{x_g} \times \prod_{g\in \mathcal{G}-\mathcal{F}(x)}\mathbf{B}_{\twonorm{\cdot}}\right),~ \lambda\geq 0 \big\}.
		\end{equation}
		Here the support set $\mathcal{F}(x)$ is the set of groups in the support of $x$.
		Let us pick a $i^\star \in \mathcal{F}(\xsol)$. For each $i\in \mathcal{G}$,
		define the vector $\tilde{h}_i = \frac{-\bar{h}_i}{\twonorm{h_{i^\star}}}\in \real^{|v_i|}$.
		Recall from \eqref{eq: nablagradientgroupdualnorm}, we have
		$\twonorm{\bar{h}_i}$ all equal for $i\in \mathcal{F}(\xsol)$.
		For each $i\in \mathcal{F}(\xsol)$, define $\tilde{h}^i\in \real^{\dm}$ so that it is only supported
		on group $i$ with vector value $\tilde{h}_i$ and is $0$ elsewhere.
		Again, from \eqref{eq: nablagradientgroupdualnorm} and Lemma \ref{lem: positiveDelta},
		we have $\twonorm{\bar{h}^i}$ all equal for $i\in \mathcal{F}(\xsol)$, and is larger than
		those $i$ not in $\mathcal{F}(\xsol)$. To remember the notation, we use $\tilde{h}^i$, upper index $i$, to mean a
		vector in $\real^{\dm}$. We use the notation $\tilde{h}_i$, lower index $i$, to mean the shorter vector in $\real^{|v_i|}$.

		Combining the facts about $\tilde{h}_i$, the formula
		\eqref{eq: unitgroupnormballnormalconearbitraryx}, the
		formula of $\partial \twonorm{\cdot}$, and $x\in \partial \Omega$, we find that actually
		\[
		\Gamma_\Omega(\bar{h})=\{x\mid \sum_{i\in \mathcal{F}(\xsol)} \alpha_i \tilde{h}^i,\alpha_i \in \Delta^{|\mathcal{F}(\xsol)|}\},
		\] which
		is a convex polyhedral. Because $\Gamma_f(\bar{y})$ and $\Gamma_\Omega(\bar{h})$ are both convex polyhedral, we know from
		\cite[Corollary 3]{bauschke1999strong} that bounded linear regularity holds.

		We verify metrical subregularity now. Note that from previous calculation of $\Gamma_\Omega(\bar{h})$, we know
		\[
		\dist(x,\Gamma_\Omega(\bar{h})) ^2= \min_{\alpha_i\in\Delta^{|\mathcal{F}(\xsol)|}}\sum_{i\in \mathcal{F}(\xsol)} \twonorm{x_i - \alpha _i \tilde{h}_i}^2 + \sum_{i\not\in \mathcal{F}(\xsol)}\twonorm{x_i}^2.
		\]
		By choosing
		$\epsilon$ sufficiently small, say $\epsilon < \epsilon_0$,
		we have $\mathcal{F}(x)\supseteq \mathcal{F}(\xsol)$.
		The quantity,  $\dist(\bar{h},N_{\Omega}(x))$, on the RHS of \eqref{eq: metricsubregularity}
		for all $x$ within an $\epsilon$ neighborhood of the solution $\xsol$
		satisfies that
		\[
		\dist^2(\bar{h},N_{\Omega}(x))=\begin{cases}
		+\infty ,& x\not\in \Omega, \\
		\twonorm{\bar{h}}^2, & x\in \mbox{int}(\Omega),
		\end{cases}
		\]
		where $\mbox{int}(\Omega)$ is the interior of $\Omega$. For $x\in \partial \Omega$, $\dist^2(\bar{h},N_{\Omega}(x))$ satisfies that
		\[
		\dist^2(\bar{h},N_{\Omega}(x)) = \twonorm{h_{i^\star}}^2
		\min_{\lambda \geq 0,v_i\in \mathbf{B}_{\twonorm{\cdot}}} \sum_{i\in \mathcal{F}(x)}\twonorm{\tilde{h}_i-  \lambda\tilde{x}_i}^2
		+\sum_{i\notin \mathcal{F}(x)}\twonorm{\tilde{h}_i -\lambda v_i}^2,
		\]
    where $\tilde{x} = \frac{x}{\twonorm{x}}$, and $\tilde{x}_i$ is the vector with components in group $i$.
		The case of $x\not\in \Omega$ is trivial.
		The case of $x\in \mbox{int}(\Omega)$ can be proved by
		choosing a large enough $\kappa$, say $\kappa > K_0$,
		as $\dist(x,\Gamma_\Omega(\bar{h})) ^2$ is upper bounded for any $x\in \mbox{int}(\Omega)$,
		and $\dist^2(\bar{h},N_{\Omega}(x))$ in this case is fixed.
		We are left with the most challenging case $x\in \partial \Omega$,
		where the normal cone is non-trivial.
		First, we upper bound $\dist(x,\Gamma_\Omega(\bar{h})) ^2$ by choosing $\alpha_i = \twonorm{x_i}$.
		The numbers $\alpha_i$ sum to one because $x\in \partial \Omega$.
		In this case,
		$\dist(x,\Gamma_\Omega(\bar{h})) ^2$ satisfies the bound
		\begin{equation}
		\begin{aligned} \label{eq: distxGammaOmegabarg}
		\dist(x,\Gamma_\Omega(\bar{h})) ^2 &\leq \sum_{i\in \mathcal{F}(\xsol)}\twonorm{x_i -\twonorm{x_i}\tilde{h}_i}+ \sum_{i\not\in \mathcal{F}(\xsol)}\twonorm{x_i}^2 \\
		& \overset{(a)}{=}  \sum_{i\in \mathcal{F}(\xsol)}\twonorm{x_i}\twonorm{\tilde{h}_i-\tilde{x}_i}^2+ \sum_{i\in \left(\mathcal{F}(x)-\mathcal{F}(\xsol)\right)}\twonorm{x_i}^2,
		\end{aligned}
		\end{equation}
		where step $(a)$ is due to  $\mathcal{F}(x)\supseteq \mathcal{F}(\xsol)$ by our choice of small enough $\epsilon$.
		We next lower bound $\dist^2(\bar{h},N_{\Omega}(x))$ by ignoring the term not in $\mathcal{F}(x)$:
		\[
		\dist^2(\bar{h},N_{\Omega}(x))\geq \twonorm{g_{i^\star}}^2
		\min_{\lambda \geq 0} \sum_{i\in \mathcal{F}(x)}\twonorm{\tilde{h}_i- \lambda\tilde{x}_i}^2
		\]
		Now if $\mathcal{F}(x)=\mathcal{F}(\xsol)$, then it is tempting to set $\lambda=1$ above and compare the
		inequality with \eqref{eq: distxGammaOmegabarg}
		to claim victory. This does not work directly due to the minimization over $\lambda$ and the fact  $\mathcal{F}(x)\supseteq \mathcal{F}(\xsol)$.

		Let $\lambda_\star = \argmin _{\lambda \geq 0} \sum_{i\in \mathcal{F}(x)}\twonorm{\tilde{h}_i-  \frac{\lambda x_i}{\twonorm{x_i}}}^2$.
		In this case, we have an explicit formula of $\lambda_\star$:
		\[
		\lambda_\star = \max\left \{0,\frac{\sum_{i\in \mathcal{F}(x)}\inprod{\tilde{h}_i}{\tilde{x}_i}}{|\mathcal{F}(x)|}\right \}.
		\]
		If $\lambda_\star =0$, then we can simply pick some $\kappa>K_0$ as done in the case of $x\in \mbox{int}(\Omega)$. So we assume $\lambda_\star >0$ in the following.
		Next let $\lambda_{i} = \argmin_{\lambda\geq 0}\twonorm{\tilde{h}_i-  \frac{\lambda x_{i}}{\twonorm{x_{i}}}}^2$ for each
		$i\in \mathcal{F}(\xsol)$. With such choice of $\lambda_i$ and $\lambda^\star$, we can further lower bound
		$\dist^2(\bar{h},N_{\Omega}(x))$ by splitting the terms in $\mathcal{F}(x)$ and those are not:
		\begin{equation}
		\begin{aligned}\label{eq: distxNormalConeOmegabarg}
		\dist^2(\bar{h},N_{\Omega}(x))\geq \twonorm{g_{i^\star}}^2\left(
		\underbrace{ \sum_{i\in \mathcal{F}(\xsol)} \twonorm{\tilde{h}_i-  \frac{\lambda_{i} x_i}{\twonorm{x_i}}}^2}_{R_1}
		+\underbrace{\sum_{i \in \mathcal{F}(x) \setminus \mathcal{F}(\xsol)}\twonorm{\tilde{h}_i-\frac{\lambda_{\star}x_i}{\twonorm{x_i}}}^2}_{R_2}\right).
		\end{aligned}
		\end{equation}
		We bound the two terms separately. Let us
		first deal with $R_1$. From the expression of normal cone \eqref{eq: unitgroupnormballnormalconearbitraryx}
		and $-\bar{h}\in N_{\Omega}(\xsol)$ by our assumption,
		we know $\tilde{h}_i = \frac{(\xsol)_i}{\twonorm{(\xsol)_i}}$ for every $i\in \mathcal{F}(\xsol)$.
		Hence by choosing a (possibly smaller) $\epsilon$, say $\epsilon < \epsilon_1$,
		we can ensure that for any $x$ within an $\epsilon_1$ neighborhood of the solution $\xsol$,
		$\inprod{\tilde{x}_i}{\tilde{h}_i}\geq 0$ all for $i\in \mathcal{F}(\xsol)$.
		Moreover, for a small enough $\epsilon_1$, we know each $\lambda_i =\inprod{\tilde{x}_i}{\tilde{h}_i}$
		and is very close to $1$. Thus the condition of
		Lemma \ref{lem: additionalLemmaGroupNormBall} is fulfilled, and we have
		\begin{equation}\label{eq: R_1groupnormball}
		R_1\geq \frac{1}{2} \sum_{i\in \mathcal{F}(\xsol)}\twonorm{x_i} \twonorm{\tilde{h}_i-\tilde{x}_i}^2.
		\end{equation}

		Next, to deal with $R_2$, let us examine the expression of
		$
		\lambda_\star= \frac{\sum_{i\in \mathcal{F}(x)}\inprod{\tilde{h}_i}{\tilde{x}_i}}{|\mathcal{F}(x)|}.
		$
		Recall $\inprod{\tilde{h}_i}{\tilde{x}_i}$ is close to $1$ for small enough $\epsilon$.
		Due to strict complementarity, for each $i\in\mathcal{F}(x)\setminus\mathcal{F}(\xsol)$,
		we know $\twonorm{\tilde{h}_i}<1-\delta_0$ for some $\delta_0>0$ that depends only on $\bar{h}$.
		Combining these two facts, we know that  $i' = \arg\min_{i\in \mathcal{F}(x)} \inprod{\tilde{h}_i}{\tilde{x}_i}$ must
		belong to $\mathcal{F}(x)\setminus\mathcal{F}(\xsol)$.
		Moreover, by choosing an even smaller $\epsilon$, say $\epsilon<\epsilon_2$,
		we have $\lambda_\star \geq \delta_1 + \min_i \inprod{\tilde{h}_i}{\tilde{x}_i}$ for some $\delta_1>0$ that
		only depends on $\bar{h}$, $\delta$, and $\epsilon_2$. We can now lower bound $R_2$ as follows:
		\begin{equation}\label{eq: R_2groupnormball}
		\begin{aligned}
		R_2\geq & \twonorm{\tilde{h}_{i'}-(\delta_1+ \inprod{\tilde{h}_i}{\tilde{x}_i})\tilde{x}_{i'}}^2 \\
		       =& \twonorm{\tilde{h}_i - \inprod{\tilde{h}_i}{\tilde{x}_i}\tilde{x}_{i'}}^2+\delta_1^2\\
		        &+ 2\delta\underbrace{\inprod{\tilde{h}_i - \inprod{\tilde{h}_i}{\tilde{x}_i}\tilde{x}_{i'}}{\tilde{x}_{i'}}}_{=0}\\
		\geq    & \delta_1^2.
		\end{aligned}
		\end{equation}
		Combining the bounds \eqref{eq: R_1groupnormball} and \eqref{eq: R_2groupnormball} on $R_1$ and $R_2$, we find that
		\begin{equation}
		\begin{aligned}\label{eq: distxNormalConeOmegabargFinal}
		\dist^2(\bar{h},N_{\Omega}(x))
		&\geq \frac{\twonorm{g_{i^\star}}^2}{2}
		\sum_{i\in \mathcal{F}(\xsol)} \twonorm{\tilde{h}_i-\tilde{x}_i}^2 + \twonorm{g_{i^\star}}^2 \delta_1^2\\
		&\overset{(a)}{\geq}  \frac{\twonorm{g_{i^\star}}^2}{2}
		\sum_{i\in \mathcal{F}(\xsol)} \twonorm{x_i} \twonorm{\tilde{h}_i-\tilde{x}_i}^2 + \twonorm{g_{i^\star}}^2 \delta_1^2\sum_{i \in \mathcal{F}(x) \setminus \mathcal{F}(\xsol)}\twonorm{x_i}^2\\
		\end{aligned}
		\end{equation}
		Here, for the step $(a)$, we use $\twonorm{x_i}\leq 1$ as $x\in \partial \Omega$.
		Hence, by taking $\epsilon=\min(\epsilon_1,\epsilon_2)$ and
		$\kappa = \max\{K_0,\twonorm{g_{i^\star}}^2\delta_1,\frac{\twonorm{g_{i^\star}}^2}{2}\}$, and comparing \eqref{eq: distxNormalConeOmegabargFinal}
		with \eqref{eq: distxGammaOmegabarg}, a bound on $\dist(x,\Gamma_\Omega(\bar{h})) ^2$,
		we see that metric subregularity is satisfied and our proof for unit group norm ball is complete.

		Finally, we consider the unit nuclear norm ball.

		\myparagraph{Unit nuclear norm ball} Let us first illustrate the main idea.
		We shall utilize the quadratic growth result proved in \cite[Theorem 6]{DingFeiXuYang2020} for
		spectrahedron. To transfer our setting to spectrahedron,
		we use a dilation argument with its relating lemmas \cite[Lemma 3]{ding2020regularity}
		and \cite[lemma 1]{jaggi2010simple}. We now spell out all the details.

		Let $\tilde{n} = \dm_1+\dm_2$. For any $\tilde{X}\in \symMat^{\tilde{\dm}}$, denote its eigenvalues as
		$\lambda_1(\tilde{X})\geq \dots  \geq \lambda_{\tilde{\dm}}(\tilde{X})$. Also, for any $X\in \mathbf{B}_{\nucnorm{\cdot}}$,
		denote its singular value decomposition as
		$X = U_X\Sigma_X V_X^\top$ where $U_X\in \real^{\dm_1\times r_X}$,
    $V_X\in \real^{\dm_2\times r_X}$, and
		$r_X = \rank(X)$.
    Define the dilation $X^\sharp\in \symMat^{\tilde{n}}$ of a $X\in \real^{\dm_1\times \dm_2}$
		as
		\begin{equation}\label{lem: dilation}
		X^\sharp =\frac{1}{2} \begin{bmatrix}
		X_1 & X \\
		X^\top & X_2
		\end{bmatrix},
		\end{equation}
		where the $X_1 = U_X(\Sigma_X + \xi_X I)U_X^\top$, and $X_2 =V_X (\Sigma_X +\xi_XI) V_X^\top$. The number $\xi_X\geq 0$
		is chosen so that $X^\sharp$ has trace $1$. Note that $X^\sharp$ is positive semidefinite as  $X^\sharp= \frac{1}{2}\begin{bmatrix}
		U_X \\V_X
		\end{bmatrix}\Sigma _X [U_X^\top V_X^\top]
		+
		\frac{\xi_X}{2}
		\begin{bmatrix}
		U_XU_X^\top & 0 \\
		0 & V_XV_X^\top
		\end{bmatrix}.$
		For any $\tilde{Y} = \frac{1}{2}\begin{bmatrix}
		Y_1 & Y \\
		Y^\top & Y_2
		\end{bmatrix} \in \symMat^{\tilde{\dm}}$ with $Y_1\in \symMat^{\dm_1}$,
		and $Y_2\in \symMat^{\dm_2}$, we denote its off diagonal component as
		\[
		\tilde{Y}_\flat :\,= Y.
		\]
		Note here that $X^\sharp$ denotes the dilation of a matrix $X\in \real^{\dm_1\times\dm_2}$,
		while $\tilde{X}$ means a generic matrix in $\symMat^{\tilde{\dm}}$ which is not necessarily related to
		$X$. 
    We also have the relation that $(X^\sharp)_\flat = X$ for any  $X\in \real^{\dm_1\times\dm_2}$.

		Consider the problem
		\begin{equation}\label{opt: NucSpectraProblem}
		\begin{array}{ll}
		\mbox{minimize} & \tilde{f}(\tilde{X}):\,= f(\tilde{X}_\flat) =g(\Amap (\tilde{X}_\flat)) +\inprod{C}{(\tilde{X})_\flat} \\
		\mbox{subject to} & \tilde{X} =1\quad \tilde{X}\succeq 0.
		\end{array}
		\end{equation}
		We claim that it satisfies strict complementarity and its solution $\tilde{\Xsol}$
		is unique and is equal to $\Xsol^\sharp$.
    Suppose the claim is proved for the moment. Note that $\Xsol\in \partial\Omega$
		implies that $\rank(\Xsol^\sharp)=\rank(\Xsol)<\tilde{\dm}$. Hence,
		the condition of \cite[Theorem 6]{DingFeiXuYang2020} is fulfilled, and we know there is some
		$\tilde{\gamma}>0$, such that for all $\tilde{X}\in \mathcal{SP}^{\tilde{n}}$, we have
		\[
		\tilde{f}(\tilde{X}) -\tilde{f}(\Xsol^\sharp) \geq\tilde{\gamma} \fronorm{\tilde{X}-\Xsol^\sharp}.
		\]
		Hence, for any $X\in \Omega$, by construction of $\tilde{f}$, we have
		\[
		f(X)-f(\Xsol) =\tilde{f}(X^\sharp)-\tilde{f}(\Xsol^\sharp) \geq \tilde{\gamma} \fronorm{X^\sharp-\Xsol^\sharp}^2\geq \frac{\tilde{\gamma}}{2}\fronorm{X-\Xsol}^2.
		\]
		This proves quadratic growth.

		We now verify our claim that $\Xsol^\sharp$ is the unique solution to \eqref{opt: NucSpectraProblem} and
		$\Xsol^\sharp\in \partial\mathcal{SP}^{\tilde{\dm}}$ with $\nabla \tilde{f}(\Xsol^\sharp)\in \partial N_{\mathcal{SP}^{\tilde{\dm}}}(\Xsol^\sharp)$.
		First, consider feasibility and whether $\Xsol^\sharp \in \partial \mathcal{SP}^{\tilde{\dm}}$.
		The condition $\Xsol\in \partial \Omega$ implies that $1\leq \rsol<\min(\dm_1,\dm_2)$ and $\nucnorm{\Xsol}=1$.
		Hence we do have
		$\tr(X^\sharp)=1$ and $\Xsol^\sharp\in  \partial\mathcal{SP}^{\tilde{\dm}}$ as $\rank(\Xsol^\sharp) =\rank(\Xsol)= \rsol <\dm_1+\dm_2$. Next, consider
		optimality. Given any $\tilde{X}\in \mathcal{SP}^{\tilde{n}}$, we may write it as
		 $\tilde{X} = \frac{1}{2} \begin{bmatrix}
		X_1 & X \\
		X^\top & X_2
		\end{bmatrix}$. By \cite[Lemma 1]{jaggi2010simple}, we have
		\begin{equation}
		\nucnorm{X} \leq 1.
		\end{equation}
		To see $\Xsol^\sharp$ is optimal for \eqref{opt: NucSpectraProblem}, note that
		\[
		f((\tilde{X})_\flat) = f(X)\overset{(a)}{\geq}f(\Xsol)= f((\Xsol^\sharp)_\flat),
		\]
		where step $(a)$ is due to optimality of $\Xsol$ in \eqref{opt: mainProblem} and $X$ is feasible as just argued.
		Thirdly, we argue that $\Xsol^\sharp$ is a unique solution to \eqref{opt: NucSpectraProblem}.
		For any optimal solution $\tilde{\Xsol}=  \frac{1}{2} \begin{bmatrix}
		X_1^\star & X_0 \\
		X^\top_0 & X_2^\star
		\end{bmatrix}$ of \eqref{opt: NucSpectraProblem}, we have $X_0$ is optimal to \eqref{opt: mainProblem} as
		\[
		f(X_0) = f((\tilde{X}_\star)_\flat) \overset{(a)}{=} f((\Xsol^\sharp)_\flat)=f(\Xsol),
		\]
		where step $(a)$ is because $\Xsol^\sharp$ is optimal to  \eqref{opt: NucSpectraProblem}.
		Hence due to uniqueness of $\Xsol$, we know $X_0=\Xsol$. Because
		$\nucnorm{\Xsol}=1$, using \cite[Lemma 3]{ding2020regularity}, we know in fact $\Xsol^\sharp = \tilde{\Xsol}$ and uniqueness of solution to
		\eqref{opt: NucSpectraProblem} is proved.
		Finally, we verify strict complementarity that $\nabla \tilde{f}(\Xsol^\sharp)\in \mbox{relint}\left( N_{\mathcal{SP}^{\tilde{\dm}}}(\Xsol^\sharp)\right)$.
		Recall from \eqref{eq: sumRuleRelativeInteriorSpec}, that we need to show
		\begin{equation*}\label{eq: sumRuleRelativeInteriorSpecNuc}
		-\nabla \tilde{f}(\Xsol^\sharp) \in \mbox{relint}(N_{\mathcal{SP}^{\tilde{\dm}}}(\tilde{\Xsol}) )= \{sI\mid s\in \real\} + \{-\tilde{Z}\mid \tilde{Z}\succeq 0,\range(\tilde{Z})= \nullspace(\Xsol^\sharp)\}.
		\end{equation*}
		Using the definition of $\Xsol^\sharp$, we know
		\[
		\nabla \tilde{f}(\Xsol^\sharp)  = \begin{bmatrix}
		0 & \nabla f(\Xsol) \\
		\nabla f(\Xsol)^\top & 0
		\end{bmatrix}.
		\]
		Recall from Lemma \ref{lem: positiveDelta}, we have
		$\sigma_{1}(\nabla f(\Xsol))=\dots =\sigma_{\rsol}(\nabla f(\Xsol))=\delta +\sigma_{\rsol+1}(\nabla f(\Xsol))$ for some gap $\delta>0$.
		Hence we see that $\nabla \tilde{f}(\Xsol^\sharp)$ has all its smallest $\rsol$ eigenvalues equal as $-\sigma_{\rsol}(\nabla f(\Xsol))$
		and the
		gap between its $\rsol$-th smallest eigenvalue and the $\rsol+1$-th eigenvalue is simply $\delta>0$. Moreover, let the singular value decomposition of
		$\Xsol$ as $\Xsol = U_\star \Sigma V_\star^\top $ with $U_\star \in \real^{\dm_1\times \rsol}$ and $V_\star \in \real^{\dm_2\times \rsol}$.
		From the description of normal cone of nuclear norm ball in \eqref{eq: DescriptionOfRelativeInteriorOfNuc}, we know
		$U_\star ,-V_\star$ are the matrices formed by the top $\rsol$ left and right vectors of $\nabla f(\Xsol)$.
		Hence, the bottom $\rsol$ eigenvector of $\nabla f(\tilde{\Xsol})$ is simply $\frac{1}{\sqrt{2}}\begin{bmatrix}
		U_\star \\ V_\star.
		\end{bmatrix} $. Since $\range(\Xsol^\sharp)= \range(\begin{bmatrix}
		U_\star \\ V_\star.
		\end{bmatrix})$, we may take $s = \sigma_1(\nabla f(\Xsol))$ and $\tilde{Z} =\sigma_1(\nabla f(\Xsol)) I + \nabla\tilde{f}(\Xsol^\sharp)$.
		Using the eigengap condition on $\nabla \tilde{f}(\Xsol^\sharp)$, we see $\range(\tilde{Z}) = \nullspace(\Xsol^\sharp)$ and our claim is proved.
	\end{proof}

	\subsubsection{Additional Lemma for quadratic growth}
	We establish the following lemma for the proof of unit group norm ball.
	\begin{lem}\label{lem: additionalLemmaGroupNormBall}
		For any two $x,y\in \real^d$ with $\ell_2$ norm one, and $a:=\inprod{x}{y}\geq 0$,
		we have
		\[
		2\min_{\lambda\geq 0} \twonorm{x-\lambda y}^2 \geq \twonorm{x-y}^2.
		\]
	\end{lem}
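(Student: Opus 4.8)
The plan is to reduce both sides to explicit functions of the single scalar $a=\inprod{x}{y}$ and then verify an elementary inequality. First I would expand, using $\twonorm{x}=\twonorm{y}=1$,
\[
\twonorm{x-\lambda y}^2 = 1 - 2\lambda a + \lambda^2,
\]
a convex quadratic in $\lambda$ whose unconstrained minimizer is $\lambda=a$. Since we are given $a\geq 0$, this minimizer is feasible for the constraint $\lambda\geq 0$, so
\[
\min_{\lambda\geq 0}\twonorm{x-\lambda y}^2 = 1 - a^2.
\]
On the other hand $\twonorm{x-y}^2 = 2 - 2a$.

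With these two identities in hand, the claimed inequality $2\min_{\lambda\geq 0}\twonorm{x-\lambda y}^2 \geq \twonorm{x-y}^2$ becomes $2(1-a^2)\geq 2-2a$, i.e.\ $a - a^2 \geq 0$, i.e.\ $a(1-a)\geq 0$. This holds because $a\geq 0$ by hypothesis and $a=\inprod{x}{y}\leq \twonorm{x}\twonorm{y}=1$ by Cauchy–Schwarz, so $1-a\geq 0$. This completes the argument.

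There is essentially no obstacle here; the only two points that require (trivial) care are that the feasibility of the unconstrained minimizer uses the hypothesis $a\geq 0$, and that the final step uses $a\leq 1$, which is exactly Cauchy–Schwarz together with the unit-norm assumption. A full write-up is just the three displayed computations above followed by the one-line inequality $a(1-a)\geq 0$.
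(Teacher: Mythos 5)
Your proposal is correct and follows essentially the same route as the paper's own proof: identify the constrained minimizer $\lambda=a$ (feasible because $a\geq 0$), expand both sides in terms of $a$, and reduce the claim to $2a(1-a)\geq 0$ via Cauchy--Schwarz. No issues.
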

	\begin{proof}
		Simple calculus reveals that the optimal solution $\lambda^\star$ of
		the LHS of the inequality is $\lambda^\star = a\geq 0$.
		We know $a\in[0,1]$ due
		to Cauchy-Schwarz and our assumption on $a$.
		Direct calculation of the difference yields
		\begin{equation}
		\begin{aligned}
		2\min_{\lambda\geq 0} \twonorm{x-\lambda y}^2 -\twonorm{x-y}^2 & =2+2a^2 -4a^2-2+2a \\
		& = -2a^2+2a \geq 0,
		\end{aligned}
		\end{equation}
		where the last line is due to $a\in[0,1]$.
	\end{proof}

	\subsection{Proofs of Theorem \ref{thm: nonexponentialFiniteConvergence} for group norm ball}\label{sec: groupnormballproofconvergence}
	\begin{proof}
		Let us now consider Algorithm \ref{alg k-FW, polytope} whose constraint set $\Omega$ is a
		unit group norm ball with \emph{arbitrary} base norm $\norm{\cdot}$.
		Using quadratic growth $(a)$, Theorem \ref{thm: normalFWresult} in the second step $(b)$,
		and the choice of $T$ in the following step $(c)$, the iterate $x_t$ with $t\geq T$ satisfies that
		\begin{equation}\label{eq: xtclosetooptimalxgrounormball}
		\norm{x_t - \xsol} \overset{(a)}{\leq} \sqrt{\frac{1}{\gamma} h_t}
		\overset{(b)}{\leq} \sqrt{\frac{L_f D^2}{\gamma T}} \overset{(c)}{\leq} \frac{\delta}{2L_fD}.
		\end{equation}
		Next recall the definition of $\mathcal{F}(\xsol)$ implies $(\xsol)_{g_\star}\not=0$ for any $g_\star\in \mathcal{F}(\xsol)$.
		The optimality conditions and
		$\norm{\xsol}_{\mathcal{G}}=1$ (due to $\xsol\in \partial \Omega$)
		implies that for every $g_\star \in \mathcal{F}(\xsol)$,
		\[
		\inprod{[\nabla f(\xsol)]_{g_\star}}{[\xsol]_{g_\star}} = \norm{[\nabla f(\xsol)]_{g_\star}}_*\norm{[\xsol]_{g_\star}}, \quad
		\text{and} \quad
		\inprod{\nabla f(\xsol)}{\xsol} = \norm{[\nabla f(\xsol)]_{g_\star}}_*.
		\]
		For any $g_\star \in \mathcal{F}(\xsol)$, define a vector $\xsol^{g_\star}\in \Omega$ as
		$\xsol^{g_\star}:=
		\begin{cases}
		\left(\frac{[\xsol]_{g_\star}}{\norm{[\xsol]_{g_{\rsol}}}}\right)_i ,& i \in g_\star, \\
		0                     , & i\not\in g_\star.
		\end{cases}
		$ So $ \xsol^{g_\star}\in\Omega$ is an extended vector of the normalized
		vector $\frac{[\xsol]_{g_\star}}{\norm{[\xsol]_{g_{\rsol}}}}$. Combining this definition
		with
		previous two equalities, we see
		\begin{equation}\label{eq: groupnormfractionnormalizeddualnormnablafxsol}
		\inprod{[\nabla f(\xsol)]_{g_\star}}{\xsol^{g_\star}}  = \inprod{\nabla f(\xsol)}{\xsol}.
		\end{equation}

		Now, for any $t\geq T$, we have for any group $g_\star$ in $\mathcal{F}(\xsol)$,
		and any vector $v \in \Omega$ that
		is in $\mathcal{F}^c(\xsol)$,
		\begin{equation}\label{eq: correctExtremePointgroupnormball}
		\begin{aligned}
		\inprod{\nabla f(x_t)}{\xsol^{g_\star}}-\inprod{\nabla f(x_t)}{v} & = \inprod{\nabla f(\xsol)}{\xsol^{g_\star}-v} + \inprod{\nabla f(x_t) - \nabla f(\xsol)}{v-u}\\
		&\overset{(a)}{\leq}   -\delta  + \inprod{\nabla f(x_t) - \nabla f(\xsol)}{v-u} \overset{(b)}{\leq} -\frac{\delta}{2}.
		\end{aligned}
		\end{equation}
		Here in step $(a)$, we use the definition of $\delta$ in \eqref{eq: polytopeDeltaDef} and \eqref{eq: groupnormfractionnormalizeddualnormnablafxsol}.
		In step $(b)$, we use the bound in \eqref{eq: xtclosetooptimalx}
		, Lipschitz continuity of $\nabla f(x)$, and $\norm{v-u}\leq D$.

		Thus, the $k$LOO step will produce all the groups in $\mathcal{F}(\xsol)$ as $k\geq \rsol$ after $t\geq T$, and so $\xsol$ is a feasible and optimal solution
		of the optimization problem in the $k$ direction search step.
		Hence Algorithm \ref{alg k-FW, polytope} finds the optimal solution $\xsol$ within $T+1$ steps.
	\end{proof}

	\subsection{Proofs of Theorem \ref{thm: LinearConvergence}}\label{sec: spectrahedronNuclearNormproofconvergence}
	We state one lemma that is critical to our proof of linear convergence. It is proved in Section \ref{sec: nucnormLemma}.
	\begin{lem}\label{lem: nucnormLemma}
		Given $Y\in \real^{\dm_1\times \dm_2}$ with $\sigma_{r}(Y)-\sigma_{r+1}(Y)=\delta>0$. Denote the matrices formed by the top $r$ left and right singular vectors of $Y$
		as $U\in \real^{\dm_1 \times r}$, $V\in \real^{\dm_2\times r}$ respectively.
		Then for any $X\in \real^{\dm_1\times \dm_2}$ with $\nucnorm{X}= 1$,
		there is an $S\in \real^{r\times r}$ with $\nucnorm{S}=1$ such that
		\[
		\inprod{X-USV^\top}{Y}\geq \frac{\delta}{2}\fronorm{X-USV^\top}^2.
		\]
	\end{lem}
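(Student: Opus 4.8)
The plan is to work in the singular-value basis of $Y$ and mimic the argument used for the spectrahedron (cf.\ \cite{DingFeiXuYang2020}), reducing the claim to a single scalar estimate that is then controlled by the singular gap $\delta$. Write the SVD $Y=\sum_{i\ge 1}\sigma_i(Y)u_iv_i^\top$, put $U=[u_1,\dots,u_r]$, $V=[v_1,\dots,v_r]$, complete to orthonormal bases $[U,U_\perp]$ of $\real^{\dm_1}$ and $[V,V_\perp]$ of $\real^{\dm_2}$, and decompose $X$ into blocks $A:=U^\top X V$, $B:=U^\top X V_\perp$, $C:=U_\perp^\top X V$, $D:=U_\perp^\top X V_\perp$. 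In this basis $Y$ is block diagonal with top block $\Sigma_1=\diag(\sigma_1(Y),\dots,\sigma_r(Y))$ and bottom block $\Sigma_2$ carrying the remaining singular values, so $\inprod{X}{Y}=\inprod{A}{\Sigma_1}+\inprod{D}{\Sigma_2}$, and for any $S\in\real^{r\times r}$ one has $\inprod{USV^\top}{Y}=\inprod{S}{\Sigma_1}$ and $\fronorm{X-USV^\top}^2=\fronorm{A-S}^2+\fronorm{B}^2+\fronorm{C}^2+\fronorm{D}^2$, since $USV^\top$ only occupies the $(1,1)$ block. The gap hypothesis translates to $\Sigma_1\succeq(\sigma_{r+1}(Y)+\delta)I$ and $\opnorm{\Sigma_2}\le\sigma_{r+1}(Y)$.

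Next I would construct $S$ by correcting $A$ along the leading singular direction of $\Sigma_1$: in the $\Sigma_1$-diagonalizing coordinates set $S=A-t\,e_1e_1^\top$, and take $t=t^\star\ge 0$ to be the smallest value for which $\nucnorm{S}=1$. Such a $t^\star$ exists because $t\mapsto\nucnorm{A-te_1e_1^\top}$ is continuous, equals $\nucnorm{A}\le\nucnorm{A}+\nucnorm{D}\le\nucnorm{X}=1$ at $t=0$ (the pinching inequality, from the fact that zeroing the off-diagonal blocks of $X$ is a contraction in the nuclear norm), and tends to $+\infty$. Two elementary properties of $t^\star$ are used: $t^\star\ge 1-\nucnorm{A}\ge\nucnorm{D}$ (from $1=\nucnorm{A-t^\star e_1e_1^\top}\le\nucnorm{A}+t^\star$ and pinching), and $t^\star\le 2$ (from $1=\nucnorm{A-t^\star e_1e_1^\top}\ge\opnorm{A-t^\star e_1e_1^\top}\ge t^\star-\opnorm{A}$ with $\opnorm{A}\le\opnorm{X}\le 1$). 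With this $S$, a direct computation gives $\inprod{X-USV^\top}{Y}=\inprod{D}{\Sigma_2}+t^\star\sigma_1(Y)$ and $\fronorm{X-USV^\top}^2=(t^\star)^2+\fronorm{B}^2+\fronorm{C}^2+\fronorm{D}^2$. Combining $\sigma_1(Y)\ge\sigma_{r+1}(Y)+\delta$, $\inprod{D}{\Sigma_2}\ge-\sigma_{r+1}(Y)\nucnorm{D}$, and $t^\star\ge\nucnorm{D}$ yields $\inprod{X-USV^\top}{Y}-\tfrac{\delta}{2}(t^\star)^2\ge\sigma_{r+1}(Y)\bigl(t^\star-\nucnorm{D}\bigr)+\tfrac{\delta}{2}t^\star(2-t^\star)\ge\tfrac{\delta}{2}t^\star(2-t^\star)$, so the target inequality reduces to the single scalar estimate
\[
\fronorm{B}^2+\fronorm{C}^2+\fronorm{D}^2\;\le\;t^\star(2-t^\star).
\]

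I expect this last inequality to be the main obstacle. It says that the ``off-diagonal energy'' of $X$ in the singular basis of $Y$ is controlled by the room $t^\star$ available to pad $A$ up to unit nuclear norm, and it is the nuclear-norm counterpart of the positive-semidefinite block estimates (of the form $\fronorm{X_{12}}^2\le(1-\tau)\tau$ and $\fronorm{X_{22}}^2\le\tau^2$) used in the spectrahedral proof. Establishing it cleanly requires using $\nucnorm{X}=1$ beyond the simple pinching bounds --- in particular the relations $\fronorm{X}\le\nucnorm{X}=1$, $\nucnorm{B}+\nucnorm{C}\le\nucnorm{X}$ (obtained from the signed block conjugation $M\mapsto\tfrac12(M-JMJ)$ with $J$ block diagonal of $\pm1$'s), and the dependence of $t^\star$ on the singular values of $A$. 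An alternative and possibly cleaner route, paralleling the proof of quadratic growth in Theorem~\ref{thm: qgundersc}, is to dilate: apply the spectrahedral version of this lemma to $\tilde Y=\begin{bmatrix}0&Y\\ Y^\top&0\end{bmatrix}$, whose bottom-$r$ eigenspace is spanned by $\tfrac{1}{\sqrt 2}\begin{bmatrix}u_i\\ -v_i\end{bmatrix}$ and whose eigengap after $r$ is again $\delta$, together with the dilation $X^\sharp$ of $X$, and then project the resulting inequality onto the off-diagonal block; here one must track constants with care, since the Frobenius norm is only partly preserved under $X\mapsto X^\sharp$ and $\tilde X\mapsto\tilde X_\flat$.
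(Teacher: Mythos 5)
Your block decomposition, the identities $\inprod{X-USV^\top}{Y}=t^\star\sigma_1(Y)+\inprod{D}{\Sigma_2}$ and $\fronorm{X-USV^\top}^2=(t^\star)^2+\fronorm{B}^2+\fronorm{C}^2+\fronorm{D}^2$, and the bounds $\nucnorm{D}\le t^\star\le 2$ all check out, but the argument bottoms out exactly where you say it does: the inequality $\fronorm{B}^2+\fronorm{C}^2+\fronorm{D}^2\le t^\star(2-t^\star)$ is asserted as "the main obstacle" and never proved, and it carries essentially all of the content of the lemma. This is a genuine gap, not a routine detail. The natural route to it --- $\fronorm{B}^2+\fronorm{C}^2+\fronorm{D}^2\le 1-\fronorm{A}^2$ from $\fronorm{X}\le\nucnorm{X}=1$, followed by $(1-t^\star)^2\le\fronorm{A}^2$ --- does not work for your choice of $S$: taking $A=\tfrac{c}{r}I_r$ with $r\ge 5$ gives $1-t^\star=c(1-\tfrac{2}{r})>\tfrac{c}{\sqrt r}=\fronorm{A}$, so that sufficient condition fails and one would need a genuinely different way of exploiting $\nucnorm{X}=1$ (or a different correction than $t\,e_1e_1^\top$). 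Until that scalar estimate is established, the lemma is not proved.

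The "alternative route" you sketch in your last sentences is in fact the paper's proof. It applies the spectrahedral version of the lemma (\cite[Lemma 5]{DingFeiXuYang2020}) to $\tilde Y=\begin{bmatrix}0&Y\\ Y^\top&0\end{bmatrix}$, whose bottom $r$ eigenvectors are $\tfrac{1}{\sqrt 2}\begin{bmatrix}U\\-V\end{bmatrix}$ with eigengap $\delta$ after position $r$, and to the dilation $\tilde X=\tfrac12\begin{bmatrix}U_X\Sigma_XU_X^\top & X\\ X^\top & V_X\Sigma_XV_X^\top\end{bmatrix}$, which lies in the spectrahedron since $\nucnorm{X}=1$. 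The constant-tracking issue you worry about is resolved by a separate auxiliary lemma (Lemma \ref{lem: svdfronormdifference}), which shows $\fronorm{U_1S_1U_1^\top-U_2S_2U_2^\top}^2+\fronorm{V_1S_1V_1^\top-V_2S_2V_2^\top}^2\ge 2\fronorm{U_1S_1V_1^\top-U_2S_2V_2^\top}^2$ by reducing the difference to $\fronorm{S_2^{1/2}(U_2^\top U_1-V_2^\top V_1)S_1^{1/2}}^2\ge 0$; this factor of $2$ exactly cancels the $\tfrac12$'s introduced by the dilation, so $\fronorm{\tilde X-\tilde VS\tilde V^\top}^2\ge\fronorm{X-U(-S)V^\top}^2$ while the inner products match exactly, and the constant $\tfrac{\delta}{2}$ survives unchanged. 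If you want to complete your write-up, proving that auxiliary Frobenius-norm inequality and running the dilation argument is the shortest path; salvaging the direct block argument would require a new idea for the off-diagonal energy bound.
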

	Equipped with this lemma, let us now prove  Theorem \ref{thm: LinearConvergence}.
	\begin{proof}[Proof of Theorem \ref{thm: LinearConvergence}]
		The case of the spectrahedron
		is proved in \cite[Theorem 3]{{DingFeiXuYang2020}}
		by using the eigengap
		formula in Lemma \ref{lem: positiveDelta} and
		\cite[Section 2.2 ``relation with the eigengap assumption'']{DingFeiXuYang2020}.
		Here, we need only to
		address the case of the unit nuclear norm.
		The proof that we present here for the case of the nuclear norm ball is quite similar.
		For notation convenience, for each $t$, let  $U_t,V_t$ be matrices
			formed by top $\rsol$ left and right singular vectors of $\nabla f(X_t)$.
		Define the set
		$
		\mathcal{N}_{\rsol,t} = \{U_tSV_t^\top \mid\nucnorm{S}\leq 1\}.
		$

		First note that Lemma \ref{lem: positiveDelta} shows that
		$\delta>0$.  Next, using the Lipschitz
		smoothness of $f$, we have for any $t\geq 1$,
		$\eta\in[0,1]$, and any $W\in \mathcal{N}_{\rsol,t}$:
		\begin{equation} \label{eq: QuadraticApproximationDueTosmoothness}
		\begin{aligned}
		f(X_{t+1})  \leq &f(X_{t})+(1-\eta)\inprod{W-X_{t}}{\nabla f(X_{t})}
		\\ &  +\frac{(1-\eta)^{2}L_f}{2}\fronorm{W-X_{t}}^{2}.
		\end{aligned}
		\end{equation}
		For $t\geq T$, we find that $\fronorm{X_t-\Xsol}\leq \frac{\delta}{6\sqrt{2}L_f}$, and
		\begin{equation} \label{eq:gapdifference}\
		\begin{aligned}
		&\sigma_{\rsol}(\nabla f(X_{t}))-\sigma_{\rsol+1}(\nabla f(X_t)) \\
		=& \underbrace{\sigma_{\rsol}\left(\nabla f(\Xsol)\right)-\sigma_{\rsol+1}(\nabla f(\Xsol))}_{\overset{(a)}{=}-\delta}
		+\underbrace{\left(\sigma_{\rsol}(\nabla f(X_{t}))-\sigma_{\rsol+1}\left(\nabla f(\Xsol)\right)\right)}_{\overset{(b)}{\leq}\frac{1}{3}\delta}\nonumber\\
		&+\underbrace{\left(\lambda_{n-\rsol+1}\left(\nabla f(\Xsol)\right)-\lambda_{n-\rsol+1}(\nabla f(X_{t}))\right)}_{\overset{(c)}{\leq}\frac{1}{3}\delta}\\
		\leq&-\frac{1}{3}\delta.\nonumber
		\end{aligned}
		\end{equation}
		Here in step $(a)$, we use the singular value gap formula of $\delta$ in Lemma \ref{lem: positiveDelta}. Step $(b)$ and $(c)$ are due
		to Weyl's inequality, the Lipschitz continutity of $\nabla f$, and the inequality $\fronorm{X_t-\Xsol}\leq \frac{\delta}{6\sqrt{2}L_f}$.

		Now we subtract the inequality \eqref{eq: QuadraticApproximationDueTosmoothness} both sides by $f(\Xsol)$, and denote
		$h_{t}=f(X_{t})-f(\Xsol)$ for each $t$ to arrive at
		\begin{equation}\label{eq:htht+1intermediatequantity}
		\begin{aligned}
		h_{t+1}\leq & h_{t}+(1-\eta)\underbrace{\inprod{W-X_{t}}{\nabla f(X_{t})}}_{R_1}\\
		& +\frac{(1-\eta)^{2}L_f}{2}\underbrace{\fronorm{W-X_{t}}^2}_{R_2}.
		\end{aligned}
		\end{equation}
		Using Lemma \ref{lem: nucnormLemma}, the inequality \eqref{eq:gapdifference}, and the
		assumption $\Xsol\in \partial \Omega$,
		we can choose $W\in\mathcal{N}_{\rsol,t}$ such that
		\begin{align}
		\inprod{W-\Xsol}{\nabla f(X_{t})} & \leq-\frac{\delta}{6}\fronorm{\Xsol-W}^{2}.\label{eq:intermediateinequality2forlinearconvergence}
		\end{align}
		Let us now analyze the term $R_1=\inprod{W-X_{t}}{\nabla f(X_{t})}$
		using (\ref{eq:intermediateinequality2forlinearconvergence}) and convexity of $f$:
		\begin{equation*}
		\begin{aligned} \label{eq:inprodw-xtalgorithmtheorem}
		R_1=   & \inprod{W-X_{t}}{\nabla f(X_{t})}\\
		= & \inprod{W-\Xsol}{\nabla f(X_{t})}+\inprod{\Xsol-X_{t}}{\nabla f(X_{t})}\nonumber \\
		\leq&-\frac{\delta}{6}\fronorm{\Xsol-W}^{2}-h_{t}.
		\end{aligned}
		\end{equation*}
		The term $R_2=\fronorm{X_{t}-W}^{2}$ can be bounded by
		\begin{equation*}
		\begin{aligned}
		R_2 = \fronorm{X_{t}-W}^{2} &\overset{(a)}{\leq}2\left(\fronorm{X_{t}-\Xsol}^{2}+\fronorm{\Xsol-W}^{2}\right)\\
		&\overset{(b)}{\leq}\frac{2}{\gamma}h_{t}+2\fronorm{\Xsol-W}^{2},\label{eq:xt-walgorithminequality}
		\end{aligned}
		\end{equation*}
		where we use the triangle inequality and the basic inequality $(a+b)^{2}\leq2a^{2}+2b^{2}$
		in step $(a)$, and the quadratic growth condition in step $(b)$.

		Now combining (\ref{eq:htht+1intermediatequantity}), and the bounds of $R_1$ and $R_2$, we reach that there is a
		$W\in\mathcal{N}_{\rsol,t}$ such that for any $\xi=1-\eta\in[0,1]$, we have
		\begin{align*}
		h_{t+1}
		\leq & h_{t}+\xi\left(-\frac{\delta}{6}\fronorm{\Xsol-W}^{2}-h_{t}\right)
		+\frac{\xi^{2}L_f}{2}\left(\frac{2}{\gamma}h_{t}+2\fronorm{\Xsol-W}^{2}\right)\\
		=&\left(1-\xi+\frac{\xi^{2}L_f}{\gamma}\right)h_{t}
		+\left(\xi^{2}L_f-\frac{\xi\delta}{6}\right)\fronorm{\Xsol-W}^{2}.
		\end{align*}
		A detailed calculation below and a careful choice of $\xi$ below yields the factor
		$1-\min \{\frac{\gamma}{4L_f},\frac{\delta}{12L_f}\}$ in the theorem.

        We show here how to choose $\xi\in[0,1]$ so that  $1-\xi+\frac{\xi^{2}L_f}{\gamma}$ is minimized while
		keeping $\xi^{2}L_f-\frac{\xi\delta}{6}\leq0$. For
		$\xi^{2}L_f-\frac{\xi\delta}{6}\leq0$, we need
		$\xi\le\frac{\delta}{6L_f}$. The function $q(\xi)=1-\xi+\frac{\xi^{2}L_f}{\gamma}$
		is decreasing for $\xi\leq\frac{\gamma}{2L_f}$ and increasing for
		$\xi\geq\frac{\gamma}{2L_f}$. If $\frac{\gamma}{2L_f}\leq\frac{\delta}{6L_f}$,
		then we can pick $\xi=\frac{\gamma}{2L_f}$, and $q(\xi)=1-\frac{\gamma}{4L_f}$.
		If $\frac{\gamma}{2L_f}\geq\frac{\delta}{6L_f}\implies\frac{\delta}{\gamma}\leq3$,
		then we can pick $\xi=\frac{\delta}{6L_f}$, and
		$q(\xi)=1-\frac{\delta}{6L_f}+\frac{\delta^2}{36\gamma L_f}=1+\frac{\delta}{6L_f}\left(\frac{\delta}{6\gamma}-1\right)\leq1-\frac{\delta}{12L_f}$.
	\end{proof}

	\subsubsection{Additional lemmas for the proof of  Theorem \ref{thm: LinearConvergence}}\label{sec: nucnormLemma}
	Here we give a proof of Lemma \ref{lem: nucnormLemma}.
	\begin{proof}[Proof of Lemma \ref{lem: nucnormLemma}]
		We utilize the result in \cite[Lemma 5]{DingFeiXuYang2020}:
		given any $\tilde{Y}\in \symMat^{\dm}$ with eigenvalues
		$\lambda_n(\tilde{Y})\leq \dots \leq \lambda_{n-r+1}(\tilde{Y})\leq \lambda_{n-r}(\tilde{Y})-\delta'\leq \dots \leq \lambda_1(\tilde{Y})-\delta'$ for some
		$\delta'>0$.
		Denote the matrices by the bottom $r$ eigenvectors of $\tilde{Y}$
		as $\tilde{V}\in \real^{\dm_2\times r}$ respectively.
		Then for any $\tilde{X}\in \symMat^{\dm}_+$ with $\tr(\tilde{X})=1$,
		there is an $S\in \real^{r\times r}_+$ with $\tr(S)=1$ such that
		\begin{equation}\label{eq: lemma5fromMyPreviousPaper}
		\inprod{\tilde{X}-\tilde{V}S\tilde{V}^\top}{\tilde{Y}}\geq \frac{\delta'}{2}\fronorm{\tilde{X}-\tilde{V}S\tilde{V}^\top}^2.
		\end{equation}
		To utilize this result, we consider the dilation of the matrices $X$ and $Y$:
		\begin{equation}
		\tilde{X} :\,= \frac{1}{2}\begin{bmatrix}
		X_1 & X \\
		X^\top & X_2
		\end{bmatrix},\quad \text{and} \quad
		\tilde{Y} :\,=\begin{bmatrix}
		0 & Y \\
		Y^\top & 0
		\end{bmatrix}.
		\end{equation}
		Here the matrices $X_1= U_X \Sigma_XU_X$,
		$X_2 = V_X\Sigma_X V_X^\top$ where
		$U_X\Sigma_XV_X$ is the SVD of $X$ and the number
		$r_X = \rank(X)$.
		Since $\tilde{X}= \begin{bmatrix}
		U_X \\V_X
		\end{bmatrix}\Sigma _X [U_X^\top\; V_X^\top]$, the matrix $\tilde{X}\in \symMat_+^{\dm_1+\dm_2}\succeq 0$.
		The trace of $\tilde{X}$ is $\tr(\tilde{X})=1$ as $\nucnorm{X}=1$.
		Note that the bottom $r+1$ eigenvalues of $\tilde{Y}$ is
		simply $-\sigma_{1}(Y),\dots,-\sigma_r(Y),-\sigma_{r+1}(Y)$, and the matrix
		$\tilde{V}\in \real^{(\dm_1+\dm_2)r}$ defined below is formed by
		the matrix eigenvectors corresponds the smallest $r$ eigenvalues:
		\begin{equation}
		\tilde{V} :\,=\frac{1}{\sqrt{2}} \begin{bmatrix}
		U \\
		-V
		\end{bmatrix}.
		\end{equation}
		Using  \cite[Lemma 5]{DingFeiXuYang2020}, we can find a matrix $S\in \symMat_+^r$ with $\tr(S)=1$ such that
		\eqref{eq: lemma5fromMyPreviousPaper} holds. Writing the equation in block form reveals that
		\begin{equation}
		\inprod{X-U(-S)V^\top}{Y}=\inprod{\tilde{X}-\tilde{V}S\tilde{V}^\top}{\tilde{Y}}\geq \frac{\delta}{2} \fronorm{\tilde{X}-\tilde{V}S\tilde{V}^\top}^2
		\overset{(a)}{\geq} \frac{\delta}{2}\fronorm{X-U(-S)V^\top}^2,
		\end{equation}
		where the last step is due to Lemma \ref{lem: svdfronormdifference}.
		Note that the matrix $U(-S)V^\top$ is the matrix we seek as $\nucnorm{-S}=\tr(S)=1$. Hence the proof is completed.
	\end{proof}
	\begin{lem}\label{lem: svdfronormdifference}
		Suppose two matrices $X,Y\in \real^{\dm_1\times \dm_2}$, and $X= U_1S_1V_1^\top$ and $Y = U_2S_2V_2^\top$ for some
		unitary $U_i,V_i$ that for some integers $r_1,r_2$, they satisfy $U_i\in \real^{\dm_1\times r_{i}}$, $i=1,\,2$ and $V_i \in \real^{\dm_2\times r_i}$, $i=1,\,2$.
		The matrices $S_i\in\symMat_+^{r_i}$ are positive semidefinite. Then
		\[
		\fronorm{U_1S_1U_1^\top -U_2S_2U_2^\top}^2 + \fronorm{V_1S_1V_1^\top-V_2S_2V_2^\top}^2 \geq
		2\fronorm{U_1S_1V_1^\top - U_2S_2V_2^\top}^2.
		\]
	\end{lem}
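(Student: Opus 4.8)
To establish the stated inequality
\[
\fronorm{U_1S_1U_1^\top - U_2S_2U_2^\top}^2 + \fronorm{V_1S_1V_1^\top - V_2S_2V_2^\top}^2 \ge 2\fronorm{U_1S_1V_1^\top - U_2S_2V_2^\top}^2 ,
\]
the plan is to expand all three squared norms and reduce the claim to the sign of a single trace. Write $M_1 = U_1S_1U_1^\top - U_2S_2U_2^\top$, $M_2 = V_1S_1V_1^\top - V_2S_2V_2^\top$, and $M_0 = U_1S_1V_1^\top - U_2S_2V_2^\top$. Because the columns of each $U_i$ and each $V_i$ are orthonormal, $U_i^\top U_i = I$ and $V_i^\top V_i = I$, so every pure term collapses to $\fronorm{U_iS_iU_i^\top}^2 = \fronorm{V_iS_iV_i^\top}^2 = \fronorm{U_iS_iV_i^\top}^2 = \fronorm{S_i}^2$. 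Hence the diagonal contributions on the two sides match exactly, and the difference of the two sides reduces to a combination of cross terms only:
\[
\left(\fronorm{M_1}^2 + \fronorm{M_2}^2\right) - 2\fronorm{M_0}^2 = -2\left(\inprod{U_1S_1U_1^\top}{U_2S_2U_2^\top} + \inprod{V_1S_1V_1^\top}{V_2S_2V_2^\top} - 2\inprod{U_1S_1V_1^\top}{U_2S_2V_2^\top}\right).
\]

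Next I would turn each cross inner product into a trace against $S_1$ and $S_2$. Introduce $P := U_1^\top U_2 \in \real^{r_1 \times r_2}$ and $Q := V_1^\top V_2 \in \real^{r_1 \times r_2}$. Cyclicity of the trace gives $\inprod{U_1S_1U_1^\top}{U_2S_2U_2^\top} = \tr(S_1 P S_2 P^\top)$ and $\inprod{V_1S_1V_1^\top}{V_2S_2V_2^\top} = \tr(S_1 Q S_2 Q^\top)$, while $\inprod{U_1S_1V_1^\top}{U_2S_2V_2^\top} = \tr(S_1 P S_2 Q^\top)$; using that $S_1,S_2$ are symmetric, this last quantity also equals $\tr(S_1 Q S_2 P^\top)$. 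Substituting these three identities, the bracketed combination telescopes into a single perfect-square-type trace,
\[
\inprod{U_1S_1U_1^\top}{U_2S_2U_2^\top} + \inprod{V_1S_1V_1^\top}{V_2S_2V_2^\top} - 2\inprod{U_1S_1V_1^\top}{U_2S_2V_2^\top} = \tr\!\left(S_1(P-Q)S_2(P-Q)^\top\right),
\]
so the entire statement hinges on the sign of $\tr(S_1(P-Q)S_2(P-Q)^\top)$.

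The step I expect to be the main obstacle is pinning down the sign of this trace, since this one quantity decides the direction of the whole inequality. Using positive semidefiniteness, factor $S_1 = L_1 L_1^\top$ and $S_2 = L_2 L_2^\top$; cyclicity of the trace then gives $\tr(S_1(P-Q)S_2(P-Q)^\top) = \fronorm{L_1^\top (P-Q) L_2}^2 \ge 0$. This identification is the crux: orthonormality of the columns together with positivity of $S_1,S_2$ pins the cross term down to a genuine squared Frobenius norm, so it is exactly here that the argument must be handled with the greatest care. Driving the bound in the claimed direction is therefore the delicate point, and it is where any additional structure linking the two factorizations — beyond orthonormality and PSD-ness alone, such as the sign-coupling $U_2=UW$, $V_2=-VW$, $S_2=\Lambda$ that arises from the intended SVD/dilation application in Lemma~\ref{lem: nucnormLemma} — would have to be invoked to complete the proof.
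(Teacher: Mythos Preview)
Your expansion and trace manipulation are exactly the route the paper takes: both reduce the difference of the two sides to the single quantity $\tr\!\big(S_1(P-Q)S_2(P-Q)^\top\big)$ with $P=U_1^\top U_2$, $Q=V_1^\top V_2$, and both then use the PSD square roots to recognize this as $\fronorm{S_1^{1/2}(P-Q)S_2^{1/2}}^2\ge 0$. The difference is that your sign bookkeeping is correct and the paper's is not: you obtain
\[
\fronorm{M_1}^2+\fronorm{M_2}^2-2\fronorm{M_0}^2 \;=\; -\,2\,\tr\!\big(S_1(P-Q)S_2(P-Q)^\top\big)\;\le\;0,
\]
whereas the paper's first displayed expansion silently drops a minus sign on the cross terms and therefore lands on $+2\tr(\cdots)\ge 0$.

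Your instinct that ``the claimed direction is the delicate point'' is right, but the fix you suggest --- importing the sign-coupling $U_2=UW$, $V_2=-VW$ from the dilation in Lemma~\ref{lem: nucnormLemma} --- cannot rescue the stated inequality: the trace is a squared Frobenius norm for \emph{every} choice of orthonormal $U_i,V_i$ and PSD $S_i$, so no specialization can make it negative. In fact the lemma as written is false. Take $\dm_1=\dm_2=2$, $r_1=r_2=1$, $U_1=U_2=V_1=e_1$, $V_2=e_2$, $S_1=S_2=1$: then the left side equals $0+2=2$ while the right side equals $2\cdot 2=4$. What your computation actually proves is the \emph{reverse} inequality. (For the downstream application this is not fatal: in the block identity $\fronorm{\tilde X-\tilde V S\tilde V^\top}^2=\tfrac14\big[\fronorm{X_1-USU^\top}^2+\fronorm{X_2-VSV^\top}^2\big]+\tfrac12\fronorm{X-U(-S)V^\top}^2$ one can simply drop the nonnegative diagonal contribution to get $\fronorm{\tilde X-\tilde V S\tilde V^\top}^2\ge \tfrac12\fronorm{X-U(-S)V^\top}^2$, which yields Lemma~\ref{lem: nucnormLemma} with $\delta/4$ in place of $\delta/2$ and only changes constants in Theorem~\ref{thm: LinearConvergence}.)
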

	\begin{proof}
		This result follows by direct computation. Consider the difference
		$\fronorm{U_1S_1U_1^\top -U_2S_2U_2^\top}^2 + \fronorm{V_1S_1V_1^\top-V_2S_2V_2^\top}^2 -
		2\fronorm{U_1S_1V_1^\top - U_2S_2V_2^\top}^2$. Expanding the square and using
		the orthogonal invariance of the Frobenius norm,
		we find that
		\begin{align*}
		&\fronorm{U_1S_1U_1^\top -U_2S_2U_2^\top}^2 + \fronorm{V_1S_1V_1^\top-V_2S_2V_2^\top}^2 -
		2\fronorm{U_1S_1V_1^\top - U_2S_2V_2^\top}^2\\
		=&2\tr(S_1U_1^\top U_2S_2(U_2^\top U_1-V_2^\top V_1)) +2\tr(S_1(V_1^\top V_2-U_1^\top U_2)S_2V_2^\top V_1)\\
		\overset{(a)}{=} &2\tr(S_1U_1^\top U_2S_2(U_2^\top U_1-V_2^\top V_1)) +2\tr(V_1^\top V_2S_2(V_2^\top V_1-U_2^\top U_1)S_1)\\
		\overset{(b)}{=} & 2\tr(S_1(U_1^\top U_2-V_1^\top V_2)S_2(U_2^\top U_1-V_2^\top V_1)),
		\end{align*}
		where  step $(a)$ is due to the fact that $\tr(A)=\tr(A^\top)$ and step $(b)$ is due to the cyclic property of
		trace. By factorizing $S_i = S_i^{\frac{1}{2}}$ for $i=1,2$ and the cyclic property of trace again, we find that
		\begin{align*}
		&\fronorm{U_1S_1U_1^\top -U_2S_2U_2^\top}^2 + \fronorm{V_1S_1V_1^\top-V_2S_2V_2^\top}^2 -
		2\fronorm{U_1S_1V_1^\top - U_2S_2V_2^\top}^2\\
		=& \tr(S_1^\frac{1}{2} (U_1^\top U_2-V_1^\top V_2)S_2^{\frac{1}{2}} S_2^{\frac{1}{2}} (U_2^\top U_1-V_2^\top V_1) S_1^{\frac{1}{2}})\\
		=& \fronorm{ S_2^{\frac{1}{2}} (U_2^\top U_1-V_2^\top V_1) S_1^{\frac{1}{2}}}^2\geq 0.
		\end{align*}
		Hence the lemma is proved.
	\end{proof}
	\section{Extension for multiple solutions} \label{sec: uniqueness}
	When the problem has more than one solution, let $\mathcal{X}$ be the solution set.
	This set is convex and closed.
	We change the term $\norm{x-\xsol}$ in the quadratic growth condition to
	$\dist(x,\mathcal{X}) = \min_{\xsol \in \mathcal{X}}\norm{x-\xsol}$. For strict complementarity,
	we remove the condition that $\xsol$ is unique and
	demand instead that some $\xsol \in \mathcal{X}$ satisfies the
	conditions listed in strict complementarity.
	The support set $\mathcal{F}(\xsol)$ and complementary set
	$\mathcal{F}^c(\xsol)$ are defined via the $\xsol$ that satisfies strict complementarity.
	Note that the dual vector $\nabla f(\xsol)$ is the same for every $\xsol \in \mathcal{X}$ \cite[Proposition 1]{zhou2017unified}.
	The algorithmic results, Theorem \ref{thm: normalFWresult}, \ref{thm: nonexponentialFiniteConvergence}, and \ref{thm: LinearConvergence} hold
	almost without any change of the proof using the new definition of $\rsol$ and $\delta$.
	The argument to establish quadratic growth via strict complementarity
	is more tedious and we defer it to future work.

	\section{Numerical Experiment setting for Section \ref{sec: numerics}} \label{sec: numericalSectionSettingAppendix}
	We detail the experiment settings of Lasso, support vector machine (SVM), group Lasso, and matrix completion problems.
	The compared methods include FW, away-step FW (awayFW) \cite{guelat1986some}, pairwise FW (pairFW)\cite{lacoste2015global},
	DICG \cite{garber2016linear}, and blockFW \cite{allen2017linear}. All codes are written by MATLAB and performed on a MacBook Pro with Processor 2.3 GHz Intel Core i5 and Memory 8 GB 2133 MHz LPDDR3.
	In our k-FW, we solve the kDS by the FASTA toolbox \cite{GoldsteinStuderBaraniuk:2014,FASTA:2014}: \textit{https://github.com/tomgoldstein/fasta-matlab}. 
	In DICG (as well as FW, awayFW, pairFW in Group Lasso and SVM), the step size is determined by backtracking line search. The ball sizes of $\ell_1$ norm, group norm, and nuclear norm are set to be the ground truth respectively.

	\subsection{Lasso}
	The experiment is the same as that in \cite{lacoste2015global} except that the data size in our setting is ten times of that in \cite{lacoste2015global}: $A\in\mathbb{R}^{2000\times 5000}$ and $b\in\mathbb{R}^{2000}$. The large size is more reasonable for comparing the computational costs of FW, awayFW, pairFW, DICG and our k-FW. For FW, awayFW and pairFW, we use the MATLAB codes provide by  \cite{lacoste2015global}: \url{https://github.com/Simon-Lacoste-Julien/linearFW}. In DICG (as well as FW, awayFW, pairFW in Group Lasso and SVM), the step size is determined by backtracking line search.

	\subsection{SVM}
	We generate the synthetic data for two-class classification by the following model
	\begin{equation*}
	X=[X_1\ \ X_2]=[U_1V_1+1\ \ U_2V_2-1], \quad X\leftarrow X+E,
	\end{equation*}
	where the elements of $U_1\in\mathbb{R}^{20\times 5}$, $V_1\in\mathbb{R}^{5\times 500}$, $U_2\in\mathbb{R}^{20\times 5}$, and $V_2\in\mathbb{R}^{5\times 500}$ are drawn from $\mathcal{N}(0,1)$. $E$ consists of noise drawn from $\mathcal{N}(0,0.1\sigma_X)$, where $\sigma_X$ denotes the standard deviation of the entries of $X$. Thus, in $X$, the number of samples is 1000 and the number of features is 20. We use $80\%$ of the data as training data to classify the remaining data. In SVM, we use a polynomial kernel $k(x,y)=(x^\top y+1)^2$.

	\subsection{Group Lasso}
	We generate a $100\times 1000$ matrix $X$ whose entries are drawn from $\mathcal{N}(0,1)$ and a $10\times 100$ matrix $W$ with $10$ nonzero columns drawn from $\mathcal{N}(0,1)$. Then let $Y=WX$ and set $Y\leftarrow Y+E$, where the entries of noise matrix $E$ are drawn from $\mathcal{N}(0,0.01\sigma_Y)$. Then we estimate $W$ from $Y$ and $X$ by solving a Group Lasso problem with $k$FW.
	\subsection{Matrix Completion}
	We generate a low-rank matrix as $X=UV^\top$, where the entries of $U\in\mathbb{R}^{500\times 5}$ and $V\in\mathbb{R}^{5\times 500}$ are drawn from $\mathcal{N}(0,1)$. We sample $50\%$ of the entries uniformly at random and recover the unknown entries by low-rank matrix completion.

	\subsection{Objective function vs running time}
	See Figure \ref{fig: Figure_compare_objtime}. $k$FW uses considerably less time compared to other FW variants for Lasso, SVM, and Group Lasso problems.
	It takes longer time than blockFW for the matrix completion problem.
	\begin{figure}
		\hspace{-15pt}
		\begin{subfigure}[(a)]{.24\textwidth}
			\centering
			\includegraphics[width=1.1\linewidth]{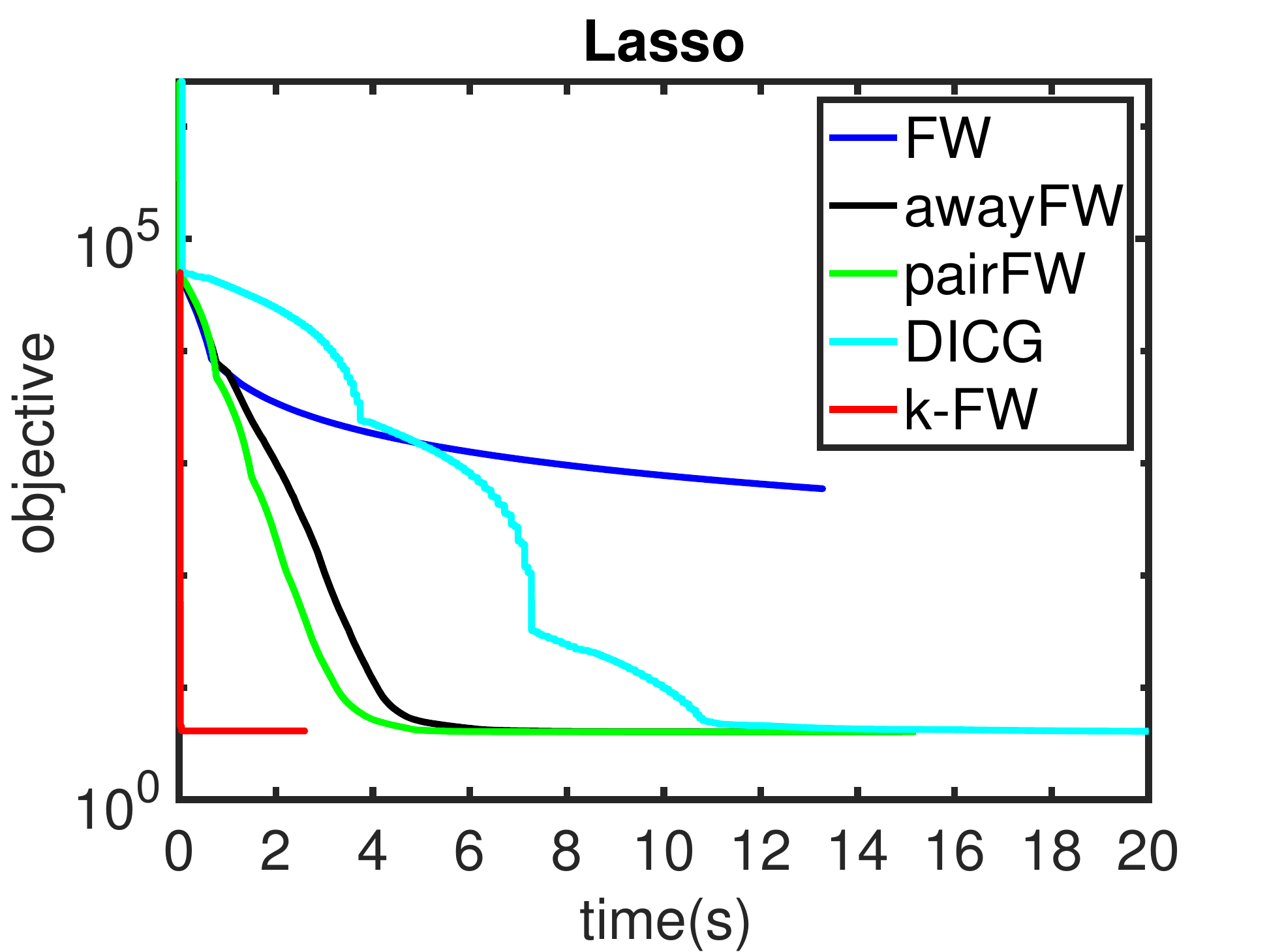}
			\label{fig:fig_lassotime}
		\end{subfigure}%
		\hspace{5pt}
		\begin{subfigure}[(b)]{.24\textwidth}
			\centering
			\includegraphics[width=1.1\linewidth]{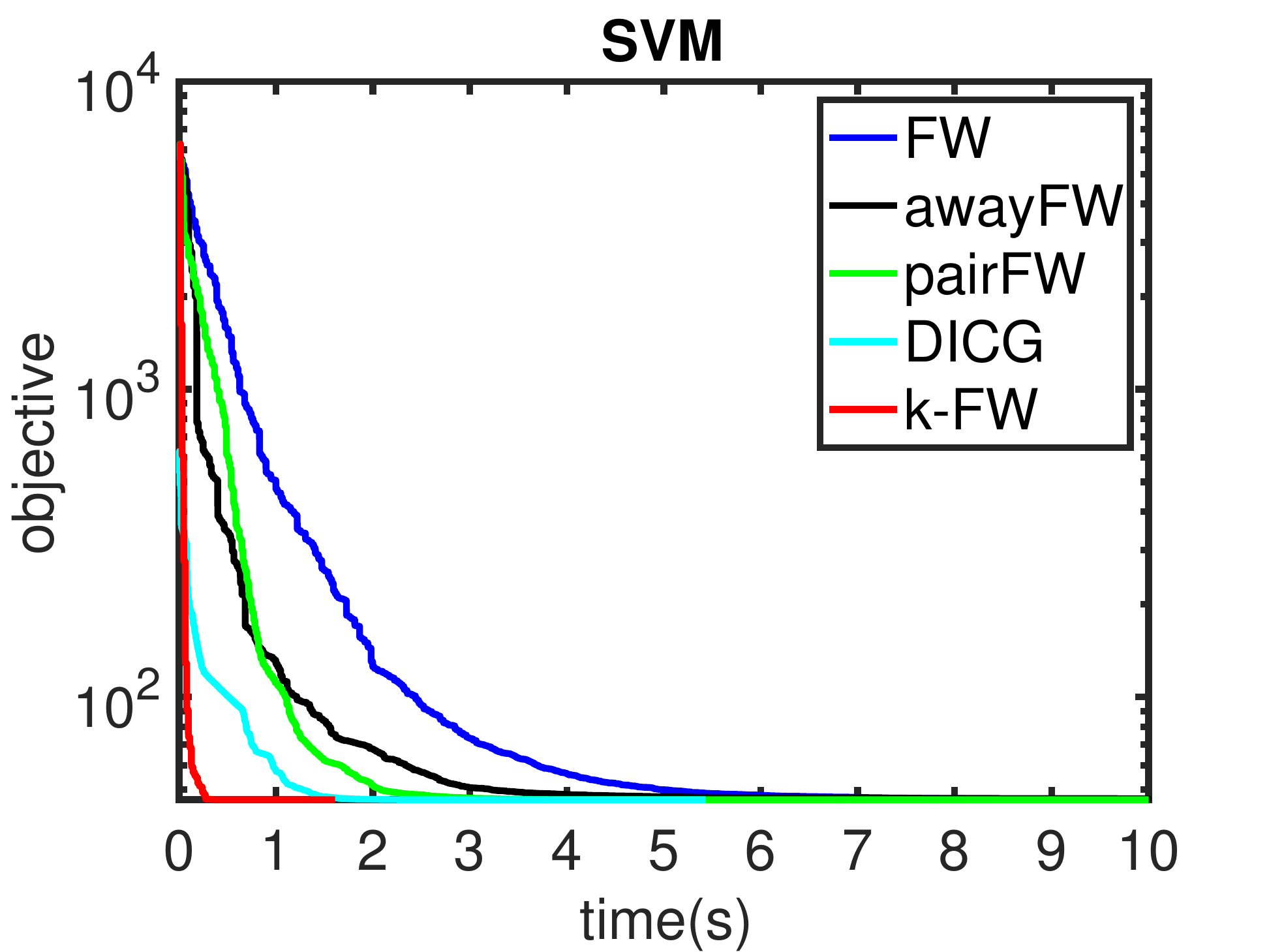}
			\label{fig:fig_SVMtime}
		\end{subfigure}
		\hspace{3pt}
		\begin{subfigure}[(b)]{.24\textwidth}
			\includegraphics[width=1.1\linewidth]{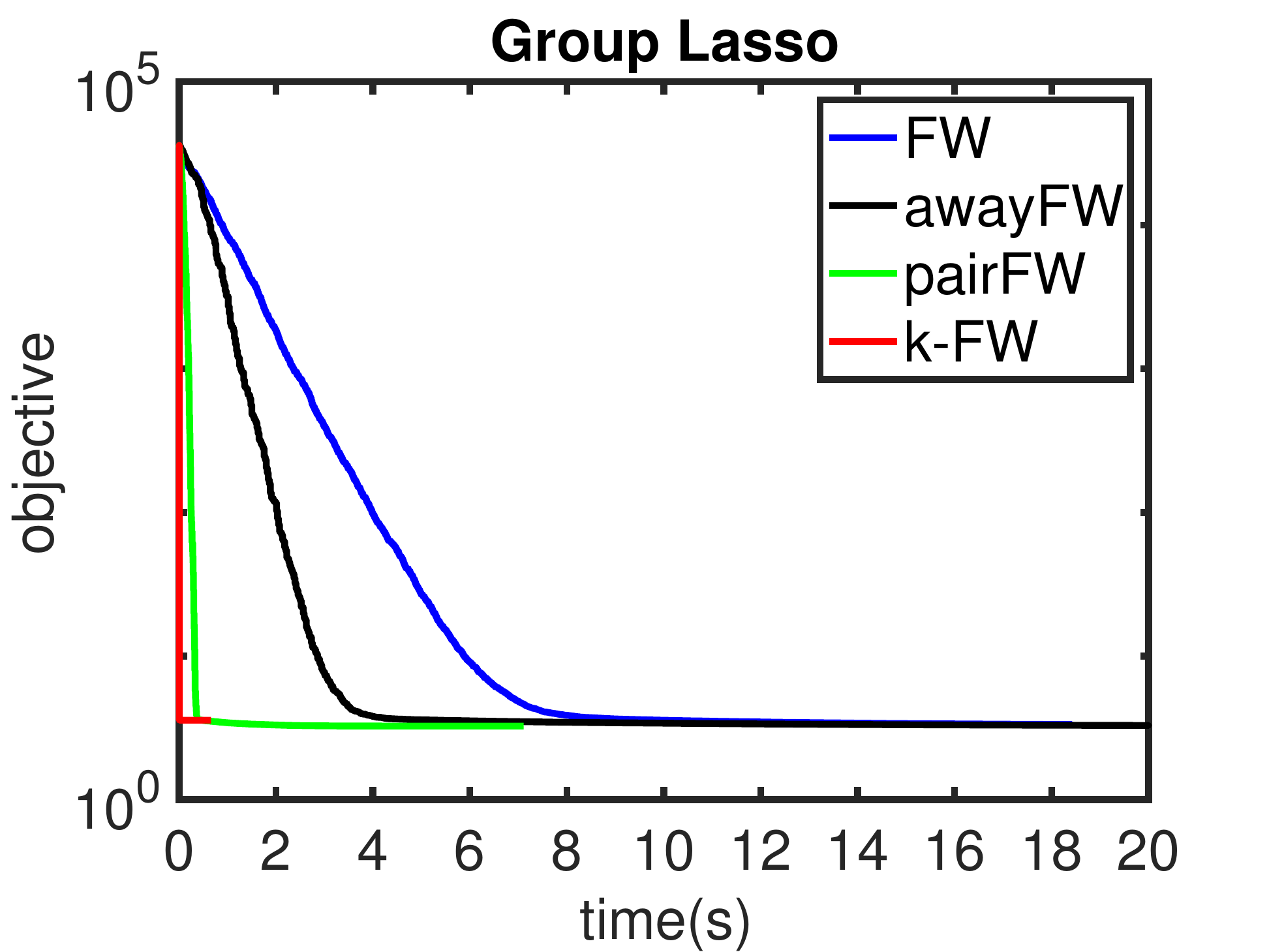}
			\label{fig:fig_grouplassoTime}
		\end{subfigure}
		\hspace{3pt}
		\begin{subfigure}[(b)]{.24\textwidth}
			\centering
			\includegraphics[width=1.1\linewidth]{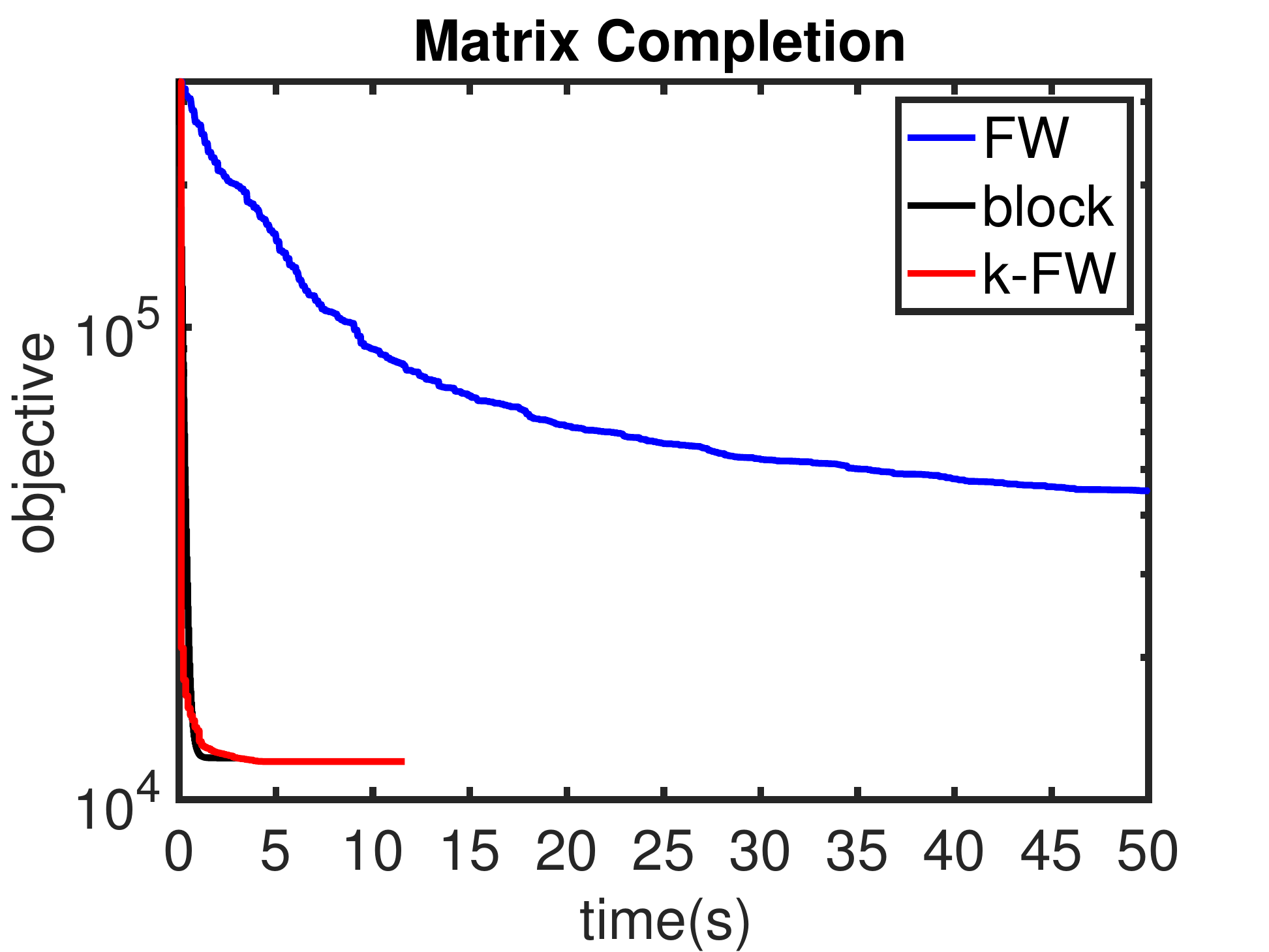}
			\label{fig:fig_MCtime}
		\end{subfigure}
		\vspace{-15pt}
		\caption{Objective against time cost}\label{fig: Figure_compare_objtime}
		\vspace{-10pt}
	\end{figure}

\end{document}